\newcommand{\tableau}[1]{\mbox{\tiny$\ytableaushort{#1}$}}
\newcommand{\diagram}[1]{\mbox{\tiny$\ydiagram{#1}$}}
\newcommand{\tableaul}[1]{\mbox{\tiny $\begin{ytableau} #1 \end{ytableau}$}}
\newcommand{\orbitofX}{\mathfrak{Z}}
\newcommand{\version}{Ver.~0.0}
\newcommand{\setversion}[1]{\renewcommand{\version}{Ver.~{#1}}}
\newtheorem*{conclusion}{Conclusion}
\newtheorem*{program}{Program}
\newtheorem{complement-}{Complement}[section]
\theoremstyle{definition}
\newtheorem*{notation}{Notation}
\numberwithin{equation}{section}
\newcommand{\RSl}{\mathrm{RS}_1}
\newcommand{\RSr}{\mathrm{RS}_2}
\newcommand{\RS}{\mathrm{RS}}
\newcommand{\rinsert}{\mathrm{RowInsert}}
\newcommand{\cinsert}{\mathrm{ColumnInsert}}
\theoremstyle{plain}
\newtheorem{theorem}{Theorem}[section]
\newtheorem{lemma}[theorem]{Lemma}
\newtheorem{proposition}[theorem]{Proposition}
\newtheorem{corollary}[theorem]{Corollary}
\newtheorem{assumption}[theorem]{Assumption}
\newtheorem{conjecture}[theorem]{Conjecture}
\theoremstyle{definition}
\newtheorem{example}[theorem]{Example}
\newtheorem{definition}[theorem]{Definition}
\newtheorem{remark}[theorem]{Remark}
\numberwithin{equation}{section}
\newcommand{\C}{\mathbb{C}}
\newcommand{\lie}[1]{\mathfrak{#1}}
\newcounter{thmenum}
\newenvironment{thmenumerate}{%
\begin{list}{$(\thethmenum)$}{%
\usecounter{thmenum}
\setlength{\labelsep}{.5em}
\setlength{\labelwidth}{-7pt}
\setlength{\topsep}{0pt}
\setlength{\partopsep}{0pt}
\setlength{\parsep}{0pt}
\setlength{\leftmargin}{3pt}
\setlength{\rightmargin}{0pt}
\setlength{\itemindent}{\leftmargin}
\setlength{\itemsep}{0pt}
}}
{\end{list}}
\newenvironment{thmenumeralph}{%
\begin{list}{{\rm (\alph{thmenum})}}{%
\usecounter{thmenum}
\setlength{\labelsep}{.5em}
\setlength{\labelwidth}{-7pt}
\setlength{\topsep}{0pt}
\setlength{\partopsep}{0pt}
\setlength{\parsep}{0pt}
\setlength{\leftmargin}{3pt}
\setlength{\rightmargin}{0pt}
\setlength{\itemindent}{\leftmargin}
\setlength{\itemsep}{0pt}
}}
{\end{list}}
\newcommand{\mycomment}[1]{} % Nothing to do
\newcommand{\Stab}{\qopname\relax o{Stab}}
\newcommand{\Ind}{\qopname\relax o{Ind}}
\newcommand{\Aut}{\qopname\relax o{Aut}}
\newcommand{\sgn}{\qopname\relax o{sgn}}
\renewcommand{\Im}{\qopname\relax o{Im}}
\newcommand{\closure}[1]{\overline{#1}}
\newcommand{\calorbit}{\mathcal{O}}
\newcommand{\GL}{\mathrm{GL}}
\newcommand{\Sp}{\mathrm{Sp}}
\newcommand{\Mat}{\mathrm{M}}
\newcommand{\gl}{\lie{gl}}
\newcommand{\Grass}{\qopname\relax o{Gr}}
\newcommand{\mattwo}[4]{\Bigl(\begin{array}{@{\,}c@{\;\;}c@{\,}}{#1} & {#2} \\ {#3} & {#4} \end{array}\Bigr)}
\newcommand{\STab}{\mathrm{STab}}
\newcommand{\nilpotents}{\mathcal{N}}
\newcommand{\dblFV}{\mathfrak{X}}
\newcommand{\bborbit}{\mathbb{O}}
\newcommand{\conormalvar}{\mathcal{Y}}
\newcommand{\skipover}[1]{}
\newcommand{\shape}[1]{{\qopname\relax o{shape}}(#1)}
\newcommand{\Flag}{\mathscr{F}}
\newcommand{\Stmap}{\qopname\relax o{St}}
\newcommand{\Partition}{\mathscr{P}}
\newcommand{\partitionsof}[1]{\Partition({#1})}
\newcommand{\signpartitionsof}[1]{\Partition^{\pm}({#1})}
\newcommand{\permutationsof}[1]{\mathfrak{S}_{#1}}
\newcommand{\ppermutationsof}[1]{\mathfrak{T}_{#1}}
\newcommand{\numberofboxes}[2]{\# #1_{\leq #2}}
\newcommand{\numberofsigns}[3]{\# #1_{\leq #2}(#3)}
\newcommand{\gerX}{\mathfrak{X}}
\newcommand{\frakorbit}{\mathfrak{O}}
\newcommand{\nilpotentsg}{\nilpotents_{\lie{g}}}
\newcommand{\nilpotentsof}[1]{\nilpotents_{\lie{#1}}}
\newlength{\dhatheight}
\newcommand{\GLnC}{\GL_n}
\newcommand{\GLnnC}{\GL_{2n}}
\newcommand{\MatnC}{\Mat_n}
\newcommand{\Xik}{\Xi_{\lie{k}}}
\newcommand{\Xis}{\Xi_{\lie{s}}}
\newcommand{\Triplets}[1]{\Upsilon_{#1}}
\newcommand{\wconjn}[2]{{}^{#1}\lie{#2}}
\newcommand{\conormbImk}[1]{\mathfrak{V}({#1})}
\newcommand{\conormbIms}[1]{\mathfrak{W}({#1})}
\begin{document}

\title[A generalization of Steinberg theory and an exotic moment map]{A generalization of Steinberg theory \\ and an exotic moment map}
\author{Lucas Fresse}\thanks{L. F. is supported in part by the ANR project GeoLie ANR-15-CE40-0012.}
\address{Universit\'e de Lorraine, CNRS, Institut \'Elie Cartan de Lorraine, UMR 7502, Vandoeu\-vre-l\`es-Nancy, F-54506, France}
\email{lucas.fresse@univ-lorraine.fr}
\author{Kyo Nishiyama}\thanks{K.~N.~is supported by JSPS KAKENHI Grant Number \#{16K05070}.}
\address{Department of Physics and Mathematics, Aoyama Gakuin University, Fuchinobe 5-10-1, Sagamihara 229-8558, Japan}
\email{kyo@gem.aoyama.ac.jp}
%%\date{Compiled \today, Version: \version}

\begin{abstract}
For a reductive group $G$, Steinberg established a map from the Weyl
group to the set of nilpotent $G$-orbits by using moment maps on
double flag varieties. In particular, in the case of the general
linear group, it provides a geometric interpretation
of the Robinson-Schensted correspondence between permutations and
pairs of standard tableaux of the same shape.

We extend Steinberg's approach to the case of a symmetric pair $(G, K)$ to obtain two different maps, 
namely a \emph{generalized Steinberg map} and 
an \emph{exotic moment map}.  

Although the framework is general, in this paper 
we focus on the pair 
$(G,K) = (\mathrm{GL}_{2n}(\C), \mathrm{GL}_n(\C)
\times \mathrm{GL}_n(\C))$. 
Then the generalized Steinberg map is a map from \emph{partial} permutations to 
the pairs of nilpotent orbits in $ \lie{gl}_n(\C) $.  
It involves a generalization of the classical Robinson-Schensted correspondence to the case of partial permutations. 

The other map, the exotic moment map, establishes a combinatorial map from the set of partial permutations
to that of signed Young diagrams, i.e., the set of nilpotent $ K$-orbits in the Cartan space
$(\mathrm{Lie}(G)/\mathrm{Lie}(K))^* $.

We explain the geometric background of the theory and 
combinatorial algorithms which produce the above mentioned maps.
\end{abstract}

\keywords{Steinberg variety; conormal bundle; exotic moment map; nilpotent orbits;
double flag variety; Robinson-Schensted correspondence; partial permutations}
\subjclass[2010]{14M15 (primary); 17B08, 53C35, 05A15 (secondary)}

\maketitle

\section{Introduction}

\subsection{}

\label{section-1.1}

Let $G$ be a connected reductive algebraic group over $\C$. Let $T\subset G$ be a maximal torus and let $B\subset G$ be a Borel subgroup containing $T$. 
In \cite{Steinberg-1976} (see also \cite{Hinich-Joseph}),  
Steinberg gave a geometric correspondence between the Weyl group $W:=W(G,T)$ and the nilpotent orbits in the Lie algebra $\mathfrak{g}:=\mathrm{Lie}(G)$ in terms of the flag variety $\mathcal{B}:=G/B$ 
(or rather the product $ \mathcal{B} \times \mathcal{B} $). Let us review it shortly.

The action of $G$ on $\mathcal{B}$ gives rise to a Hamiltonian action on the cotangent bundle 
$ T^*\mathcal{B} $, which is a symplectic variety.  
Thus we get a $ G $-equivariant moment map $\mu_\mathcal{B}:T^*\mathcal{B}\to\mathfrak{g}^*\cong\mathfrak{g}$ (see, e.g., \cite{Chriss-Ginzburg}).
Hereafter we identify $\mathfrak{g}$ with $\mathfrak{g}^*$ via a fixed nondegenerate invariant bilinear form on $\mathfrak{g}$. 
In fact, the image of $\mu_\mathcal{B}$ is contained in the nilpotent cone $\mathcal{N}=\mathcal{N}_\mathfrak{g}$.

Let us consider the double flag variety $\mathcal{B}\times\mathcal{B}$ with the diagonal $ G $-action.  Then, again its cotangent bundle $T^*(\mathcal{B}\times\mathcal{B})$ is a Hamiltonian $ G $-variety, and we get a moment map $\mu_{\mathcal{B}\times\mathcal{B}}:T^*(\mathcal{B}\times\mathcal{B})\to\mathfrak{g}$.  
%%induced by the diagonal action of $G$ on $\mathcal{B}\times\mathcal{B}$:
Note that, by functoriality, 
$ \mu_{\mathcal{B}\times\mathcal{B}} $ is just a sum of two moment maps on each $ T^*\mathcal{B}$.
We denote the null fiber of the moment map by $ \mathcal{Y} = \mu_{\mathcal{B}\times\mathcal{B}}^{-1}(0) $, and call it a \emph{conormal variety} (or Steinberg variety).
We summarize the situation in the diagram below.
\[
\xymatrix{T^*(\mathcal{B}\times\mathcal{B}) \ar[r]^{\mathrm{pr}_2}\ar[d]^{\mathrm{pr}_1}\ar@{-->}[rd]^{\mu_{\mathcal{B}\times\mathcal{B}}} & T^*\mathcal{B} \ar[d]^{\mu_\mathcal{B}} \\
T^*\mathcal{B} \ar[r]^{\mu_\mathcal{B}} & \mathcal{N}&\hspace{-1.8cm}\subset\mathfrak{g}}
\qquad
\begin{array}[t]{l}
\mu_{\mathcal{B}\times\mathcal{B}}=\mu_\mathcal{B}\circ\mathrm{pr}_1+\mu_\mathcal{B}\circ\mathrm{pr}_2, \\[5mm]
\mathcal{Y}=\mu_{\mathcal{B}\times\mathcal{B}}^{-1}(0)=T^*\mathcal{B}\times_\mathcal{N}T^*\mathcal{B}.
\end{array}
\]
It is interesting to note
that $\mu_\mathcal{B}:T^*\mathcal{B}\to\mathcal{N}$ is a resolution of singularities (the {\em Springer resolution}), so the Steinberg variety is a fiber product of two copies of this resolution.

From general facts on conormal varieties (see Section \ref{newsection-3}), we conclude that
the conormal variety $\mathcal{Y}$ is equidimensional and its irreducible components are param\-etrized by the $G$-orbits in $\mathcal{B}\times\mathcal{B}$, which are in one-to-one correspondence with the Weyl group $W$ due to the Bruhat decomposition (just note that $(\mathcal{B}\times\mathcal{B})/G\cong B\backslash G/B$).  

The map $\pi_1:=\mu_\mathcal{B}\circ\mathrm{pr}_1$
maps every irreducible component of $\mathcal{Y}$ 
to an irreducible $ G $-stable closed subset in the nilpotent variety $ \nilpotentsg $
(using $\pi_2:=\mu_\mathcal{B}\circ\mathrm{pr}_2$ leads to the same result).  
Since $ \nilpotentsg $ has only finitely many orbits (due to the Jacobson--Morozov Lemma combined with Malcev's Theorem), 
this image is 
the closure of a single nilpotent
$G$-orbit. 
In this way, we finally obtain a map from the Weyl group to the set of nilpotent $G$-orbits:
\[\Stmap :W\to \mathcal{N}_\mathfrak{g}/G , \]
which we call the {\em Steinberg map}.

When $G=\mathrm{GL}_n(\C)$, the Weyl group $ W $ coincides with the symmetric group 
$\mathfrak{S}_n$
and the nilpotent orbits $ \nilpotentsg / G $ are classified in terms of the Jordan normal form, 
%%which corresponds to the datum of 
i.e., a partition $ \lambda \vdash n $.  
Let us denote the set of partitions of $ n $ by $ \partitionsof{n} $.  
Then the Steinberg map in this case gives 
a map from $\mathfrak{S}_n$ to $\partitionsof{n}$ 
%%(nilpotent orbits parametrized by partitions of $n$ through Jordan normal form) 
which yields a geometric interpretation of the Robinson-Schensted correspondence \cite{Steinberg2}.
A more detailed review of Steinberg's theory is given in Section \ref{section-Steinberg} below.

\subsection{}\label{section-1.2}

One of our main goals in this paper is to extend Steinberg's approach to the case of a symmetric pair $(G,K)$, where $K$ is the subgroup of fixed points of an involution $\theta\in\mathrm{Aut}(G)$.  
The subgroup $ K $ is called a symmetric subgroup, and it is automatically reductive.  
Such pairs $ (G, K) $ are completely classified 
and the quotient space $ G/K $ is called a symmetric space (see, e.g., 
\cite{Helgason.DG.1978}).
In what follows, we assume for simplicity that $K$ is connected (otherwise one should replace $K$ with its connected component of the identity). 
Note that $K$ is connected, e.g., if $G$ is semisimple and simply connected (see \cite[\S8]{Steinberg-1968}).

Let $P$ and $Q$ be parabolic subgroups of $ G $ and $ K $ respectively.
We define a {\em double flag variety} for the symmetric pair as a product of 
two flag varieties 
\[\mathfrak{X}:=K/Q\times G/P.\]
The group $K$ acts diagonally on $\mathfrak{X}$.  
As in Section \ref{section-1.1}, the Hamiltonian action of $K$ on the cotangent bundle $T^*\mathfrak{X}$ gives rise to a moment map $\mu_\mathfrak{X}:T^*\mathfrak{X}\to\mathfrak{k}^*\cong\mathfrak{k}$, where $\mathfrak{k}:=\mathrm{Lie}(K)=\mathfrak{g}^\theta$, and we define a conormal variety $\mathcal{Y}:=\mu_\mathfrak{X}^{-1}(0)$ as the null fiber of this moment map.  

In general there are infinitely many $ K $-orbits in $ \mathfrak{X} $, 
though in many interesting cases the number of orbits is finite. 
In this case, we say that $ \gerX $ is \emph{of finite type}. 
A number of examples of double flag varieties of finite type are given in \cite{Nishiyama-Ochiai};
in the case where $ P $ or $ Q $ is a Borel subgroup, full
classifications of double flag varieties of finite type are obtained in \cite{HNOO}.

The condition that $ \mathfrak{X} $ is of finite type implies a lot of good properties of 
the conormal variety $\mathcal{Y}$.

\begin{proposition}
Assume that
\begin{equation}
\label{1}
\mbox{$\mathfrak{X}$ has a finite number of $K$-orbits.}
\end{equation}
Then, the conormal variety $\mathcal{Y}$ is equidimensional and its irreducible components are in one-to-one correspondence with the $K$-orbits of $\mathfrak{X}$.\ More precisely, whenever $\mathbb{O}\subset\mathfrak{X}$ is a $K$-orbit, the closure of the conormal bundle $T^*_\mathbb{O}\mathfrak{X}$ is an irreducible component of $\mathcal{Y}$.
\end{proposition}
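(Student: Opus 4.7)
The plan is to identify the null fiber $\mathcal{Y}=\mu_{\mathfrak{X}}^{-1}(0)$ with the (finite) union of conormal bundles to the $K$-orbits on $\mathfrak{X}$, and then use the Lagrangian property of conormal bundles together with the finiteness hypothesis to extract the irreducible components. I would proceed in three steps.

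First, I would recall the standard pointwise description of the moment map: writing $\xi_x=\frac{d}{dt}\big|_{t=0}\exp(t\xi)\cdot x$ for the fundamental vector field of $\xi\in\mathfrak{k}$ at $x\in\mathfrak{X}$, the defining identity
\[\langle \mu_{\mathfrak{X}}(x,\alpha),\xi\rangle=\alpha(\xi_x)\qquad(\xi\in\mathfrak{k},\ (x,\alpha)\in T^*_x\mathfrak{X})\]
shows that $\mu_{\mathfrak{X}}(x,\alpha)=0$ if and only if $\alpha$ annihilates the image of the infinitesimal action $\mathfrak{k}\to T_x\mathfrak{X}$. Since this image equals $T_x\mathbb{O}$ for $\mathbb{O}$ the $K$-orbit through $x$, the fiber of $\mu_{\mathfrak{X}}$ over $0$ above $x$ is exactly the conormal space $(T_x\mathbb{O})^{\perp}\subset T^*_x\mathfrak{X}$.

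Second, decomposing $\mathfrak{X}=\bigsqcup_{i=1}^{r}\mathbb{O}_i$ as the disjoint union of its finitely many $K$-orbits (by assumption \eqref{1}) and running $x$ over $\mathfrak{X}$ in the previous computation yields the set-theoretic decomposition
\[\mathcal{Y}=\bigsqcup_{i=1}^{r}T^*_{\mathbb{O}_i}\mathfrak{X},\qquad\text{hence}\qquad\mathcal{Y}=\bigcup_{i=1}^{r}\overline{T^*_{\mathbb{O}_i}\mathfrak{X}}\]
as a closed subset of $T^*\mathfrak{X}$. Each conormal bundle $T^*_{\mathbb{O}_i}\mathfrak{X}$ is a vector bundle over the smooth irreducible variety $\mathbb{O}_i$ of rank $\dim\mathfrak{X}-\dim\mathbb{O}_i$, so it is smooth and irreducible of dimension exactly $\dim\mathfrak{X}$; equivalently it is a Lagrangian subvariety of the symplectic manifold $T^*\mathfrak{X}$.

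Third, I would conclude by a dimension-and-irreducibility argument. Each closure $\overline{T^*_{\mathbb{O}_i}\mathfrak{X}}$ is irreducible of dimension $\dim\mathfrak{X}$, so among this finite collection none can be properly contained in another. Therefore these closures are precisely the distinct irreducible components of $\mathcal{Y}$, and $\mathcal{Y}$ is equidimensional of pure dimension $\dim\mathfrak{X}$. The correspondence $\mathbb{O}\mapsto\overline{T^*_{\mathbb{O}}\mathfrak{X}}$ from $K$-orbits to irreducible components is manifestly bijective by construction. The only genuine input is the pointwise moment-map formula in the first step together with the Lagrangian/irreducibility property of conormal bundles in the smooth category; given these, there is no real obstacle, and the whole argument hinges crucially on the finiteness assumption~\eqref{1} to pass from a countable to a finite irredundant union.
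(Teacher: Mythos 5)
Your proof is correct and follows essentially the same route as the paper's: identify $\mathcal{Y}$ pointwise with the conormal spaces to orbits via the formula $\langle\mu_{\mathfrak{X}}(x,\alpha),\xi\rangle=\alpha(\xi_x)$, decompose $\mathcal{Y}$ as the finite union of conormal bundles $T^*_{\mathbb{O}_i}\mathfrak{X}$, and conclude by the Lagrangian dimension count. This is precisely the content of the paper's Proposition~\ref{P1} and its proof.
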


%%This result is a relatively well-known (and general) fact;
For the proof, see Proposition \ref{P1}.

\begin{assumption}
Hereafter we assume condition \eqref{1}, 
i.e., we assume that the double flag variety $ \mathfrak{X} = K/Q \times G/P $ is of finite type.
\end{assumption}

A clear difference of the present situation from that of 
Steinberg's theory is
the lack of symmetry in the definition of the double flag variety $\mathfrak{X}$. 

Let us denote the Cartan decomposition by $ \lie{g} = \lie{k} \oplus \lie{s} $, 
where $ \lie{k} $ is the Lie algebra of $ K $ or equivalently the $ (+1) $-eigenspace of the involution $ \theta $ (we denote the differential of $ \theta \in \Aut(G) $ by the same letter) and $ \lie{s} $ is the $ (-1) $-eigenspace.  Note that $ \lie{s} $ is not a Lie algebra, but only a vector subspace isomorphic to the tangent space of $ G/K $ at the base point $ eK $.  
We summarize the situation in the diagram below.
\[
\xymatrix{
T^*\mathfrak{X} \ar[r]^{\mathrm{pr}_2}\ar[dd]^{\mathrm{pr}_1}\ar@{-->}[ddr]^{\mu_\mathfrak{X}} & T^*(G/P) \ar[d]^{\mu_{G/P}} \\
 & \mathfrak{g} \ar[d]^{(\cdot)^\theta}\ar[r]^{(\cdot)^{-\theta}} & \mathfrak{s} \\
T^*(K/Q) \ar[r]^{\mu_{K/Q}} & \mathfrak{k}
}
\qquad
\begin{array}[t]{l}
\mathfrak{g}=\mathfrak{k}\oplus\mathfrak{s}\ \mbox{(where $\mathfrak{s}=\mathfrak{g}^{-\theta}$)} \\[2mm]
\mbox{with projection maps $(\cdot)^\theta$ and $(\cdot)^{-\theta}$,} \\[2mm]
\mu_\mathfrak{X}=\mu_{K/Q}\circ\mathrm{pr}_1+(\cdot)^\theta\circ\mu_{G/P}\circ\mathrm{pr}_2, \\[2mm]
\mathcal{Y}=\mu_\mathfrak{X}^{-1}(0).
\end{array}
\]
In the above diagram, there appear two maps
\[
\pi_\mathfrak{k}:=\mu_{K/Q}\circ\mathrm{pr}_1:\mathcal{Y}\to\mathcal{N}_\mathfrak{k}\subset\mathfrak{k}
\quad
\text{and}
\quad
\pi_\mathfrak{g}:=\mu_{G/P}\circ\mathrm{pr}_2:\mathcal{Y}\to\mathcal{N}_\mathfrak{g}\subset\mathfrak{g}, 
\]
which do not play the same role.  
Actually $\pi_\mathfrak{k}=-(\cdot)^\theta\circ\pi_\mathfrak{g}$.  
We call $\pi_\mathfrak{k}$ the ``symmetrized moment map''.

Next
we define 
\[\pi_\mathfrak{s}:=(\cdot)^{-\theta}\circ \pi_\mathfrak{g}:\mathcal{Y}\to\mathfrak{s},\] 
where $ (\cdot)^{-\theta} $ denotes the projection to $ \lie{s} $ along the Cartan decomposition 
$ \lie{g} = \lie{k} \oplus \lie{s} $.  
We call $ \pi_\mathfrak{s} $ the ``exotic moment map''. 
Note that $\pi_\mathfrak{s}=\pi_\mathfrak{g}+\pi_\mathfrak{k}$.

In general the image of the exotic moment map $\pi_\mathfrak{s}$ is not necessarily 
contained in the nilpotent variety $\mathcal{N}_\mathfrak{s}:=\mathcal{N}_{\lie{g}}\cap\mathfrak{s}$.
\emph{Let us assume}:
\begin{equation}
\label{2} \mbox{the image of $\pi_\mathfrak{s}$ is contained in the nilpotent variety $\mathcal{N}_\mathfrak{s}$.}
\end{equation}

All the maps considered so far are $K$-equivariant.
Note that both nilpotent varieties $\mathcal{N}_\mathfrak{k}$ and $\mathcal{N}_\mathfrak{s}$ consist of finitely many $K$-orbits 
(see, e.g., \cite{Collingwood.McGovern.1993}).
Then for any irreducible component of $\mathcal{Y}$, its image by $\pi_\mathfrak{k}$ (resp. $\pi_\mathfrak{s}$) contains a dense nilpotent $K$-orbit.

\begin{conclusion}
Altogether, under assumptions \eqref{1} and \eqref{2}, we get two maps
\begin{equation}\label{3}
\Xik : \mathfrak{X}/K \xrightarrow{\quad} \mathcal{N}_\mathfrak{k}/K
\quad \text{ and } \quad 
\Xis : \mathfrak{X}/K \xrightarrow{\quad} \mathcal{N}_\mathfrak{s}/K.
\end{equation}
We also call $ \Xik $ a ``symmetrized moment map'' and 
$ \Xis $ an ``exotic moment map'' by abuse of terminology.
\end{conclusion}

These maps generalize the Steinberg map $W\cong (\mathcal{B}\times\mathcal{B})/G\to\mathcal{N}_\mathfrak{g}/G$ of Section \ref{section-1.1}, which is recovered by considering the particular case where $\theta=\mathrm{id}_G$ (so that $K=G$ and $\mathfrak{s}=0$) and $P=Q=B$.

\begin{remark}
It is interesting to note that the situation considered by Steinberg is also recovered if we consider the symmetric pair
$(G,K)=(G_1\times G_1,\Delta G_1)$, where $G_1$ is a connected reductive group and $\Delta G_1$ stands for the diagonal embedding of $G_1$ into $G_1\times G_1$,
and we put $ P = B_1 \times G_1 $ and $ Q = \Delta B_1 $, where $ B_1 $ is a Borel subgroup of $ G_1 $. 
\end{remark}

In practice, for a particular (e.g., classical) symmetric pair $(G,K)$, 
a combinatorial description of the sets of nilpotent orbits $\mathcal{N}_\mathfrak{k}/K$ or $\mathcal{N}_\mathfrak{s}/K$ is known (see \cite{Collingwood.McGovern.1993}). 
Then a natural problem is 
to find a combinatorial parametrization of the orbits $\mathfrak{X}/K$ and to describe the maps of (\ref{3}) in an explicit way. Let us summarize this into a program:

\begin{program} Let $\mathfrak{X}=K/Q\times G/P$ be the double flag variety corresponding to a symmetric pair $(G,K)$ and a pair of parabolic subgroups $(P,Q)$ as above,
and assume the conditions \eqref{1} and \eqref{2}.
\begin{itemize}
\item[\rm (A)] Find a combinatorial parametrization of the orbits $\mathfrak{X}/K$;
\item[\rm (B)] Compute the maps
$ \Xik : \mathfrak{X}/K\to\mathcal{N}_\mathfrak{k}/K$ and 
$ \Xis : \mathfrak{X}/K\to\mathcal{N}_\mathfrak{s}/K$
of \eqref{3} explicitly.
\end{itemize}
\end{program}

\subsection{} \label{section-1.3}
One of the main goals of this paper is to carry out the program of Section \ref{section-1.2} in the case of the symmetric pair
\[
(G,K)=(\mathrm{GL}_{2n}(\C),\mathrm{GL}_n(\C)\times\mathrm{GL}_n(\C)) 
\]
and to establish new combinatorial bijections involving \emph{partial permutations}.
We consider everything over $ \C $, and sometimes we simply write 
$ \GL_n $ or $ \Mat_n $ instead of $ \GL_n(\C) $ or $ \Mat_n(\C) $ respectively.  
In fact, all the results in this paper are still valid if we replace $ \C $ by any algebraically closed field of characteristic zero.  

Let us choose the parabolic subgroups $ P $ and $ Q $ as follows.

\medskip

\begin{itemize}
\item 
$ P = P_\mathrm{S} := 
\left\{\left(\begin{array}{cc} a & c \\ 0 & b \end{array}\right):a,b\in\GLnC,\ c\in\MatnC\right\} 
= \Stab_G(\C^n\oplus\{0\})$ 
is a maximal parabolic subgroup (Siegel parabolic subgroup);

\smallskip

\item $Q=B_K\subset K$ is a Borel subgroup.
\end{itemize}

\medskip

\noindent
Thus $K/B_K$ is the full flag variety of
$K=\GLnC\times\GLnC$ (i.e.,
the double flag variety of $\GLnC$) while
$G/P_\mathrm{S}\cong\mathrm{Gr}_n(\C^{2n})$ can be
identified with the Grassmann variety of $n$-dimensional subspaces
of $\C^{2n}$.  
In this respect, our double flag variety is isomorphic to 
$ \Flag(\C^n) \times \Flag(\C^n) \times \Grass_n(\C^{2n}) $ on 
which $ K = \GLnC \times \GLnC $ acts.

Note that 

\begin{center}
{\em In the situation under consideration, conditions (\ref{1}) and (\ref{2}) are satisfied.}
\end{center}

\noindent
This assertion follows from \cite[Table 3]{Nishiyama-Ochiai} and \cite[Proposition 4.2]{Fresse-Nishiyama}.

\medskip

Let us explain in more detail our strategy to the steps (A) and (B) in the
program for the double flag variety $\mathfrak{X}=K/B_K{\times}G/P_\mathrm{S}$ above.
%%corresponding to the above choice of $(G,K,P,Q)$.

(A) In Section \ref{section-8}, we give a
parametrization of the $K$-orbits in the double flag variety
$\mathfrak{X}$. It is equivalent to
parametrize the $B_K$-orbits in the Grassmann variety
$G/P_\mathrm{S}\cong\mathrm{Gr}_n(\C^{2n})$. To this end, we
consider the set $(\mathfrak{T}_n^2)'$ of $(2n)\times n$ matrices of
rank $n$ of the form $ \omega = \left(\begin{array}{c} \tau_1
\\ \tau_2 \end{array}\right)$, associated to a pair of
partial permutations $\tau_1,\tau_2\in\mathfrak{T}_n$.   
Here a partial permutation of $ [n] = \{ 1, 2, \dots, n \} $ means 
an injective map from a subset $ J \subset [n] $ to $ [n] $.  
As in the case of permutation, 
we can associate a matrix in $ \MatnC $ with a partial permutation $ \tau $, which we also denote by $ \tau $ by abuse of notation.  
See Definition \ref{D1} for details.  
Let us denote the image of the matrix $ \omega $ by $ \Im \omega $, which is an $ n $-dimensional subspace of $ \C^{2n} $ generated by the column vectors of $ \omega $.  
Thus $ \Im \omega $ represents a point in $ \Grass_n(\C^{2n}) $.

Here is a classification of the set of orbits $ \dblFV/K $ by partial permutations.

\begin{theorem}[see Theorem \ref{T3}]\label{thm:introT3}
Every $B_K$-orbit of
$\mathrm{Gr}_n(\C^{2n})$ is of the form
$B_K\cdot(\Im \omega)$ for some
$\omega\in(\mathfrak{T}_n^2)'$.
Moreover, $\omega$ and $\omega'$ represent the same $B_K$-orbit if and only if they coincide up to column permutation.  
Thus we get 
$ (\mathfrak{T}_n^2)'/\mathfrak{S}_n \simeq \dblFV/K $.
\end{theorem}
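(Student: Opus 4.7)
The plan is to first reduce the classification of $K$-orbits on $\mathfrak{X}$ to that of $B_K$-orbits on the Grassmannian $G/P_{\mathrm{S}} \simeq \mathrm{Gr}_n(\C^{2n})$, then to produce canonical orbit representatives via a row-and-column reduction, and finally to verify uniqueness using $B_K$-invariants. The standard bijection $K \cdot (kB_K, gP_{\mathrm{S}}) \mapsto B_K \cdot k^{-1} g P_{\mathrm{S}}$ identifies $K \backslash (K/B_K \times G/P_{\mathrm{S}}) \simeq B_K \backslash G/P_{\mathrm{S}}$, and each $n$-subspace of $\C^{2n}$ is the image $\Im \omega$ of some $\omega \in \mathrm{Mat}_{2n \times n}(\C)$ of rank $n$, unique up to right multiplication by $\mathrm{GL}_n(\C)$. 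The problem therefore becomes the description of the double cosets $B_K \backslash \{\omega : \mathrm{rank}\,\omega = n\} / \mathrm{GL}_n(\C)$, where $B_K = B_1 \times B_2$ acts block-diagonally on the left.

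For existence of a representative in $(\mathfrak{T}_n^2)'$, I would use the basic normalization: for any nonzero $u \in \C^n$ whose last nonzero coordinate is at row $a$, there is $b \in B_1$ with $bu = e_a$ (scale row $a$ to one, then use off-diagonal upper triangular entries to clear rows $1, \dots, a-1$). Writing $\omega = \left(\begin{smallmatrix} U \\ W \end{smallmatrix}\right)$ and applying this normalization column by column in each block, together with right $\mathrm{GL}_n(\C)$-action on columns to ensure that the upper-block pivots (resp.\ lower-block pivots) take distinct row indices across columns, reduces $\omega$ to the required form $\left(\begin{smallmatrix} \tau_1 \\ \tau_2 \end{smallmatrix}\right)$ with $\tau_1, \tau_2$ partial permutation matrices. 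The rank hypothesis forces every column to be nonzero, hence $\mathrm{dom}(\tau_1) \cup \mathrm{dom}(\tau_2) = [n]$ and the result lies in $(\mathfrak{T}_n^2)'$.

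For uniqueness, I observe that the intersection dimensions $N_{a,b}(V) := \dim\!\bigl(V \cap (F_a^{\mathrm{up}} \oplus F_b^{\mathrm{lo}})\bigr)$---for the $B_1$-stable flag $F_\bullet^{\mathrm{up}} \subset \C^n \oplus 0$ and the $B_2$-stable flag $F_\bullet^{\mathrm{lo}} \subset 0 \oplus \C^n$---depend only on the $B_K$-orbit of $V = \Im \omega$. When $\omega \in (\mathfrak{T}_n^2)'$, a direct calculation gives
\[
N_{a,b}(V) = \bigl|\{k \in [n] : \tau_1(k) \in [a] \cup \{*\} \text{ and } \tau_2(k) \in [b] \cup \{*\}\}\bigr|,
\]
where $\tau_i(k) = *$ means $k \notin \mathrm{dom}(\tau_i)$. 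Second-order differences in $a, b$ then recover the multiset $\{(\tau_1(k), \tau_2(k))\}_{k \in [n]}$, so two elements $\omega, \omega' \in (\mathfrak{T}_n^2)'$ in the same $B_K$-orbit yield the same multiset and hence coincide up to column permutation.

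The main technical obstacle I expect is the bookkeeping in the existence algorithm: a column operation used to normalize the upper block simultaneously acts on the lower block and vice versa, so a naïve sequential reduction can destroy previously normalized entries. I would handle this by processing pivot rows from the largest to the smallest row index within each block (in an appropriately interleaved order), so that every column operation only ``raises'' pivots to smaller row indices and therefore preserves the partial permutation structure already established.
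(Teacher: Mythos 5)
Your overall strategy---reduce to $B_K\backslash\mathrm{Gr}_n(\C^{2n})$, produce representatives by a two-block row/column reduction, and separate orbits using the flag-intersection dimensions $\dim\bigl(V\cap(V_a^+\oplus V_b^-)\bigr)$---matches the paper's approach, and your uniqueness argument (recovering the multiset of column types from second-order differences of $N_{a,b}$) is essentially the same as the paper's and is correct. The gap is in the \emph{existence} step. You correctly identify the central difficulty: a column operation used to normalize the lower block simultaneously acts on the upper block and can destroy a partial-permutation structure already established there. But the proposed remedy---``processing pivot rows from the largest to the smallest row index within each block (in an appropriately interleaved order), so that every column operation only `raises' pivots to smaller row indices''---is not a well-defined algorithm and does not by itself resolve the coupling. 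Concretely, if after normalizing the upper block one column $c_k$ has $U$-pivot at row $1$ while another column $c_\ell$ has zero upper block, then a $W$-conflict that forces you to add $c_k$ to $c_\ell$ introduces a nonzero entry in row $1$ of $c_\ell$'s upper block which cannot be cleared by $B_1$ (there is no later pivot in $c_\ell$ to subtract from), so the partial-permutation form of $U$ is irreparably lost. Whether the conflict can always be resolved in the ``safe'' direction (adding the column with the smaller or undefined $U$-pivot into the one with the larger $U$-pivot) is exactly the nontrivial content that needs a proof, and ``largest to smallest row index'' does not by itself give it.

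The paper isolates this difficulty into a precise lemma: for any partial permutation $\tau_1$ there is a permutation $w$ such that $\tau_1 w\,B\subset B\,\tau_1 w$ (Lemma \ref{L2} in the text). This says that after an initial column permutation the entire Borel subgroup acting on columns from the right preserves the normalized upper block up to left $B_1$-action; the lower block can then be reduced by Proposition \ref{P3} with no further interference, and the resulting right factors are absorbed. To fix your argument you would either need to prove this containment (or an equivalent statement) and then invoke it as the paper does, or else specify the ``interleaved ordering'' precisely enough that each required column operation can be justified and show that no configuration forces a forbidden operation. As written, the existence half of your proof is incomplete.
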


(B) Sections \ref{section-9}--\ref{section-10} are devoted
to the calculation of the maps
$\Xik:\mathfrak{X}/K\to\mathcal{N}_\mathfrak{k}/K$ and
$\Xis:\mathfrak{X}/K\to\mathcal{N}_\mathfrak{s}/K$ of (\ref{3}).  
Now by the parametrization of $ \dblFV/K $ (Theorem~\ref{thm:introT3} above), 
we can regard these maps as 
\begin{align}
\Xik: & (\mathfrak{T}_n^2)'/\mathfrak{S}_n \to \partitionsof{n}^2 ,
\\
\Xis: & (\mathfrak{T}_n^2)'/\mathfrak{S}_n \to \signpartitionsof{2n} ,
\end{align}
where $ \signpartitionsof{2n} $ denotes 
the set of signed Young diagrams of size $ 2n $ with signature $ (n,n) $ 
parametrizing the set of nilpotent $ K $-orbits $ \mathcal{N}_\mathfrak{s}/K$.  
We still call $ \Xik $ and $ \Xis $ the \emph{symmetrized} and 
\emph{exotic moment maps} respectively, 
though they are all reduced down to combinatorial way.

The description of these maps are quite involved.  %%(especially for the latter map).
We give a combinatorial description for the $K$-orbits 
of $\mathfrak{X}$ which correspond to the matrices $\omega$ of the
form 
$
\omega
=\left(\begin{array}{c} \tau
\\ 1_n \end{array}\right)
$ where $\tau$ is a partial
permutation.  
However, the problem for more degenerate orbits remains open.
Our main results regarding step (B) can be summarized as follows.

\begin{theorem}[Theorems \ref{T4} and \ref{T5}]\label{TII}
For $\omega
=\left(\begin{array}{c} \tau
\\ 1_n \end{array}\right)
\in(\mathfrak{T}_n^2)'$, 
there are combinatorial algorithms which describe 
$ \Xik(\omega) $ and $ \Xis(\omega) $.  
The algorithm for $ \Xik(\omega) $ reduces to the classical Robinson-Schensted algorithm 
if $ \tau $ is a permutation. 
\end{theorem}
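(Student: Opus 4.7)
The plan is to fix a concrete base point $x_\omega = (eB_K,\mathrm{Im}\,\omega) \in \mathfrak{X}$, describe the fiber $N^*_{x_\omega}$ of the conormal bundle $T^*_{\mathbb{O}_\omega}\mathfrak{X}$ explicitly, and reduce the computation of $\Xik(\omega)$ and $\Xis(\omega)$ to identifying the generic Jordan data (resp.\ signed Young diagram data) of the image of $\pi_\mathfrak{k}$ (resp.\ $\pi_\mathfrak{s}$) restricted to $N^*_{x_\omega}$. By the equidimensionality proposition stated earlier, together with $K$-equivariance, the $K$-saturation of this image is the closure of the correct nilpotent $K$-orbit, so producing any generic element of the right Jordan/signed type pins down the answer.

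First I would exploit the special block shape $\omega = \left(\begin{smallmatrix}\tau\\ 1_n\end{smallmatrix}\right)$ to realize $\mathrm{Im}\,\omega$ as the graph of a partial linear map read off from $\tau$, and use the Cartan decomposition to write a covector in $N^*_{x_\omega}$ as an element of $\mathfrak{k}^*$ satisfying the annihilation conditions dictated by $B_K$ and by $\mathrm{Im}\,\omega$. In coordinates this pins the covector down to a linear subspace of $\mathfrak{k}^*$ whose free parameters are indexed by ``slots'' controlled by $\tau$, and $\pi_\mathfrak{k}$ sends it to an explicit pair of strictly upper triangular matrices in $\mathfrak{gl}_n \oplus \mathfrak{gl}_n$, built in a transparent way from these parameters.

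Next, to compute $\Xik(\omega) = (\lambda^{(1)},\lambda^{(2)})$ I would analyze the Jordan type of a generic element of this family. The expected answer is given by an RS-style algorithm on $\tau$: read the nontrivial entries of the partial permutation in a prescribed order and row-insert them into two growing tableaux, with the ``missing'' columns producing trivial bumping steps that nonetheless modify the pair of recording tableaux. The correctness check is to exhibit an explicit generic covector whose Jordan pair realizes the predicted shapes, and to verify by dimension count against $\dim \mathbb{O}_\omega$ that this pair indexes the dense image. For $\Xis(\omega)$, the identity $\pi_\mathfrak{s} = \pi_\mathfrak{g} + \pi_\mathfrak{k}$ turns the problem into computing the ranks of powers of a $2n \times 2n$ nilpotent matrix relative to the decomposition $\mathbb{C}^{2n} = \mathbb{C}^n \oplus \mathbb{C}^n$, from which the signed Young diagram is read via a second combinatorial procedure on $\tau$.

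The main obstacle will be the identification of the generic Jordan type in the partial case: the undefined entries of $\tau$ introduce additional degrees of freedom in the conormal fiber beyond the classical Steinberg setting, and verifying that the conjectured insertion algorithm captures precisely the generic shape will require either a careful degeneration argument inside $N^*_{x_\omega}$ or an explicit construction of the relevant element paired with a dimension calculation. When $\tau$ is a genuine permutation matrix, these extra parameters disappear, $\omega$ reduces to the graph of a bijection, and the $K$-orbit on the Grassmannian encodes a relative position of two full flags on $\mathbb{C}^n$; the computation then collapses onto Steinberg's original double flag setting for $\mathrm{GL}_n$, so the algorithm specializes to the classical Robinson--Schensted correspondence via the theorem of Steinberg and Spaltenstein, yielding the last assertion of the theorem.
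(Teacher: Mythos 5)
Your high-level plan (pin down the conormal fiber at a base point, then identify the generic Jordan/signed-Young type of its image under $\pi_\mathfrak{k}$ and $\pi_\mathfrak{s}$) is the same skeleton the paper uses. But you stop before the steps that actually carry the content of the theorem, and the places where you gesture at an algorithm don't match the eventual mechanism.

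The first missing idea is the key reduction. Once you compute the fiber of $T^*_{\orbitofX_\omega}\mathfrak{X}$ over the base point $(\mathfrak{b}\times\mathfrak{b},\,\mathfrak{p}_\omega)$, the special block shape $\omega = \bigl(\begin{smallmatrix}\tau\\1_n\end{smallmatrix}\bigr)$ forces $x\in\mathfrak{u}_\omega$ to have the form $x=\bigl(\begin{smallmatrix}\tau x_3 & -\tau x_3 \tau\\ x_3 & -x_3\tau\end{smallmatrix}\bigr)$, so that $x^\theta = \bigl(\begin{smallmatrix}\tau y & 0\\ 0 & -y\tau\end{smallmatrix}\bigr)$ with $y=x_3$ ranging over $\{y : (\tau y, y\tau)\in\lie{n}\times\lie{n}\}$. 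That is \emph{literally} the fiber of the conormal variety for the $B\times B$-action on $\Mat_n$ studied in the middle part of the paper. The problem collapses onto the generalized Steinberg map $\Phi$, and that identification is what makes Theorem \ref{T4} a one-line consequence of Theorem \ref{T2}. You never make this link; your ``slots indexed by $\tau$'' phrasing stays one step short.

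The second gap concerns what the algorithm actually is. You anticipate ``row-insert the nontrivial entries with missing columns causing trivial bumping,'' but the correct algorithm is different: form the nondegenerate part $\sigma$ of $\tau$, take the RS pair $(\RSl(\sigma),\RSr(\sigma))$, then row-insert the coimage labels $\ell_1<\cdots<\ell_s$ into $\RSl(\sigma)$ and column-insert the kernel labels $m_1<\cdots<m_s$ into $\RSr(\sigma)$. Its correctness does not come from a degeneration or dimension argument as you suggest, but from exhibiting explicit permutations $w_1,w_2$ (padding $\tau$ on the right and top, respectively, in \eqref{w1-w2}) for which the fiber spaces $V_i(\tau)$ equal $\lie{n}\cap\wconjn{w_i}{n}$; then classical Steinberg theory plus Theorem \ref{T-RS} finish. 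Without identifying $V_i(\tau)$ with one of these classical spaces, you have no handle on the generic Jordan type.

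Finally, for $\Xis$ you describe ``ranks of powers'' (correct in broad terms, via $\dim\ker x^k\cap V^\pm$), but you don't see the real difficulty: the even-column counts of $\Xis(\omega)$ drop out of $\Phi(\tau)$, whereas the odd-column counts require two separate nontrivial reductions — padding $\tau$ to a genuine permutation $\widehat\tau\in\mathfrak{S}_{n+s}$ to handle the $+$'s (Lemmas \ref{L7}--\ref{L8}), and restricting to $\Mat_r$ with an induced permutation $\tau'\in\mathfrak{S}_r$ for the $-$'s (Lemma \ref{L9}). Your proposal does not anticipate that the parity of $k$ splits the problem or that one needs both an ``enlargement'' and a ``contraction'' of $\tau$. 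Those constructions are the substance of Theorem \ref{T5}; a dimension-count or degeneration heuristic will not produce them.

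The last assertion (specialization to classical RS for permutations) you sketch correctly: once $\tau$ is invertible, $x_3\tau$ and $\tau x_3$ are conjugate and one is back in the Springer/Steinberg situation; the paper gets this through Proposition \ref{proposition-oldequation6}. In summary: the frame is right, but the conormal-fiber identification with $\Phi$, the specific insertion algorithm and the padding permutations $w_1,w_2$, and the enlargement/contraction tricks for $\Xis$ are all missing, and these are exactly where the theorem lives.
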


\subsection{Image of the conormal variety}
\label{section-1.4}
The setting being as in Section \ref{section-1.3},
we also consider the image of the whole conormal variety
by the maps $\pi_{\mathfrak{k}}$ and $\pi_{\mathfrak{s}}$. The
images $\pi_\mathfrak{k}(\mathcal{Y})$ and
$\pi_\mathfrak{s}(\mathcal{Y})$ are $K$-stable subsets of
$\mathcal{N}_\mathfrak{k}$ and $\mathcal{N}_\mathfrak{s}$,
respectively, which are not closed in general (see Remark
\ref{R7.1}). We define the nilpotent varieties
$\mathcal{N}_{\mathfrak{X},\mathfrak{k}}:=\overline{\pi_\mathfrak{k}(\mathcal{Y})}$
and
$\mathcal{N}_{\mathfrak{X},\mathfrak{s}}:=\overline{\pi_\mathfrak{s}(\mathcal{Y})}$.

\begin{theorem}
\label{T1}
We have $\mathcal{N}_{\mathfrak{X},\mathfrak{k}}=\mathcal{N}_\mathfrak{k}$, while the ``exotic nilpotent variety'' $\mathcal{N}_{\mathfrak{X},\mathfrak{s}}$ is not irreducible. Specifically, if $n=1$, then $\mathcal{N}_{\mathfrak{X},\mathfrak{s}}=\mathcal{N}_\mathfrak{s}$ (it has two irreducible components; see Remark \ref{R2.1}).\ If $n\geq 2$, then $\mathcal{N}_{\mathfrak{X},\mathfrak{s}}$
has exactly three irreducible components described as follows.
\begin{thmenumeralph}
\item Assume that $n$ is even. Then the components of $\mathcal{N}_{\mathfrak{X},\mathfrak{s}}$ are the closures of the $K$-orbits parametrized by the signed Young diagrams
\[\Lambda_+:=\tableau{+-+-\cdots-,+-+-\cdots-}\,,\quad \Lambda_0:=\tableau{+-+-\cdots-,-+-+\cdots+}\,,\quad \Lambda_-:=\tableau{-+-+\cdots+,-+-+\cdots+}\]
%\[\Lambda_+:=\young(+-+-\cdots-,+-+-\cdots-)\,,\quad \Lambda_0:=\young(+-+-\cdots-,-+-+\cdots+)\,,\quad \Lambda_-:=\young(-+-+\cdots+,-+-+\cdots+)\]
where the rows have length $n$. In this case the variety $\mathcal{N}_{\mathfrak{X},\mathfrak{s}}$ is equidimensional of dimension $2(n^2-n)$.
\item Assume that $n$ is odd. Then the components of $\mathcal{N}_{\mathfrak{X},\mathfrak{s}}$ are the closures of the $K$-orbits parametrized by
\[\Lambda_+:=\tableau{+-+-\cdots-+-,+-+-\cdots-}\,,\quad \Lambda_0:=\tableau{+-+-\cdots+,-+-+\cdots-}\,,\quad \Lambda_-:=\tableau{-+-+\cdots+-+,-+-+\cdots+}\]
%\[\Lambda_+:=\young(+-+-\cdots-+-,+-+-\cdots-)\,,\quad \Lambda_0:=\young(+-+-\cdots+,-+-+\cdots-)\,,\quad \Lambda_-:=\young(-+-+\cdots+-+,-+-+\cdots+)\]
where the rows have lengths $n-1,n,n+1$. The variety
$\mathcal{N}_{\mathfrak{X},\mathfrak{s}}$ is not equidimensional 
and, it has two components of dimension $2(n^2-n)+1$ and one component of dimension $2(n^2-n)$.
\end{thmenumeralph}
\end{theorem}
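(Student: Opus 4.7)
The plan is to handle the two assertions separately, in ascending order of difficulty.

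\smallskip

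\emph{The equality $\mathcal{N}_{\mathfrak{X},\mathfrak{k}}=\mathcal{N}_\mathfrak{k}$ and the case $n=1$.}
Since $\mathcal{N}_\mathfrak{k}\cong\mathcal{N}_{\lie{gl}_n(\C)}\times\mathcal{N}_{\lie{gl}_n(\C)}$ is irreducible with dense orbit labelled by $((n),(n))$, it is enough to attain this orbit. Applying the algorithm of Theorem~\ref{TII} to $\omega=\binom{1_n}{1_n}$ (both components the identity permutation) yields $\Xik(\omega)=((n),(n))$ via classical Robinson--Schensted on the identity; alternatively, the sub-Lagrangian of $\mathcal{Y}$ on which the $T^*(G/P_\mathrm{S})$-factor vanishes identifies with $T^*(K/B_K)$, and there $\pi_\mathfrak{k}$ restricts to the Springer resolution $\mu_{K/B_K}$, which surjects onto $\mathcal{N}_\mathfrak{k}$. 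For $n=1$, the nilpotent variety $\mathcal{N}_\mathfrak{s}$ is the union of the two coordinate axes in $\mathfrak{s}\cong\C^2$, both attained by $\pi_\mathfrak{s}$ applied to the two nonzero elements of $(\mathfrak{T}_1^2)'$, so $\mathcal{N}_{\mathfrak{X},\mathfrak{s}}=\mathcal{N}_\mathfrak{s}$.

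\smallskip

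\emph{Shape bound for $n\geq 2$.} By Proposition~\ref{P1}, $\mathcal{N}_{\mathfrak{X},\mathfrak{s}}=\bigcup_\mathbb{O}\overline{\Xis(\mathbb{O})}$ is a finite union of $K$-orbit closures in $\mathcal{N}_\mathfrak{s}$, and its irreducible components are the maximal ones under the closure order. The key structural constraint is that the $T^*(G/P_\mathrm{S})$-fibre $\xi$ of any point of $\mathcal{Y}$ lies in $\mathrm{Ad}_y$ of the nilradical of $P_\mathrm{S}$, whose elements square to zero; thus $\xi^2=0$ in $\lie{gl}_{2n}(\C)$. Writing $\xi=\xi^\theta+\xi^{-\theta}$, with $\xi^\theta\in\mathcal{N}_\mathfrak{k}$ (by the moment-map equation) and $\xi^{-\theta}=\pi_\mathfrak{s}(Y)\in\mathcal{N}_\mathfrak{s}$, the relation $\xi^2=0$ splits into $(\xi^\theta)^2+(\xi^{-\theta})^2=0$ in $\mathfrak{k}$ and $\{\xi^\theta,\xi^{-\theta}\}=0$ in $\mathfrak{s}$. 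Since the Jordan blocks of $\xi^\theta$ all have size at most $n$, those of $(\xi^\theta)^2=-(\xi^{-\theta})^2$ have size at most $\lceil n/2\rceil$, which forces the Jordan type of $\xi^{-\theta}$ to have maximum part at most $n$ when $n$ is even and at most $n+1$ when $n$ is odd. Under these shape constraints the maximal signed Young diagrams of signature $(n,n)$ are precisely $\Lambda_+,\Lambda_0,\Lambda_-$, possibly together with ``mixed starting sign'' diagrams of shape $(n+1,n-1)$ when $n$ is odd that must be excluded separately.

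\smallskip

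\emph{Realisation, dimensions, and the main obstacle.} For the lower bound, I would exhibit for each of $\Lambda_+,\Lambda_0,\Lambda_-$ an explicit $\omega=\binom{\tau}{1_n}\in(\mathfrak{T}_n^2)'$ (natural choices: the identity partial permutation for $\Lambda_0$, and suitable block embeddings of smaller permutations for $\Lambda_\pm$), and verify via the algorithm of Theorem~\ref{T5} that $\Xis(\omega)$ equals the prescribed signed diagram. Dimensions then follow from the Kostant--Rallis halving principle: a $K$-orbit in $\mathcal{N}_\mathfrak{s}$ of Jordan type $\mu$ has dimension equal to half of the corresponding $\mathrm{GL}_{2n}(\C)$-orbit in $\mathcal{N}_{\lie{gl}_{2n}(\C)}$, giving $2(n^2-n)$ for $\mu=(n,n)$ and $2(n^2-n)+1$ for $\mu=(n+1,n-1)$. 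The main obstacle is the exclusion of the extra ``mixed starting sign'' diagrams in the odd case --- the shape bound alone is insufficient. I expect this to follow from the anticommutation $\{\xi^\theta,\xi^{-\theta}\}=0$, which forces a rigid pairing between the $\theta$-grading and the Jordan basis of $\xi^{-\theta}$ and eliminates exactly the signed diagrams whose row starting signs are incompatible with the alternating $\theta$-eigenspace decomposition on $\ker\xi^{-\theta}$.
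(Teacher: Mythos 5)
Your overall architecture is close to the paper's: realize $\Lambda_0,\Lambda_\pm$ from explicit orbits, bound the number of columns using $\xi^2=0$ (your derivation via $(\xi^{-\theta})^2=-(\xi^{\theta})^2$ is exactly Lemma \ref{L7.2}\,(b)), then exclude the remaining width-$(n+1)$ diagrams when $n$ is odd. But the step you yourself flag as ``the main obstacle'' is precisely the step you do not carry out, and it is the heart of the theorem: writing ``I expect this to follow from the anticommutation'' is not a proof. The paper's mechanism (Lemma \ref{L7.2}\,(a)) is concrete and you should reproduce something of this precision: for $\xi=\left(\begin{smallmatrix} a & y\\ z& b\end{smallmatrix}\right)$ the relations $za+bz=0$ and $b^2+zy=0$ show that $b$ preserves $\Im z$ and that $zy|_{\Im z}=-(b|_{\Im z})^2$; since the Jordan type of $zy|_{\Im z}$ is the partition $\Lambda[-)$ (delete the rightmost box of each row and count the remaining $-$'s), both $\Lambda[-)$ and, symmetrically, $\Lambda[+)$ must be Jordan types of squares of nilpotent matrices, i.e.\ satisfy the pairing condition \eqref{7.4}. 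For each ``mixed'' diagram of shape $(n+1,n-1)$ one of $\Lambda[\pm)$ equals $(\tfrac{n+1}{2},\tfrac{n-3}{2})$, whose parts differ by $2$, so \eqref{7.4} fails; the same criterion disposes of every other width-$(n+1)$ diagram besides $\Lambda_\pm$ (your reduction to the two mixed $(n+1,n-1)$ diagrams as the only remaining maximal elements is itself asserted, not checked).

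Two further problems. First, your plan to realize all three of $\Lambda_+,\Lambda_0,\Lambda_-$ by matrices $\omega=\left(\begin{smallmatrix}\tau\\ 1_n\end{smallmatrix}\right)$ fails for $\Lambda_+$: Theorem \ref{T5}\,(3) forces the first column of $\Xis(\omega)$ to contain at least $s+1\geq 1$ signs $-$, whereas every row of $\Lambda_+$ begins with $+$ (indeed $\Lambda_+$ never occurs in Figures \ref{F1}--\ref{F3}). The paper obtains $\Lambda_+$ from $\Lambda_-$ by the sign-switching symmetry of Lemma \ref{L7.1}\,(b) (swap $(a,b,y,z)\mapsto(b,a,z,y)$); you need this or an analysis of a non-generic orbit. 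Second, your ``alternative'' argument for $\mathcal{N}_{\mathfrak{X},\mathfrak{k}}=\mathcal{N}_\mathfrak{k}$ is false: on the locus of $\mathcal{Y}$ where the $T^*(G/P_{\mathrm{S}})$-component vanishes, the moment-map equation forces the $T^*(K/B_K)$-component to vanish as well, so $\pi_\mathfrak{k}$ is identically zero there, not the Springer resolution. Your primary computation $\Xik\bigl(\left(\begin{smallmatrix}1_n\\ 1_n\end{smallmatrix}\right)\bigr)=((n),(n))$ is correct and suffices, and your dimension counts by halving the corresponding $\GL_{2n}$-orbit dimensions are fine.
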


In the theorem, we use the parametrization of the $K$-orbits of $\mathcal{N}_\mathfrak{s}$ by signed Young diagrams; see \cite{Collingwood.McGovern.1993}, \cite{Ohta}, or Section \ref{section-2.2} below. Theorem \ref{T1} is proved in Section~\ref{section-11}.

\subsection{}\label{section-1.5} 
To determine the maps $\mathfrak{X}/K\to\mathcal{N}_\mathfrak{k}/K$
and $\mathfrak{X}/K\to\mathcal{N}_\mathfrak{s}/K$ in (\ref{3}) 
%%(especially in the case of the former), 
we consider another version of the Steinberg maps, 
which might be a more straightforward 
generalization of the original Steinberg theory 
and is of independent interest.  Let us explain it.
%%we need to solve an auxiliary problem, which is of independent interest in its own right.  

Note that $B_K$
is of the form $B_K=B_1\times B_2$, for Borel subgroups
$B_1,B_2$ of $ \GLnC $.  
We consider an action of $B_1 \times B_2$ on the space of
$n\times n$ matrices $\MatnC$, one factor acting on the left and the other acting on the right.
Namely the explicit action is given by 
\[(b_1,b_2)\cdot x:=b_1xb_2^{-1} \qquad \forall (b_1,b_2)\in B_K,\ \forall x\in\MatnC.\]
Since the action of $ \GLnC \times \GLnC $ on $ \MatnC $ is clearly spherical, 
there are only finitely many $ B_1 \times B_2 $ orbits on $ \Mat_n $.  
By means of Gaussian eliminations, 
it is easy to obtain a complete system of representatives of the double coset 
space $ B_1 \backslash \MatnC / B_2 $, which turns out to be
the set of partial permutations $\mathfrak{T}_n$ 
(Proposition \ref{P3}). 
The conormal variety for this action is given by 
\[\mathcal{Y}_{\mathrm{M}_n}=\{(x,y)\in\MatnC\times\MatnC:xy\in\mathfrak{n}_1,\ yx\in\mathfrak{n}_2\},\]
where $\mathfrak{n}_i$ stands for the nilradical of
$\mathrm{Lie}(B_i)$; see Proposition \ref{P2}. 
By the general theory (Proposition \ref{P1}), this conormal variety is equidimensional and
its irreducible components are parametrized by partial permutations.
The image of each component of $\mathcal{Y}_{\mathrm{M}_n}$ by the 
moment map
\[\mu_{\Mat_n} : (x,y)\longmapsto (xy,-yx) \in \MatnC^2 \]
determines a pair of nilpotent $ \GLnC $-orbits, 
or in other words, a pair of partitions $ (\lambda, \mu) \in \partitionsof{n}^2 $.
This yields a map from partial permutations to pairs of partitions:
\[
\Phi : \ppermutationsof{n} \longrightarrow \partitionsof{n}^2 , \qquad
\tau \longmapsto (\lambda, \mu) .
\]
We call the map $ \Phi $ a \emph{generalized Steinberg map}.

The result summarized in the next theorem is also one of our main results, 
which generalizes the Robinson-Schensted correspondence to the case of 
partial permutations.  
The theorem is actually independent of the theory of the double flag varieties, 
and we are expecting a further generalization to general reductive groups.  
Theorem~\ref{introthm:T2.thm.3.9} plays a key role in our calculations of the maps $\Xik$ and $\Xis$ of Theorem \ref{TII}.  

Let us prepare one more notation to state the theorem.  
For each $ r \; (0 \leq r \leq n) $, we consider triples of the form $ (T_1, T_2; \nu) $
where $ T_1, T_2 $ are standard tableaux of shapes denoted $ \lambda, \mu \in \partitionsof{n} $ respectively
and $ \nu \in \partitionsof{r} $ is a partition such that
$ \lambda \setminus \nu $ and $ \mu \setminus \nu $ are column strips (or vertical strips).  
In particular, $ \nu $ is in the intersection of $ \lambda $ and $ \mu $.  
Let us denote the set of such triples by $ \Triplets{r} $.

\begin{theorem}[Theorems \ref{T2} and \ref{thm:3.9}]\label{introthm:T2.thm.3.9}
The map $\Phi$ can be described through an explicit combinatorial algorithm.
This algorithm establishes a bijection between 
the set of partial permutations $ \ppermutationsof{n} $ 
and the set of triples $ \bigcup_{r = 0}^n \Triplets{r} $ defined above.  
If the partial permutation is a full permutation, 
the bijection reduces to the classical 
Robinson-Schensted correspondence.  
Namely $ \tau \in \mathfrak{S}_n $ corresponds to $ (T_1, T_2; \lambda) $ 
where $ \shape{T_1} = \shape{T_2} = \lambda $ (so that $ r = n $).
\end{theorem}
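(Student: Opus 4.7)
The plan has three pieces: describe an explicit algorithm $\tau \mapsto (T_1,T_2;\nu)$, show that it computes $\Phi$ by a rank computation on the relevant conormal component, and verify bijectivity by writing down an inverse. For the algorithm itself, given a partial permutation $\tau : J \to [n]$ with $|J| = r$, I regard its graph as a biword whose top row lists the elements of $J$ in increasing order and apply classical row-insertion Robinson--Schensted, producing a pair $(P,Q)$ of standard tableaux of common shape $\nu \vdash r$ with entries drawn from $\tau(J)$ and $J$ respectively. I then extend $P$ to $T_1$ of shape $\lambda \vdash n$ by column-inserting the missing values $[n] \setminus \tau(J)$ one at a time in increasing order, and symmetrically extend $Q$ to $T_2$ of shape $\mu \vdash n$ by column-inserting $[n] \setminus J$ in increasing order. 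Column-inserting a strictly increasing sequence deposits its new boxes into pairwise distinct rows, so $\lambda \setminus \nu$ and $\mu \setminus \nu$ are column strips automatically, which supplies the target side of the correspondence.

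For the identification with $\Phi$, one must compute the Jordan types of $xy$ and $yx$ for generic $(x,y)$ in the irreducible component $\overline{T^*_{B_K\cdot\tau}\Mat_n}$ of the conormal variety. Using Proposition~\ref{P2}, I would parametrize this generic fibre and compute $\dim\ker((xy)^k)$ and $\dim\ker((yx)^k)$ for all $k \geq 1$; these dimensions determine the conjugate partitions of $\lambda$ and $\mu$ through the classical formula $\dim\ker(N^k) = N'_1 + \cdots + N'_k$ for a nilpotent $N$. The bridge back to the algorithm is a Greene--Kleitman-type identity adapted to partial permutations: the bijective part of $\tau$ contributes $\nu$ via the classical theorem (so $\nu'_1 + \cdots + \nu'_k$ equals the maximum size of a union of $k$ decreasing subsequences of $\tau|_J$), while the unmatched rows $[n]\setminus J$ and columns $[n]\setminus \tau(J)$ contribute precisely the rank defects that grow $\nu$ into $\lambda$ and $\mu$ by column strips. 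This is the step I expect to be the main obstacle: one must choose the generic parametrization cleanly and massage the rank identities into agreement with the second-phase column insertions, invoking a Flanders-style comparison of the Jordan types of $AB$ and $BA$ as the geometric reason for using column (rather than row) insertion in the extension phase.

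For bijectivity, the inverse map is built by reversal. Given $(T_1,T_2;\nu)$ with $\lambda\setminus\nu$ and $\mu\setminus\nu$ column strips, reverse column-insert the entries of $\lambda\setminus\nu$ out of $T_1$ in the order forced by the column-strip structure, and likewise reverse column-insert the entries of $\mu\setminus\nu$ out of $T_2$, obtaining a pair $(P,Q)$ of standard tableaux of common shape $\nu$ on the entry sets $\tau(J)$ and $J$. Applying inverse row-insertion Robinson--Schensted to $(P,Q)$ then recovers a biword with strictly increasing top row, which is the graph of a partial permutation $\tau$. The two passes are mutually inverse by the uniqueness of Schensted bumping and unbumping at each step. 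Finally, the reduction to classical Robinson--Schensted is immediate: for $\tau \in \mathfrak{S}_n$ one has $J = \tau(J) = [n]$ and $r = n$, so both column-insertion phases are empty, whence $T_1 = P$, $T_2 = Q$ and $\lambda = \mu = \nu$, which is exactly the classical correspondence.
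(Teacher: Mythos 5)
There is a concrete error in the algorithm you write down for $T_1$, so the proposed map does not compute $\Phi$. You extend $P=\RSl(\sigma)$ to $T_1$ by \emph{column}-inserting the complement $\{\ell_1<\cdots<\ell_s\}=[n]\setminus\tau(J)$ in increasing order, i.e.\ you form $\ell_s\to\cdots\to\ell_1\to P$; but the correct operation is $T_1 = P\leftarrow\ell_s\leftarrow\cdots\leftarrow\ell_1$, namely \emph{row}-inserting the $\ell_i$ in \emph{decreasing} order (equivalently, the jeu-de-taquin product $P*\tableaul{\ell_1\\ {}^{\vdots}\\ \ell_s}$, placing the column to the upper right of $P$). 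Both operations produce a column strip, but they produce different strips and different shapes. A minimal example: $\tau=\begin{smallmatrix}1&2&3\\ 0&1&2\end{smallmatrix}$, so $\sigma=\begin{smallmatrix}2&3\\1&2\end{smallmatrix}$, $P=\ytableaushort{12}$, $\ell_1=3$. The paper's formula gives $T_1=\ytableaushort{12}\leftarrow 3=\ytableaushort{123}$ (shape $(3)$, which indeed matches $\Phi_1(\tau)=(3)$ and the table in Figure~\ref{F2}); your column insertion gives $3\to\ytableaushort{12}=\ytableaushort{12,3}$ (shape $(2,1)$), which is wrong. The ``symmetry'' you invoke between the two phases is a false friend: the $\ell$'s attach to $\RSl(\sigma)$ on the upper right ($T_1*\,\text{col}$) while the $m$'s attach to $\RSr(\sigma)$ on the lower left ($\text{col}*T_2$), so only the $T_2$ phase is a column insertion of an increasing word. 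Your $T_2$ formula is correct; the $T_1$ one must be changed to row insertion in decreasing order.

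A second issue is that your bridge from geometry to combinatorics, via computing $\dim\ker((xy)^k)$ and $\dim\ker((yx)^k)$ on a generic fibre together with a ``Greene--Kleitman-type identity adapted to partial permutations,'' is not carried out, and you flag it yourself as the main obstacle. The paper avoids all of this: it computes the conormal fibre over $\tau$ explicitly as a coordinate subspace of $\lie{n}$, recognizes it as $\lie{n}\cap\wconjn{w_i}{n}$ for explicit permutations $w_1,w_2\in\mathfrak{S}_n$ built from $\tau$, and then invokes the classical Steinberg map (Theorem~\ref{T-RS}) outright, so that $\Phi_i(\tau)=\Stmap(w_i)$ with no rank computations at all. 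The permutations $w_1,w_2$ place the $\ell$'s in decreasing order and the $m$'s in decreasing order followed by the $j'$'s, and it is precisely this choice that forces the decreasing row insertion for $T_1$ which your proposal misses. The Flanders-type comparison of $AB$ and $BA$ you invoke also cannot be the right mechanism, since the whole point here is that $\Phi_1(\tau)\neq\Phi_2(\tau)$ for non-invertible $\tau$.
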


\subsection{}

Let us comment on some relevant references.  
In \cite{Yamamoto1,Yamamoto2}, Atsuko Yamamoto gave a combinatorial algorithm for calculating 
the moment map image of the conormal bundles of 
$K$-orbits on $ G/B $ for classical symmetric pairs $(G,K)$ of type A.  
This particular case can be considered as a special case described in 
Section \ref{section-1.2} if
we take $ Q = K $ and $ P = B $.   
Her description of the problem is very close to ours in spirit.  
  
In \cite{Travkin}, based on the theory of mirabolic character sheaves in \cite{Finkelberg.Ginzburg.Travkin.2009}, Travkin gave a generalization of Robinson-Schensted-Knuth algorithm for 
a triple flag variety of the form $\Flag(\C^n) \times \Flag(\C^n) \times \mathbb{P}(\C^{n})$, which can be also recognized as a special case of the setting of Section \ref{section-1.2} 
if we take $ G = \GL_n \times \GL_n$, $K = \Delta \GL_n$ 
(the diagonal embedded $\GL_n$ into $ G $), 
$ P = B_1 \times B_1 $ (a Borel subgroup of $ G $), and $ Q = P_{\mathrm{mir}} $ (the mirabolic maximal parabolic subgroup of $ K \simeq \GL_n $).  
Travkin actually considered the diagonal $ \GL_n $ action on $\Flag(\C^n) \times \Flag(\C^n) \times \C^n$ and, 
in his terminology, the orbits 
are parametrized by a certain set of pairs $(w,\beta)$, where $ w \in \permutationsof{n} $ and $ \beta \subset [n] $ is related to a decreasing sequence in $ w $.  
His parameter set can actually be identified with the set of partial permutations $\ppermutationsof{n}$,
and the mirabolic Robinson-Schensted-Knuth correspondence establishes a bijection between $ \ppermutationsof{n} $ and the set of triples $ \bigcup_{r = 0}^n \Triplets{r} $. 
However, this bijection appears to be totally different from ours.  
It is interesting to compare these two different bijections and to get a geometric interpretation.  
See Remark \ref{R:Travkin} for further discussion.

Rosso generalized the correspondence to the case of partial flag varieties and gave 
a geometric interpretation of the classical Robinson-Schensted-Knuth correspondence \cite{Rosso.2012}.

In \cite{Henderson-Trapa}, Henderson and Trapa also gave a generalization of Robinson-Schensted correspondence 
for the symmetric pair $ (G, K) = (\GL_{2n}, \Sp_{2n}) $, with $ Q\subset K $ mirabolic and $ P = B \subset G$ a Borel subgroup.  
Their approach is very close to ours.  See also Remark~\ref{remark:dim.identity.HT}.

Finally, in our previous paper 
\cite{Fresse-Nishiyama}, we considered the case where 
$ (G, K) = (\GL_n, \GL_p \times \GL_q)$ ($p + q = n$); 
$ P $ is the stabilizer of a $k$-dimensional subspace of $ \C^n $ (a maximal parabolic subgroup of $ G $) and 
$ Q = Q_1 \times \GL_q $, where $ Q_1 $ is a mirabolic subgroup of $ \GL_p $.  
In some sense \cite{Fresse-Nishiyama} can be taken as a first trial, and the present paper largely deepens it.

\subsection{Organization of the paper}
The paper is divided into three parts. In the first part, we review
the background on the Robinson-Schensted algorithm
(Section \ref{section-RS}), moment maps and conormal varieties
(Section \ref{newsection-3}), the Steinberg map (Section
\ref{section-Steinberg}). In Section \ref{section-2.2}, we review the
basic facts and prepare the notation related to the symmetric pair
$(G,K)$ of type AIII studied in this paper. 

In the second part of the paper, we consider the problem outlined in Section \ref{section-1.5}.
In Section \ref{section-orbits}, we consider an action 
of the product of Borel subgroups 
on the space of $n\times n$ matrices given by left and right multiplications.  
We show that the partial permutations serve as a complete set of representatives for the orbits.
In Section \ref{section-Phi}, we define a generalized Steinberg map
and provide an explicit combinatorial algorithm on partial permutations to calculate this map, 
which generalizes the Robinson-Schensted correspondence on permutations.
We establish an identity between the number of orbits (i.e., partial permutations) 
and the dimension of a certain induced representation of $ \mathfrak{S}_n $ 
(see Corollary \ref{C3.2}).   This identity suggests the existence of a geometric interpretation 
of the construction of such representations (see Conjecture~\ref{conjecture:gen.Springer.representation}).

The final part of the paper is devoted to
the main results outlined in Sections \ref{section-1.3}--\ref{section-1.4}.
Precise statements and proofs of these results
are given in Sections \ref{section-8} (parametrization of the orbit set
$\mathfrak{X}/K$), \ref{section-9} and \ref{section-10} (calculation of the maps
$\Xik$ and $\Xis$). Section
\ref{section-11} contains the proof of Theorem \ref{T1}. 

The setting is recalled at the
beginning of each section. An index of notation can be found at the
end of the paper.

\subsection{Acknowledgements}
We thank Anthony Henderson and George Lusztig for useful remarks which improve the manuscript.
L.F. thanks Aoyama Gakuin University for warm hospitality during his visit in February 2017. This work was initiated in this period.
K.N. thanks Institut \'Elie Cartan de Lorraine in Universit\'e de Lorraine for warm hospitality during his visit in June and July, 2018.  Key ingredients of this work were obtained in this period.

\part{Preliminaries on miscellanea and a review of Steinberg theory}

In this part we review known facts, which we need in the rest of the paper.  
The content of each section varies independently, 
and the notation or settings are often different from section to section.

\section{Robinson-Schensted correspondence}

\label{section-RS}
In this first preliminary section we summarize the background on Young diagrams and tableaux; we refer to \cite{Fulton} for more details.

\subsection{Partitions, Young diagrams, tableaux}

\label{section-2.2.1}

We use the following conventions for Young diagrams and tableaux.

Let $\partitionsof{n}$ denote the set of partitions of $n$, i.e.,
nonincreasing sequences of positive integers
$\lambda=(\lambda_1,\ldots,\lambda_k)$ such that
$\lambda_1+\ldots+\lambda_k=n$.

A partition can be represented by a {\em Young diagram}, also
denoted by $\lambda$, which is an array of empty boxes,
left-justified, whose rows have lengths
$\lambda_1,\ldots,\lambda_k$. We do not distinguish between the
notions of partition and Young diagram.

We call {\em Young tableau of shape $\lambda$} a numbering of the
boxes of $\lambda$ by pairwise distinct entries, which are in
increasing order along rows (from left to right) and columns (from
top to bottom). We write $\lambda=\mathrm{shape}(T)$ whenever $T$ is
a Young tableau of shape $\lambda$, e.g., 
\[
T
=\tableau{127,36,59,8}\quad\Rightarrow\quad \mathrm{shape}(T)
=\diagram{3,2,2,1}=(3,2,2,1)\in \partitionsof{8} . 
\]
If the set of entries of a Young tableau $ T $ is $ [n] := \{ 1, 2, \dots, n \} $, we call $ T $ 
a \emph{standard tableaux}.  
The set of standard tableaux of shape $ \lambda \in \partitionsof{n} $ is denoted as $ \STab(\lambda) $.

\subsection{Row-insertion and column-insertion algorithms}

\label{section-2.2.2}

Given a Young tableau $T$ and a number $a$ distinct from
all the entries of $T$, we denote by $(T\leftarrow a)$ (resp.
$(a\to T)$) the Young tableau obtained from $T$ by inserting the
entry $a$ according to the following algorithm:
\begin{itemize}
\item If $a$ is larger than any entry in the first row (resp. column) of $T$, then insert $a$ in a new box at the end of the first row (resp.
column).
\item Otherwise, let $a_1$ be the smallest entry in the first row (resp. column) of $T$ which is $>a$;
substitute $a_1$ by $a$ and insert $a_1$ in the subtableau of $T$ starting with the second row (resp. column), according to the same rule.
\end{itemize}
We call this procedure {\em row insertion} (resp. {\em column
insertion}).

For instance, for $T$ as above, we get
\[(T\leftarrow 4)=\tableau{124,367,59,8}\quad\mbox{and}\quad (4\to T)=\tableau{1267,35,49,8}.\]

Given a list $(a_1,\ldots,a_\ell)$ of pairwise distinct numbers, let
\begin{eqnarray*}
 & \rinsert(a_1,\ldots,a_\ell):=((\cdots((\emptyset\leftarrow
a_1)\leftarrow a_2)\cdots )\leftarrow a_\ell) \\
\mbox{and} & \cinsert(a_1,\ldots,a_\ell):=
(a_\ell\to(\cdots\to(a_2\to(a_1\to \emptyset))\cdots)).
\end{eqnarray*}
We get the following equality (see, e.g., \cite[Theorem
4.1.1]{van-Leeuwen}):
\[
\rinsert(a_1,\ldots,a_\ell)=\cinsert(a_\ell,\ldots,a_1).
\]

\subsection{The Robinson-Schensted correspondence}\label{section-2.2.3}

For the later use, we formulate the correspondence in a slightly general way.

Given a pair of Young tableaux $(T,S)$ of the same shape, a number
$a$ which is distinct from all the entries of $T$, and a number $b$
which is bigger than all the entries of $S$, we denote by
\begin{equation}\label{eq:insertion.in.pair}
(T,S)\leftarrow(a,b)
\end{equation}
the pair of Young tableaux $(T',S')$ such
that
\begin{itemize}
\item
$T'=(T\leftarrow a)$,
\item
$S'$ is obtained from $S$ by putting
the entry $b$ in a new box at the same position as the unique box of
$\mathrm{shape}(T')\setminus\mathrm{shape}(T)$.
\end{itemize}

Let $a_1<\ldots<a_\ell$ and $b_1<\ldots<b_\ell$ be two sequences of
numbers. Given a bijection
$w:\{b_1,\ldots,b_\ell\}\to\{a_1,\ldots,a_\ell\}$, we define a 
pair of Young tableaux using the insertion \eqref{eq:insertion.in.pair} successively 
\[(\RSl(w),\RSr(w)):= (\emptyset,\emptyset) \leftarrow (w(b_1),b_1) \leftarrow (w(b_2),b_2) \leftarrow \cdots \leftarrow (w(b_\ell),b_\ell) . \]
Thus $\RSl(w)=\rinsert(w(b_1),\ldots,w(b_\ell))$ and 
$ \RSr(w) $ encodes the development of the shape. By definition, the tableaux
$\RSl(w)$ and $\RSr(w)$ have the same shape with entries 
$\{a_1,\ldots,a_\ell\}$ and $\{b_1,\ldots,b_\ell\}$ respectively.
Moreover, we have (cf. \cite[\S4.1]{Fulton}):
\[(\RSl(w^{-1}),\RSr(w^{-1}))=(\RSr(w),\RSl(w)).\]

Since an element $ w \in \mathfrak{S}_n $ is a bijection from $ [n] $ to itself, 
we can apply the procedure above.  
Then the map 
\begin{equation}
\mathfrak{S}_n \ni w \mapsto (\RSl(w),\RSr(w)) \in \coprod_{\lambda \in \partitionsof{n}} 
\STab(\lambda) \times \STab(\lambda) 
\end{equation}
establishes a
bijection between the symmetric group and the set of pairs of standard 
tableaux of the same shape. This bijection is
referred to as {\em the Robinson-Schensted correspondence}.

\subsection{Jeu de taquin}

\label{section-2.2.4}

For two partitions $\nu=(\nu_1,\ldots,\nu_\ell)\in\partitionsof{m}$ and $\lambda=(\lambda_1,\ldots,\lambda_k)\in\partitionsof{n}$, we write $\nu\subset\lambda$ if $\ell\leq k$ and $\nu_i\leq \lambda_i$ for all $i\in\{1,\ldots,\ell\}$. Then the set of boxes $\lambda\setminus\nu$ is called a {\em skew diagram}. A {\em skew tableau} of shape $\lambda\setminus\nu$ is a numbering of the boxes of $\lambda\setminus\nu$ by pairwise distinct integers which are in increasing order along rows and columns.
Such a skew tableau $T$ can be transformed into a Young tableau by the procedure of {\em jeu de taquin} (or \emph{slidings}):
\begin{itemize}
\item Choose any inside corner of $T$, i.e., a box $c$ of $\nu$ which is adjacent to a box of $T$ and such that $\{c\}\cup(\lambda\setminus\nu)$ is also a skew diagram.
\item {\it Sliding step:}
one or both of the boxes on the right or below $c$ is contained in $T$. Then choose the smaller entry 
(if there are two) and slide it
%%among these (at most) two entries 
into the box $c$.
\item If the box $c_1$ that has just been emptied has also a neighbor on the right or below, then apply the sliding step to $c_1$. Repeat this procedure until the emptied box has no neighbor on the right nor below.
\item Then one obtains a skew tableau whose number of inside boxes is smaller.
Repeat the whole procedure, until one obtains a skew tableau without inside boxes, i.e., a Young tableau.
\end{itemize}
The Young tableau $\mathrm{rect}(T)$ so-obtained is called the {\em rectification of $T$}. 
Note that the result is independent of the choice of inside corners. 
%%with respect to which the procedure is performed.

For instance, if we denote by a dot the choice of inside corner, we proceed 
\[
%%\mbox{by rectifying }
T=\tableaul{\none & \none[\bullet] & 3 \\ \none
& 4 & 7 \\ 1 & 6 & 9 \\ 8} 
%%\mbox{we get }
\xrightarrow{\text{\;sliding\;}}
\tableaul{\none & 3 & 7
\\ \none[\bullet] & 4 & 9 \\ 1 & 6 \\ 8}
\xrightarrow{\text{\;sliding\;}}
\tableaul{\none[\bullet] & 3 & 7 \\ 1 & 4 & 9 \\ 6 \\ 8}
%%\mbox{and finally}\ 
\xrightarrow{\text{\;sliding\;}}
\tableau{137,49,6,8}=\mathrm{rect}(T).
\]
%%(At each step, we mark the inside corner with respect to which the rectification is done.)

If $T,S$ are two Young tableaux whose entries are disjoint,
we define $T*S$ as the rectification of the skew tableau obtained by
putting $S$ at the top right to $T$, e.g.,
\[
\tableau{57,9}*\tableau{134,28}=\mathrm{rect}\left(\tableau{\none\none134,\none\none28,57,9}\right)=\tableau{134,278,5,9}.
\]
Note that
\[T*\tableau{a}=(T\leftarrow a)\quad\mbox{and}\quad \tableau{a}*T=(a\to T),\]
and more generally
\begin{equation*}
T*\tableaul{\ell_1 \\ {}^{\vdots} \\ \ell_s}
=T \leftarrow \ell_s \leftarrow \ell_{s - 1} \leftarrow \cdots \leftarrow \ell_1 
\quad\mbox{ and }\quad
\tableaul{m_1 \\ {}^{\vdots} \\ m_s}*T = m_s \to m_{s - 1} \to \cdots \to m_1 \to T
\end{equation*}
%
%%\[T*\tableaul{\ell_1 \\ {}^{\vdots} \\ \ell_s}
%%=((\cdots(T\leftarrow \ell_s)\leftarrow \cdots)\leftarrow \ell_1)\quad\mbox{and}\quad
%%\tableaul{m_1 \\ {}^{\vdots} \\ m_s}*T=(m_s\to(\cdots \to (m_1\to T)\cdots))\] 
%
whenever $a$, $\ell_1<\ldots<\ell_s$, $m_1<\ldots<m_s$
are not entries of $T$.

\section{Moment maps and conormal varieties}

\label{newsection-3}

\subsection{Conormal variety}
\label{section-2.1} Let $H$ be a connected algebraic group acting on
a smooth algebraic variety $X$. 
The cotangent bundle $T^*X$ has a
structure of symplectic variety and the $H$-action induces a
Hamiltonian action of $H$ on $T^*X$.   
The corresponding moment map is denoted by 
$\mu_X:T^*X\to\mathfrak{h}^*$, where $\mathfrak{h}^*$ denotes the algebraic dual of the Lie algebra of $H$
(\cite[Proposition 1.4.8]{Chriss-Ginzburg}). 

The null fiber of the moment map
\begin{equation*}
\mathcal{Y}_X:=\mu_X^{-1}(0)
\end{equation*}
is a union of Lagrangian subvarieties of $T^*X$
that we call {\em conormal variety}.  
Let us see some of the beautiful nature of conormal varieties.  
In our situation above, 
the moment map $\mu_X$ can be described explicitly as follows (see \cite[\S1.4]{Chriss-Ginzburg}):
\[ \mu_X : T^*X = \{(x,\xi) : \xi\in (T_xX)^*\} \longrightarrow \mathfrak{h}^*, \qquad (x,\xi) \longmapsto \xi\circ d\rho_x\]
where $\rho_x:H\to X$, $h\mapsto h\cdot x$ is the orbit map and $d\rho_x:\mathfrak{h}\to T_xX$ is its differential.

\begin{proposition}\label{P1}
We consider the cotangent bundle $\pi_X:T^*X\to X$ and its restriction to the conormal variety $\mathcal{Y}_X$.
\begin{thmenumerate}
\item\label{P1:item:1}
For each $H$-orbit $\mathbb{O}\subset X$, the restriction $\pi_X^{-1}(\mathbb{O})\cap \mathcal{Y}_X\to\mathbb{O}$ coincides with the conormal bundle $T^*_\mathbb{O}X\to\mathbb{O}$. In particular
$\pi_X^{-1}(\mathbb{O})\cap \mathcal{Y}_X=T^*_\mathbb{O}X$ is a smooth, irreducible, Lagrangian subvariety of $T^*X$, and hence it has dimension $\dim X$.
\item
The conormal variety is a union of the conormal bundles: 
$ \conormalvar_X = \bigsqcup_{\bborbit \in X/H} T^*_{\bborbit} X $.
\item
Consequently, if $X$ has a finite number of $H$-orbits, then $\mathcal{Y}_X$ is equidimensional of dimension $\dim X$, and each irreducible component of $\mathcal{Y}_X$ is of the form $\overline{T_\mathbb{O}^*X}$ for a unique $H$-orbit $\mathbb{O}\subset X$.
\end{thmenumerate}
\end{proposition}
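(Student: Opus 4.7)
The plan is to prove the three assertions sequentially, with (i) carrying the substantive content and (ii)--(iii) following essentially formally. The key input I would use is the explicit formula $\mu_X(x,\xi) = \xi \circ d\rho_x$ recalled just before the proposition.

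First I would fix a point $x \in \mathbb{O}$ and unwind the vanishing condition $\xi \circ d\rho_x = 0$. The differential $d\rho_x : \mathfrak{h} \to T_xX$ surjects onto the tangent space $T_x\mathbb{O}$, since the orbit map $\rho_x$ factors as a submersion $H \twoheadrightarrow H/\mathrm{Stab}_H(x) \xrightarrow{\sim} \mathbb{O}$; here connectedness of $H$ ensures that $\mathbb{O}$ is irreducible. Therefore, for $x \in \mathbb{O}$, the condition $(x,\xi) \in \mathcal{Y}_X$ is equivalent to $\xi$ annihilating $T_x\mathbb{O}$, which is exactly the defining condition for the conormal bundle $T^*_\mathbb{O}X$. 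This identifies $\pi_X^{-1}(\mathbb{O}) \cap \mathcal{Y}_X$ with $T^*_\mathbb{O}X$. The bundle $T^*_\mathbb{O}X$ has rank $\dim X - \dim \mathbb{O}$ over a smooth irreducible base, hence is smooth and irreducible of total dimension $\dim X$; its Lagrangian character inside $T^*X$ is the classical fact that conormal bundles of smooth subvarieties of symplectic cotangent bundles are Lagrangian, which I would simply cite.

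Part (ii) then follows by partitioning $X = \bigsqcup_{\mathbb{O} \in X/H} \mathbb{O}$ and applying (i) fiberwise over $\pi_X$. For (iii), under the finiteness hypothesis, $\mathcal{Y}_X = \bigsqcup_\mathbb{O} T^*_\mathbb{O}X$ is a finite set-theoretic union; since $\mathcal{Y}_X$ is closed (being the zero fiber of a morphism) and closure commutes with finite union, one gets $\mathcal{Y}_X = \bigcup_\mathbb{O} \overline{T^*_\mathbb{O}X}$, a union of irreducible closed subvarieties each of dimension $\dim X$, yielding equidimensionality. To verify that these closures are exactly the irreducible components, I would note that any inclusion $\overline{T^*_{\mathbb{O}_1}X} \subset \overline{T^*_{\mathbb{O}_2}X}$ between irreducible varieties of the same dimension must be an equality; projecting via $\pi_X$ then forces $\overline{\mathbb{O}_1} = \overline{\mathbb{O}_2}$, hence $\mathbb{O}_1 = \mathbb{O}_2$. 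I do not foresee a genuine obstacle: the only small technical point is the surjectivity of $d\rho_x$ onto $T_x\mathbb{O}$, and everything beyond that is dimensional bookkeeping together with the finiteness of the orbit set.
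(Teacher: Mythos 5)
Your proof is correct and follows essentially the same route as the paper: you unwind the explicit moment-map formula $\mu_X(x,\xi)=\xi\circ d\rho_x$, invoke surjectivity of $d\rho_x$ onto $T_x\mathbb{O}$ to identify $\pi_X^{-1}(\mathbb{O})\cap\mathcal{Y}_X$ with $T^*_\mathbb{O}X$, and then deduce (ii)--(iii) by partitioning over orbits. The paper treats (ii)--(iii) as immediate, whereas you spell out the dimension and distinctness bookkeeping, but there is no substantive difference.
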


\begin{proof} It suffices to prove \eqref{P1:item:1}.  The rest of the statements are clear.
For $(x,\xi)\in T^*X$, we have
\begin{eqnarray*}
(x,\xi)\in \pi^{-1}_X(\mathbb{O})\cap\mathcal{Y}_X & \iff & x\in\mathbb{O}\quad \mbox{and}\quad \mu_X(x,\xi)=0 \\
 & \iff & x\in\mathbb{O}\quad\mbox{and}\quad \forall \eta\in\mathfrak{h},\ \xi(d\rho_x(\eta))=0 \\
 & \iff & x\in\mathbb{O}\quad\mbox{and}\quad \xi|_{T_x\mathbb{O}}=0 \\ & \iff & (x,\xi)\in T_\mathbb{O}^*X
\end{eqnarray*}
where we use that the map $d\rho_x:\mathfrak{h}\to T_xX$ is surjective onto $T_x(H\cdot x)$ (see, e.g., \cite[Theorem 4.3.7]{Springer}), hence $\pi_X^{-1}(\mathbb{O})\cap\mathcal{Y}_X=T^*_\mathbb{O}X$. The other assertions made in \eqref{P1:item:1} follow from the properties of the conormal bundle $T^*_\mathbb{O}X\to \mathbb{O}$.
\end{proof}

\subsection{Moment map for double flag variety}
\label{section-2.1.1}

Let $G$ be a connected reductive algebraic group with Lie algebra $\mathfrak{g}$. 
Hereafter we identify $\mathfrak{g}$ with its algebraic dual $\mathfrak{g}^*$ 
via a fixed nondegenerate $G$-invariant bilinear form $\langle\cdot,\cdot\rangle$ on $\mathfrak{g}$.

For a parabolic subgroup $P\subset G$, the partial flag variety
$G/P$ can also be viewed as the set of parabolic subalgebras
$\mathfrak{p}_1\subset\mathfrak{g}$ which are conjugate to
$\mathfrak{p}:=\mathrm{Lie}(P)$. The tangent space
$T_{\mathfrak{p}_1}(G/P)$ coincides with the quotient space
$\mathfrak{g}/\mathfrak{p}_1$. Its dual
$(T_{\mathfrak{p}_1}(G/P))^*=(\mathfrak{g}/\mathfrak{p}_1)^*$
identifies with the space
$\{\xi\in\mathfrak{g}^*:\xi|_{\mathfrak{p}_1}=0\}$, which itself
corresponds (through the invariant form $\langle\cdot,\cdot\rangle$) to the
nilpotent radical $\mathfrak{nil}(\mathfrak{p}_1)$. In this way the
cotangent bundle is given as 
%%$T^*(G/P)$ is
\[
T^*(G/P)=\{(\mathfrak{p}_1,\xi)\in
(G/P)\times\mathfrak{g}^*:\xi|_{\mathfrak{p}_1}=0\} \cong
\{(\mathfrak{p}_1,x)\in
(G/P)\times\mathfrak{g}:x\in\mathfrak{nil}(\mathfrak{p}_1)\}
\]
and the action of $G$ on $G/P$ gives rise to the moment map
\[\mu_{G/P}:T^*(G/P) \longrightarrow \mathfrak{g}^*, \quad (\mathfrak{p}_1,\xi) \longmapsto \xi
\quad\mbox{ or equivalently, } \quad 
(\mathfrak{p}_1,x) \longmapsto x.\]
%Given a closed subgroup $H\subset G$, the moment map related to the action of $H$ on $G/P$ (obtained by restriction of the $G$ action) is the map
%\[\mu_H:T^*(G/P)\to\mathfrak{h}^*,\ (\mathfrak{p}_1,\xi)\mapsto -\xi|_{\mathfrak{h}}\]
%obtained by composing $\mu_G$ with the restriction map $\mathfrak{g}^*\to\mathfrak{h}^*$.

In this paper, we consider a double flag variety of the form
$\mathfrak{X}=G/P\times K/Q$ for a (connected) symmetric subgroup
$K=G^\theta\subset G$, defined by an involution
$\theta\in\mathrm{Aut}(G)$, and a parabolic subgroup $Q\subset K$.
In this situation, we assume that the bilinear form
$\langle\cdot,\cdot\rangle$ is also $\theta$-invariant
(the bilinear form can always be chosen in this way).

We identify $ \dblFV $ with the collection of 
pairs of parabolic subalgebras 
$(\mathfrak{p}_1,\mathfrak{q}_1)$ where
$\mathfrak{p}_1\subset\mathfrak{g}$ is $G$-conjugate to $\mathfrak{p}$ and
$\mathfrak{q}_1\subset\mathfrak{k}:=\mathrm{Lie}(K)$ is 
$K$-conjugate to $\mathfrak{q}:=\mathrm{Lie}(Q)$.  
Then the cotangent
bundle is described as 
\begin{eqnarray*}
T^*\mathfrak{X} & = &
\{(\mathfrak{p}_1,\mathfrak{q}_1,\xi,\eta)\in(G/P)\times(K/Q)\times\mathfrak{g}^*\times\mathfrak{k}^*:\xi|_{\mathfrak{p}_1}=0,\
\eta|_{\mathfrak{q}_1}=0\} \\
 & \cong &
\{(\mathfrak{p}_1,\mathfrak{q}_1,x,y)\in(G/P)\times(K/Q)\times\mathfrak{g}\times\mathfrak{k}:x\in\mathfrak{nil}(\mathfrak{p}_1),\
y\in\mathfrak{nil}(\mathfrak{q}_1)\}
\end{eqnarray*} and the diagonal action
of $K$ on $\mathfrak{X}$ gives rise to a moment map
\[
\mu_\mathfrak{X}:T^*\mathfrak{X}\to\mathfrak{k}^*,\
(\mathfrak{p}_1,\mathfrak{q}_1,\xi,\eta)\mapsto
\xi|_{\mathfrak{k}}+\eta\]
or, equivalently,
\[
(\mathfrak{p}_1,\mathfrak{q}_1,x,y)\mapsto x^\theta+y
\]
with $x^\theta:=(x+\theta(x))/2$. The conormal variety
$\mathcal{Y}_\mathfrak{X}=\mu_\mathfrak{X}^{-1}(0)$ can be described
as
\[
\mathcal{Y}_\mathfrak{X}=
\{(\mathfrak{p}_1,\mathfrak{q}_1,\xi,\eta)\in(G/P)\times(K/Q)\times\mathfrak{g}^*\times\mathfrak{k}^*:\xi|_{\mathfrak{p}_1}=0,\
\eta|_{\mathfrak{q}_1}=0,\ \eta=-\xi|_{\mathfrak{k}}\}
\]
hence we get an isomorphism 
\begin{eqnarray*}
\mathcal{Y}_\mathfrak{X} & \cong &
\{(\mathfrak{p}_1,\mathfrak{q}_1,\xi)\in(G/P)\times(K/Q)\times\mathfrak{g}^*:\xi|_{\mathfrak{p}_1}=0,\
\xi|_{\mathfrak{q}_1}=0\} \\
 & \cong &
\{(\mathfrak{p}_1,\mathfrak{q}_1,x)\in(G/P)\times(K/Q)\times\mathfrak{g}:x\in\mathfrak{nil}(\mathfrak{p}_1),\
x^\theta\in\mathfrak{nil}(\mathfrak{q}_1)\}.
\end{eqnarray*}
In the following we often identify these isomorphic varieties without mention.

\subsection{Moment maps for rational representations}
\label{section-2.1.2}
Let $ V $ be a finite dimensional $ H $-module.  
%%In this section, we consider the situation where an algebraic group $H$ acts 
%%linearly on a vector space $V$.  
We denote by $(\eta,v)\mapsto \eta v$ the action of $\mathfrak{h}$ on $V$ obtained by differentiation. The cotangent space $T^*V=V\times V^*$ is endowed with a Hamiltonian action of $H$ given by $h\cdot(v,\xi)=(hv,\xi\circ h^{-1})$. The corresponding moment map is given by
\[
\mu_V:T^*V \longrightarrow \mathfrak{h}^*, \qquad 
(v,\xi) \longmapsto \big\{\eta\mapsto \xi(\eta v)\big\}.
\]
So the conormal variety $\mathcal{Y}_V$ is expressed as
\begin{equation}\label{4}
\mathcal{Y}_V=\mu_V^{-1}(0)=\{(v,\xi)\in V\times V^*:\forall \eta\in\mathfrak{h},\ \xi(\eta v)=0\}.
\end{equation}

\section{The Steinberg map}

\label{section-Steinberg}

Let us explain in more detail the construction of the Steinberg map, outlined in Section \ref{section-1.1}.
We follow the approach of \cite{Hinich-Joseph}, which is slightly different from Steinberg's original construction \cite{Steinberg-1976}.

The flag variety $\mathcal{B}=G/B$ can be identified with the set of all Borel subalgebras $\mathfrak{b}'\subset\mathfrak{g}$. As explained in Section \ref{section-2.1.1},
the cotangent bundle $T^*(\mathcal{B}\times\mathcal{B})$ and the moment map $\mu_{\mathcal{B}\times\mathcal{B}}$ can be described as
\[
\mu_{\mathcal{B}\times\mathcal{B}} : 
T^*(\mathcal{B}\times\mathcal{B}) = \{(\mathfrak{b}'_1,\mathfrak{b}'_2,x_1,x_2):x_i\in\mathfrak{nil}(\mathfrak{b}'_i)\} \longrightarrow \mathfrak{g}, \quad 
(\mathfrak{b}'_1,\mathfrak{b}'_2,x_1,x_2) \longmapsto x_1+x_2
\]
and the conormal variety is given by
\[
\mathcal{Y}=\{(\mathfrak{b}'_1,\mathfrak{b}'_2,x)\in \mathcal{B}\times\mathcal{B}\times\mathfrak{g}:x\in\mathfrak{nil}(\mathfrak{b}'_1)\cap \mathfrak{nil}(\mathfrak{b}'_2)\}.
\]
In this case, $\mathcal{Y}$ is often referred to as the {\em Steinberg variety}.

On the other hand, every $G$-orbit of $\mathcal{B}\times\mathcal{B}$ takes the form
$\mathcal{Z}_w:=G(\lie{b},\wconjn{w}{b})$ for a unique Weyl group element $w\in W$. Here $B$ is a (fixed) Borel subgroup containing the maximal torus $T$, and
we denote $\lie{b}=\mathrm{Lie}(B)$ and $\lie{n}=\mathfrak{nil}(\lie{b})$.
Hereafter, we use the notation $\wconjn{w}{b}=\mathrm{Ad}(w)(\lie{b})$
and $\wconjn{w}{n}=\mathrm{Ad}(w)(\lie{n})$.
The orbit $\mathcal{Z}_w$ gives rise to the conormal bundle
\begin{equation}
\label{conormal-bundle}
T^*_{\mathcal{Z}_w}(\mathcal{B}\times\mathcal{B})=\{(\mathfrak{b}'_1,\mathfrak{b}'_2,x)\in\mathcal{Y}:(\mathfrak{b}'_1,\mathfrak{b}'_2)\in\mathcal{Z}_w\}=G\cdot\{(\lie{b},\wconjn{w}{b},x):x\in\lie{n}\cap \wconjn{w}{n})\}\end{equation}
whose closure is an irreducible component of $\mathcal{Y}$ according to Proposition \ref{P1}.
Every component of $\mathcal{Y}$ is of this form.

The projection map $\pi:(\mathfrak{b}'_1,\mathfrak{b}'_2,x)\mapsto x$ is $G$-equivariant and closed.
It therefore maps $\overline{T^*_{\mathcal{Z}_w}(\mathcal{B}\times\mathcal{B})}$ onto the closure of a nilpotent orbit $\mathcal{O}_w\in\mathcal{N}/G$.
Note that $\mathcal{O}_w$ is also characterized as the unique nilpotent orbit which intersects the space
$ \lie{n}\cap \wconjn{w}{n} $ along a dense open subset.
The so-obtained map
\[
\mathrm{St}:W\cong
(\mathcal{B}\times\mathcal{B})/G\longrightarrow\mathcal{N}/G, \qquad w\longmapsto\mathcal{O}_w
\]
is the {\em Steinberg map} introduced in
Section \ref{section-1.1}.

Note that the bijection
$W\stackrel{\sim}{\to}(\mathcal{B}\times\mathcal{B})/G$, $w\mapsto
G(\lie{b},\wconjn{w}{b})$ is not canonical, as it depends on the choice of a Borel
subgroup $B\subset G$ which contains the maximal torus $T$
(though it becomes canonical if $W$ is replaced by the abstract Weyl group; see \cite[Proposition 3.1.29]{Chriss-Ginzburg}).

In the case of $G=\GLnC$, we always consider the
Steinberg map corresponding to the Borel subgroup $B=B_n^+$ of upper
triangular matrices. 
%%In the case of $\GLnC$, the
In this case, 
the Weyl group $W=\mathfrak{S}_n$ is the symmetric group, whereas the
nilpotent orbits $\mathcal{O}_\lambda\in\mathcal{N}/G$ are encoded
by the partitions $ \lambda \in \partitionsof{n} $.

\begin{notation}
For a partition
$\lambda=(\lambda_1,\ldots,\lambda_k)\in\partitionsof{n}$, we denote
by $\mathcal{O}_\lambda\subset\MatnC$ the subset
consisting of the nilpotent matrices which have $k$ Jordan blocks of
sizes $\lambda_1,\ldots,\lambda_k$. The subsets
$\mathcal{O}_\lambda$ (for $\lambda\in\partitionsof{n}$) are exactly
the nilpotent orbits in $\mathfrak{gl}_n$.
\end{notation}

In this way, we get a combinatorial incarnation of the Steinberg map, which we still denote by $\mathrm{St}:\mathfrak{S}_n\to\partitionsof{n}$ by abuse of notation.
This map can be described in terms of the Robinson-Schensted algorithm (see
Section \ref{section-2.2.3}).  
%%, as explained in the next statement.

\begin{theorem}[{\cite{Steinberg2}}]
\label{T-RS}
Assume that $G=\GLnC$ and $B=B_n^+$ is the subgroup of upper triangular matrices.
Then, for any permutation $w\in\mathfrak{S}_n$, we have
\[\mathrm{St}(w)=\mathrm{shape}(\rinsert(w(1),\ldots,w(n)))=\mathrm{shape}(\RS_i(w))\quad(i\in\{1,2\}).\]
For $ \lambda \in \partitionsof{n} $, the fiber $ \mathrm{St}^{-1}(\lambda) $ is the 
set of permutations $ w $ which correspond to a pair of standard tableaux in $ \STab(\lambda) \times \STab(\lambda) $ via the Robinson-Schensted correspondence.
\end{theorem}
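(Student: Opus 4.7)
The plan is to reduce the theorem to a combinatorial statement about Jordan types. Using the description \eqref{conormal-bundle} of the conormal bundle together with the $G$-equivariance of the projection $\pi$, one sees that $\pi(\overline{T^*_{\mathcal{Z}_w}(\mathcal{B}\times\mathcal{B})})=\overline{G\cdot(\lie{n}\cap\wconjn{w}{n})}$, so $\mathcal{O}_w$ is characterized as the unique nilpotent $G$-orbit meeting $\lie{n}\cap\wconjn{w}{n}$ in a dense subset. The theorem thus becomes equivalent to the statement that the Jordan type of a generic $x\in\lie{n}\cap\wconjn{w}{n}$ equals $\mathrm{shape}(\rinsert(w(1),\ldots,w(n)))$.

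For the fiber description, we would combine this with Spaltenstein's parametrization of the components of the Springer fiber $\mathcal{B}_x:=\{\mathfrak{b}'\in\mathcal{B}:x\in\mathfrak{nil}(\mathfrak{b}')\}$ by $\STab(\lambda)$, valid for any $x\in\mathcal{O}_\lambda$. By Proposition~\ref{P1}, the fiber $\pi^{-1}(x)\cap\mathcal{Y}$ equals $\mathcal{B}_x\times\mathcal{B}_x$, whose irreducible components are naturally labeled by pairs in $\STab(\lambda)\times\STab(\lambda)$. The dimension identity $\dim\mathcal{Y}=n(n-1)=\dim\mathcal{O}_\lambda+2\dim\mathcal{B}_x$, combined with the fact from Proposition~\ref{P1} that the components of $\mathcal{Y}$ are precisely the closures $\overline{T^*_{\mathcal{Z}_w}(\mathcal{B}\times\mathcal{B})}$ for $w\in\mathfrak{S}_n$, then yields a bijection
\[
\{w\in\mathfrak{S}_n:\mathrm{St}(w)=\lambda\}\;\longleftrightarrow\;\STab(\lambda)\times\STab(\lambda).
\]

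To identify this bijection with the Robinson--Schensted correspondence, we would argue by induction on $n$. Let $P_{n-1}\subset\GLnC$ be the parabolic subgroup stabilizing the line $\C e_n$; the associated projection $\mathfrak{gl}_n\twoheadrightarrow\mathfrak{gl}_{n-1}$, together with the natural maps of flag varieties, permits a recursive analysis of generic nilpotents in $\lie{n}\cap\wconjn{w}{n}$ under restriction of $w$ to $\mathfrak{S}_{n-1}$. On the combinatorial side, row insertion is itself defined recursively by inserting $w(n)$ into $\rinsert(w(1),\ldots,w(n-1))$. Matching the geometric recursion (change of generic Jordan type when extending $w'\in\mathfrak{S}_{n-1}$ to $w\in\mathfrak{S}_n$) against the combinatorial recursion (effect of the last insertion) then proves the first identity and refines the bijection above into exactly Robinson--Schensted.

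The main obstacle is this last matching step. While the existence of \emph{some} bijection between $\mathrm{St}^{-1}(\lambda)$ and $\STab(\lambda)\times\STab(\lambda)$ follows cleanly from dimension counting and Spaltenstein's theorem, showing that the bijection produced by Springer-fiber components coincides with Robinson--Schensted requires delicate bookkeeping, either through Steinberg's original flag-theoretic computation of generic rank data inside $\lie{n}\cap\wconjn{w}{n}$ (the \emph{rank matrix} encoding the Jordan type), or through a verification that both constructions intertwine with the action of simple transpositions (Knuth moves on the permutation side, $s_i$-edge moves on the geometric side). Either route is technical; fortunately it suffices to treat base cases (small $n$, and $w$ with $w(n)=n$) and extend via the recursion outlined above.
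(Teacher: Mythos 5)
The paper does not prove this result: it cites it to Steinberg's 1988 paper \cite{Steinberg2}, where the full argument is carried out.\ There is therefore no internal proof of Theorem~\ref{T-RS} in this paper to compare against, and the surrounding Section~\ref{section-Steinberg} only sets up the map $\mathrm{St}$ and its characterization via the generic Jordan type of $\lie{n}\cap\wconjn{w}{n}$, which your first paragraph correctly reproduces.

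That said, your proposal as written has a genuine gap precisely where you flag it.\ The dimension count $\dim\mathcal{Y}=n(n-1)=\dim\mathcal{O}_\lambda+2\dim\mathcal{B}_x$ together with Spaltenstein's parametrization gives the \emph{existence} of a bijection $\mathrm{St}^{-1}(\lambda)\leftrightarrow\STab(\lambda)\times\STab(\lambda)$, and with it the equinumerosity that underlies the statement, but it does not identify that bijection with the Robinson--Schensted one, which is the entire content of the theorem.\ The recursion you propose (pass from $\mathfrak{gl}_n$ to $\mathfrak{gl}_{n-1}$ by the projection associated with the line stabilizer, and compare with the last row insertion) is not obviously well-posed as stated: the generic Jordan type of $\lie{n}\cap\wconjn{w}{n}$ does not change in a simple inductive way under deleting the last letter of $w$, and controlling how it changes is exactly the technical heart of Steinberg's argument, which goes through an explicit computation of the rank function $d_{i,j}$ of a generic $x$ in terms of $w$ and its connection to longest unions of increasing subsequences (Greene's theorem, or equivalent rank bookkeeping).\ Without that combinatorial input, the ``matching step'' cannot be closed by appealing to base cases plus an unproved recursion, so the proposal does not yet constitute a proof.
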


\section{The symmetric pair of type AIII}

\label{section-2.2}

\subsection{Symmetric pair of type AIII (tube type)}
\label{section-2.4.1}
Here we denote $G=\GL_{2n}$ and
$\lie{g}=\lie{gl}_{2n}$. We consider an
involution $\theta\in\Aut(G)$ given by
\[\theta(g)=\iota g\iota^{-1}\qquad\mbox{where}\quad\iota=\left(\begin{array}{cc}
1_n & 0 \\ 0 & -1_n \end{array}\right).\] Its differential yields an involution
$\theta:\lie{g}\to\lie{g}$, which can be defined exactly in the same
way in matrix expression.

The symmetric subgroup $K:=G^\theta$ can be described as
\begin{eqnarray*}
K & = & \left\{\left(\begin{array}{cc} a & 0 \\ 0 & b
\end{array}\right):a,b\in\GLnC\right\}=\{g\in\GLnnC:g(V^+)=V^+,\ g(V^-)=V^-\} \\
 & \cong &
 \GLnC\times\GLnC\end{eqnarray*}
where we denote $V^+:=\C^n\times\{0\}^n$ and
$V^-:=\{0\}^n\times\C^n$ so that we get a direct sum decomposition
$\C^{2n}=V^+\oplus V^-$.

The Cartan decomposition 
$\lie{g}=\mathfrak{k}\oplus\mathfrak{s}$ is given by 
\[\mathfrak{k}:=\mathrm{Lie}(K)=\left\{\left(\begin{array}{cc} \alpha & 0 \\ 0 &
\beta
\end{array}\right):\alpha,\beta\in\MatnC\right\},\quad
\mathfrak{s}:=\lie{g}^{-\theta}=\left\{\left(\begin{array}{cc}
0 & \gamma
\\ \delta & 0
\end{array}\right):\gamma,\delta\in\MatnC\right\}.\]
We denote the projections along this direct sum decomposition as
\begin{align*}
& & \lie{g}\to\mathfrak{k},\  
x=\left(\begin{array}{cc} x_1 &
x_2 \\ x_3 & x_4 \end{array}\right)\mapsto
x^\theta:=\left(\begin{array}{cc} x_1 & 0 \\ 0 & x_4
\end{array}\right)\quad\mbox{and}\quad
& \lie{g}\to\mathfrak{s},\  
x\mapsto
x^{-\theta}:=\left(\begin{array}{cc} 0 & x_2 \\ x_3 & 0
\end{array}\right).
\end{align*}

\subsection{The nilpotent varieties $\mathcal{N}_\mathfrak{k}$ and $\mathcal{N}_\mathfrak{s}$}
\label{section-2.4.2}
We denote by $\mathcal{N}_\mathfrak{k}$ and $\mathcal{N}_\mathfrak{s}$ the closed subsets of nilpotent elements in $\mathfrak{k}$ and $\mathfrak{s}$, respectively.
Each one of these nilpotent sets has a finite number of $K$-orbits
that are parametrized as follows. We also indicate the closure relations among orbits.  

The decomposition of $\mathcal{N}_\mathfrak{k}$ into $K$-orbits is given by
\[\mathcal{N}_\mathfrak{k}=\bigcup_{(\lambda,\mu)\in\partitionsof{n}^2}\mathcal{O}_\lambda\times\mathcal{O}_\mu\]
where, by abuse of notation, $\mathcal{O}_\lambda\times\mathcal{O}_\mu$ stands for the set
of elements $\left(\begin{smallmatrix} \alpha & 0 \\ 0 & \beta \end{smallmatrix}\right)$
with $\alpha\in\mathcal{O}_\lambda$ and $\beta\in\mathcal{O}_\mu$.
Thus we get a bijective parametrization $\mathcal{N}_\mathfrak{k}/K\cong \partitionsof{n}^2$ of nilpotent $ K $-orbits.

We recall the definition of the {\em dominance order} on partitions of $n$ (or Young diagrams).
Let $ \numberofboxes{\lambda}{k} $ denote the number of boxes in the first $k$ columns of $\lambda$.
We set $\lambda\preceq\lambda'$ if
\[
\numberofboxes{\lambda}{k} \geq \numberofboxes{\lambda'}{k} \qquad \forall k\geq 1.\]
Then, we have
$\mathcal{O}_\lambda\subset\overline{\mathcal{O}_{\lambda'}}$ if and
only if $\lambda\preceq\lambda'$, and consequently
\[\mathcal{O}_\lambda\times\mathcal{O}_\mu\subset\overline{\mathcal{O}_{\lambda'}\times\mathcal{O}_{\mu'}}
\quad\mbox{if and only if}\quad \mbox{$\lambda\preceq\lambda'$ and $\mu\preceq\mu'$.}\]

For describing the $K$-orbits of $\mathcal{N}_\mathfrak{s}$, we need further notation.

\begin{definition}
\label{defOLambda}
{\rm (a)} A {\em signed Young diagram} (of signature $(n,n)$)
is a Young diagram of size $2n$ whose boxes are filled in with $n$ symbols $+$ and $n$ symbols $-$
so that:
\begin{itemize}
\item two consecutive boxes of the same row have opposite signs, so that each row is a sequence of alternating signs;
\item we identify two such fillings up to permutation of rows, in particular 
we can standardize the filling in such a way that,
among rows which have the same length, the rows starting with a $+$ are above those starting with a $-$ (if there is any).
\end{itemize}
Let $\signpartitionsof{2n}$ denote the set of signed Young diagrams of size $2n$.
For $\Lambda\in\signpartitionsof{2n}$, the shape of $\Lambda$ is an element of $\partitionsof{2n}$, denoted by $\mathrm{shape}(\Lambda)$.

{\rm (b)} We introduce the dominance order on signed Young diagrams. 
Let $ \numberofsigns{\Lambda}{k}{+} $ (resp. $ \numberofsigns{\Lambda}{k}{-} $) denote the number of $+$'s (resp. $-$'s)
contained in the first $k$ columns of $\Lambda$.
Given $\Lambda,\Lambda'\in\signpartitionsof{2n}$, we set $\Lambda\preceq\Lambda'$ if
\[
\numberofsigns{\Lambda}{k}{+} \geq \numberofsigns{\Lambda'}{k}{+}
\quad
\mbox{and}\
\quad
\numberofsigns{\Lambda}{k}{-} \geq \numberofsigns{\Lambda'}{k}{-}
\qquad
\forall k\geq 1.
\]
Note that
$\Lambda\preceq\Lambda' $ implies $ \mathrm{shape}(\Lambda)\preceq\mathrm{shape}(\Lambda')$ where the latter relation is the dominance ordering.
%%stands for the dominance order on partitions (or Young diagrams) introduced above.
%
For instance,
\[\tableau{+-+-+,-+-+,+-,+-,-}\not\preceq\ \tableau{+-+-+,+-+-,-+-,+-},\qquad \tableau{+-+-+,+-+-,+-,+-,-}\preceq\ \tableau{+-+-+,+-+-,-+-,+-}.\]

{\rm (c)}
Given a signed Young diagram $\Lambda$, we denote by $\mathfrak{O}_\Lambda$ the set of nilpotent elements $x\in\mathfrak{s}$ which have a Jordan basis $(\varepsilon_c)$ indexed by the boxes $c\in \Lambda$ such that
\begin{itemize}
\item the vector $\varepsilon_c$ belongs to the subspace $V^+$ (resp. $V^-$) whenever the box $c$ is filled in with a $+$ (resp. a $-$);
\item if $c$ belongs to the first column of $\Lambda$, then $x(\varepsilon_c)=0$; otherwise, then $x(\varepsilon_c)=\varepsilon_{c'}$, where $c'$ is the box on the left of $c$.
\end{itemize}
For instance, if 
\[
\Lambda=\tableau{-+-,+-,+}
\]
then we obtain
\[
x = \left(\begin{array}{ccc|ccc}
 & & & 0 & 1 & 0 \\
 & 0 & & 0 & 0 & 1 \\
 & & & 0 & 0 & 0 \\ \hline
 1 & 0 & 0 \\
 0 & 0 & 0 & & 0 \\
 0 & 0 & 0
\end{array}\right)\in\mathfrak{O}_\Lambda, 
\qquad
x : \begin{cases}
e^-_1 \to e^+_1 \to e^-_2 \to 0
\\
e^-_3 \to e^+_2 \to 0
\\
e^+_3 \to 0
\end{cases} 
\]
where $ \{ e^{\pm}_i \}_{1 \leq i \leq 3} $ is the standard basis of $ V^+ = \C^3 \oplus \{0\} $, resp.
$ V^- = \{ 0 \} \oplus \C^3 $. 
\end{definition}

\begin{proposition}[\cite{Ohta}]
\label{P2.2}
\begin{thmenumeralph}
\item The subsets $\mathfrak{O}_\Lambda$ (for $\Lambda\in\signpartitionsof{2n}$) are exactly the $K$-orbits of the nilpotent set $\mathcal{N}_\mathfrak{s}$.
\item The signed Young diagram $\Lambda$ such that $x\in\mathfrak{O}_\Lambda$ is also characterized by
\[
\numberofsigns{\Lambda}{k}{\pm} = \dim (\ker x^k) \cap V^\pm\quad \forall k\geq 1.
\]
\item $\mathfrak{O}_\Lambda\subset\overline{\mathfrak{O}_{\Lambda'}}$ if and only if $\Lambda\preceq\Lambda'$.
\end{thmenumeralph}
\end{proposition}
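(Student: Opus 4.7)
\medskip

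\noindent\textbf{Proof plan for Proposition \ref{P2.2}.}
The plan is to establish (a) and (b) simultaneously by constructing an adapted Jordan basis, and then to treat (c) using the characterization in (b) plus an argument based on elementary degenerations.

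The starting observation is that for $x\in\mathfrak{s}$ the anticommutation $\theta\circ x=-x\circ\theta$ holds, so $x$ swaps the eigenspaces $V^+$ and $V^-$ of $\theta$; in particular $x^k$ commutes with $\theta$ up to a sign, so each iterated kernel $\ker x^k$ is $\theta$-stable and therefore splits as $(\ker x^k\cap V^+)\oplus(\ker x^k\cap V^-)$. Working inductively down the filtration $\ker x\subset\ker x^2\subset\ldots\subset\ker x^{2n}=\C^{2n}$, I would choose graded complements at each step, i.e.\ subspaces of $\ker x^{k+1}$ complementary to $\ker x^k+x(\ker x^{k+2})$ that are spanned by vectors lying in $V^+\cup V^-$. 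Picking cyclic generators from these complements produces a Jordan basis $(\varepsilon_c)_{c\in\Lambda}$ indexed by the boxes of a signed Young diagram $\Lambda$: each Jordan chain is a row, and since $x$ flips the $\pm$-label, the signs along each row are forced to alternate. This proves that every $x\in\mathcal{N}_\mathfrak{s}$ lies in some $\mathfrak{O}_\Lambda$. Conversely, if $x$ and $x'$ both admit such bases indexed by the same $\Lambda$, the $\C$-linear map sending one basis to the other preserves both $V^+$ and $V^-$ by construction and hence lies in $\GL(V^+)\times\GL(V^-)=K$; this yields (a).

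For (b), using the basis produced above, the subspace $\ker x^k$ is spanned by the first $\min(k,\ell)$ vectors of each chain of length $\ell$, and intersecting with $V^\pm$ amounts to counting the $\pm$-signs appearing in the first $k$ entries of each row of $\Lambda$. Summing over rows gives exactly $\numberofsigns{\Lambda}{k}{\pm}$. This simultaneously shows that the signed Young diagram attached to $x$ in (a) depends only on $x$ (not on the chosen basis) and provides the intrinsic characterization stated in (b).

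For the ``only if'' direction of (c), the function $x\mapsto \dim(\ker x^k)\cap V^\pm$ is the kernel dimension of $x^k$ composed with the projection onto $V^\pm$, which is lower semicontinuous when viewed as a rank; hence the kernel dimension is upper semicontinuous on $\mathfrak{s}$. Combined with (b), this forces $\numberofsigns{\Lambda}{k}{\pm}\geq\numberofsigns{\Lambda'}{k}{\pm}$ for all $k$ whenever $\mathfrak{O}_\Lambda\subset\overline{\mathfrak{O}_{\Lambda'}}$, that is, $\Lambda\preceq\Lambda'$. The ``if'' direction is the main obstacle: one must produce actual degenerations realizing the order. The standard strategy is to reduce to elementary covering moves in $(\signpartitionsof{2n},\preceq)$, i.e.\ moves that shorten one row and lengthen another by a single box while preserving the sign-alternation constraint, and for each such move exhibit an explicit one-parameter family $x(t)\in\mathfrak{s}$ with $x(t)\in\mathfrak{O}_{\Lambda'}$ for $t\neq 0$ and $x(0)\in\mathfrak{O}_\Lambda$. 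Concretely, given the Jordan basis realizing $\Lambda'$, one perturbs two neighbouring Jordan chains by an explicit off-diagonal block depending on $t$ (an analogue of the classical construction for $\mathfrak{gl}_N$) and checks via the invariants in (b) that the $t=0$ limit has diagram $\Lambda$. The bookkeeping to ensure each covering move is realized and that the resulting perturbation genuinely stays inside $\mathfrak{s}$ (rather than only inside $\mathfrak{g}$) is where the work lies; once a covering list is available, transitivity of the closure ordering finishes the proof.
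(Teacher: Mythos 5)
The paper does not prove Proposition~\ref{P2.2}; it cites the result from Ohta's classification, so there is no internal argument to compare against. On the merits, your plan for parts (a) and (b) is sound: since $x\in\mathfrak{s}$ interchanges $V^+$ and $V^-$, the kernel filtration $\ker x\subset\ker x^2\subset\cdots$ is $\theta$-stable, so you can lift $\theta$-eigenbases from the graded pieces to assemble a Jordan basis indexed by a signed Young diagram with alternating signs along each chain; a map sending one such basis to another preserves $V^\pm$ and hence lies in $\GL(V^+)\times\GL(V^-)=K$. The formula in (b) follows immediately from the adapted basis and shows $\Lambda$ is an invariant of $x$. Your ``only if'' argument in (c) is also correct: $\dim(\ker x^k\cap V^\pm)=\dim V^\pm-\mathrm{rank}(x^k|_{V^\pm})$ is upper semicontinuous, and by (b) it equals $\numberofsigns{\Lambda}{k}{\pm}$, so passing to a limit in $\overline{\mathfrak{O}_{\Lambda'}}$ can only weakly increase these counts, giving $\Lambda\preceq\Lambda'$.

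The genuine gap is the ``if'' direction of (c), which you only outline. To complete it one must (i) characterize the covering relations of $(\signpartitionsof{2n},\preceq)$ and (ii) for each cover $\Lambda\prec\Lambda'$ produce a curve $t\mapsto x(t)\in\mathfrak{s}$ with $x(t)\in\mathfrak{O}_{\Lambda'}$ for $t\neq 0$ and $x(0)\in\mathfrak{O}_\Lambda$. You flag, correctly, that a naive $\mathfrak{gl}$-type perturbation can leave $\mathfrak{s}$: any admissible $x(t)$ must keep the off-diagonal block form $\left(\begin{smallmatrix}0&\gamma(t)\\\delta(t)&0\end{smallmatrix}\right)$, and the resulting signed diagram must be read off through the invariants of (b), not just the plain Jordan type. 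You do not exhibit these deformations nor identify the cover list, so as written the argument establishes (a), (b), and one inclusion in (c) but stops short of the full closure ordering.
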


\begin{remark}
\label{R2.1}
{\rm (a)}
It follows from Proposition \ref{P2.2} that the variety $\mathcal{N}_\mathfrak{s}$ is
not irreducible (contrary to $\mathcal{N}_\mathfrak{k}$). Indeed $\mathcal{N}_\mathfrak{s}$ has exactly two irreducible components which are the closures of the $K$-orbits corresponding to the horizontal signed Young diagrams
\[\tableau{+-+-\cdots}\qquad\mbox{and}\qquad\tableau{-+-+\cdots}\qquad\mbox{(of size $2n$)}.\]

{\rm (b)}
If $\Lambda\in\signpartitionsof{2n}$ is a signed Young
diagram with at most $n$ columns, then it is easy to see that we have $\Lambda\preceq
\Lambda_0$, $\Lambda\preceq\Lambda_+$, or $\Lambda\preceq\Lambda_-$,
where $\Lambda_0,\Lambda_+,\Lambda_-$ are the signed Young diagrams
given in Theorem \ref{T1}\,{\rm (a)} when $n$ is even and Theorem
\ref{T1}\,{\rm (b)} when $n$ is odd respectively.
Thus
\[\{x\in\mathcal{N}_\mathfrak{s}:x^n=0\}\subset\overline{\mathfrak{O}_{\Lambda_0}}\cup
\overline{\mathfrak{O}_{\Lambda_+}}\cup\overline{\mathfrak{O}_{\Lambda_-}}\]
if $ n \geq 2 $.  
%%for $\Lambda_0,\Lambda_+,\Lambda_-$ as in Theorem \ref{T1}.
\end{remark}

\part{A generalized Steinberg map arising from the action of a pair of Borel subgroups on the space of $n\times n$ matrices}

A partial permutation on the set $ [n] := \{ 1, 2, \cdots, n \} $
is an injective map from a (possibly empty) subset $ J \subset [n] $ to $ [n] $;
equivalently it can be viewed as a degenerate permutation matrix.
The partial permutations form a semigroup denoted by $\ppermutationsof{n}$.

In Section \ref{section-orbits}, we consider the simultaneous action of a
pair of Borel subgroups by (left and right) multiplication on the space of $n\times n$ matrices,
and we show that $\mathfrak{T}_n$ is a complete set of representatives
for the orbits.
Note that the considered action extends the Bruhat decomposition of the group
$\GLnC$, and in particular $ \ppermutationsof{n} $ naturally contains the group of permutations $ \permutationsof{n} $.

In Section \ref{section-Phi}, we use this action to give a bijective correspondence between
$ \ppermutationsof{n} $ and a set of pairs of tableaux with additional partition; see Theorems~\ref{T2}--\ref{thm:3.9}.
This correspondence naturally extends the original Robinson-Schensted correspondence for
permutations.

\section{Action of a pair of Borel subgroups on the space of $n\times n$ matrices}

\label{section-orbits}\label{section-3.1}

In this part of the paper, 
we consider two Borel subgroups $B_1,B_2$ of $\GLnC$.\ Let $\mathfrak{b}_1,\mathfrak{b}_2\subset\lie{gl}_n$ be the corresponding Borel subalgebras and let $\mathfrak{n}_1,\mathfrak{n}_2$ be their respective nilradicals. We assume that $B_1,B_2$ contain the standard torus of $\GLnC$.

Let us consider an action of the group $B_1\times B_2$ on the space of
$n\times n$ matrices $\Mat_n$, which
is given by
\[(b_1,b_2)\cdot x:=b_1xb_2^{-1} \qquad\forall (b_1,b_2)\in B_1\times B_2,\ \forall x\in \Mat_n.\]
As explained in Section \ref{section-2.1.2}, this action gives rise to a conormal variety $\mathcal{Y}_{\Mat_n}\subset T^*\Mat_n=\Mat_n\times\Mat_n^*$, which is stable by the Hamiltonian action of the group $B_1\times B_2$ on the cotangent bundle.

\begin{proposition}\label{P2}
Identifying $\Mat_n^*$ with $\Mat_n$ through the trace form $\langle x,y\rangle:=\mathrm{Tr}(xy)$,
the conormal variety $\mathcal{Y}_{\Mat_n}$ is identified with the variety
\[\mathcal{Y}_{\Mat_n}=\{(x,y)\in\Mat_n\times\Mat_n:xy\in\mathfrak{n}_1,\ yx\in\mathfrak{n}_2\}\]
endowed with the action of $B_1\times B_2$ obtained by restriction of the following action on $\Mat_n\times\Mat_n$:
\begin{equation}
\label{5}
(b_1,b_2)\cdot(x,y)=(b_1xb_2^{-1},b_2yb_1^{-1})\quad\forall (b_1,b_2)\in B_1\times B_2,\ \forall (x,y)\in\Mat_n\times\Mat_n.
\end{equation}
\end{proposition}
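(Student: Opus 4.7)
The plan is to apply the general moment-map formula (\ref{4}) to the group $H = B_1 \times B_2$ acting on $V = \Mat_n$ via $(b_1, b_2) \cdot x = b_1 x b_2^{-1}$, and then translate the resulting condition through the trace-form identification $\Mat_n^* \cong \Mat_n$.

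First, I will compute the derived action of $\mathfrak{b}_1 \oplus \mathfrak{b}_2$ on $\Mat_n$, which by direct differentiation equals $(\eta_1, \eta_2) \cdot x = \eta_1 x - x \eta_2$. Writing $\xi(z) = \mathrm{Tr}(zy)$ for the element $y \in \Mat_n$ corresponding to $\xi$, the condition from (\ref{4}) becomes $\mathrm{Tr}((\eta_1 x - x\eta_2)\, y) = 0$ for all $(\eta_1, \eta_2) \in \mathfrak{b}_1 \oplus \mathfrak{b}_2$. Cyclicity of the trace rearranges this to $\mathrm{Tr}(\eta_1 \cdot xy) - \mathrm{Tr}(\eta_2 \cdot yx) = 0$, and since $\eta_1, \eta_2$ vary independently, each summand must vanish separately, leaving $xy \in \mathfrak{b}_1^\perp$ and $yx \in \mathfrak{b}_2^\perp$ with respect to the trace form.

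Second, I will verify that $\mathfrak{b}_i^\perp = \mathfrak{n}_i$ under the trace form; this is the one subtle point, because the trace form pairs $E_{jk}$ with its transpose $E_{kj}$ rather than with itself, so one might naively expect the orthogonal complement of a Borel to be the \emph{opposite} nilradical. Since both $\mathfrak{b}_1$ and $\mathfrak{b}_2$ contain the standard torus $\mathfrak{h}$, each decomposes as $\mathfrak{b}_i = \mathfrak{h} \oplus \mathfrak{n}_i$ with $\mathfrak{n}_i = \bigoplus_{\alpha \in \Phi_i^+} \mathfrak{g}_\alpha$ for the positive system $\Phi_i^+$ determined by $\mathfrak{b}_i$. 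The trace form is non-degenerate on $\mathfrak{h}$ and pairs $\mathfrak{g}_\alpha$ non-degenerately with $\mathfrak{g}_{-\alpha}$, so $M \in \mathfrak{b}_i^\perp$ forces the diagonal part of $M$ and every component of $M$ in $\mathfrak{g}_{-\alpha}$ (for $\alpha \in \Phi_i^+$) to vanish, leaving precisely $M \in \mathfrak{n}_i$. This gives the two claimed conditions $xy \in \mathfrak{n}_1$ and $yx \in \mathfrak{n}_2$.

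Finally, I will check the action formula (\ref{5}). By the standard contragredient rule $h \cdot (x, \xi) = (h \cdot x, \xi \circ h^{-1})$, applied to $h = (b_1, b_2)$ for which $h^{-1} \cdot z = b_1^{-1} z b_2$, unfolding $\xi = y$ via the trace form yields $(h \cdot \xi)(z) = \mathrm{Tr}(b_1^{-1} z b_2 \cdot y) = \mathrm{Tr}(z \cdot b_2 y b_1^{-1})$, so the $\Mat_n$-representative of $h \cdot \xi$ is $b_2 y b_1^{-1}$, matching (\ref{5}).
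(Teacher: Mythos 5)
Your proposal is correct and follows the same route as the paper: compute the infinitesimal action, apply the explicit formula for the conormal condition, use cyclicity of the trace and independence of $\eta_1,\eta_2$ to split it, identify $\mathfrak{b}_i^\perp = \mathfrak{n}_i$, and unwind the contragredient action via the trace form. Your root-space verification that $\mathfrak{b}_i^\perp = \mathfrak{n}_i$ (rather than the opposite nilradical) is a slightly more careful justification of a step the paper states without comment, but the argument is the same.
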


\begin{proof}
While identifying the cotangent bundle $T^*\Mat_n=\Mat_n\times\Mat_n^*$ with the space $\Mat_n\times\Mat_n$ through the trace form,
the action of $B_1\times B_2$ on $T^*\Mat_n$ translates into the action on $\Mat_n\times\Mat_n$ given in (\ref{5}), because
for $y\in\Mat_n\cong\Mat_n^*$ and $(b_1,b_2)\in B_1\times B_2$ we have
\[\langle y,(b_1,b_2)^{-1}\cdot z\rangle=\mathrm{Tr}(yb_1^{-1}zb_2)=\mathrm{Tr}(b_2yb_1^{-1}z)=\langle b_2yb_1^{-1},z\rangle\quad \forall z\in \Mat_n.\]
By differentiating the action of $B_1\times B_2$ on $\Mat_n$ 
we obtain the infinitesimal action of Lie algebras:
\[(\beta_1,\beta_2)\cdot x=\beta_1x-x\beta_2\quad\forall (\beta_1,\beta_2)\in \mathfrak{b}_1\times\mathfrak{b}_2,\ \forall x\in\Mat_n.\]
Then by (\ref{4}), for any $(x,y)\in\Mat_n\times\Mat_n\cong\Mat_n\times\Mat_n^*$, we have
\begin{eqnarray*}
(x,y)\in\mathcal{Y}_{\Mat_n} & \iff & \forall (\beta_1,\beta_2)\in\mathfrak{b}_1\times\mathfrak{b}_2,\ \langle (\beta_1,\beta_2)\cdot x,y\rangle=0 \\
 & \iff & \forall (\beta_1,\beta_2)\in\mathfrak{b}_1\times\mathfrak{b}_2,\ \mathrm{Tr}(\beta_1xy-x\beta_2y)=0 \\
 & \iff & \forall \beta_1\in\mathfrak{b}_1,\ \forall\beta_2\in\mathfrak{b}_2,\ \mathrm{Tr}(\beta_1xy)=\mathrm{Tr}(yx\beta_2)=0 \\
 & \iff & xy\in\mathfrak{b}_1^\perp(=\mathfrak{n}_1)\quad\mbox{and}\quad yx\in\mathfrak{b}_2^\perp(=\mathfrak{n}_2)
\end{eqnarray*}
where the notation $\perp$ refers to the orthogonal space with respect to the trace form on $\Mat_n$. The proof of the proposition is complete.
\end{proof}

The action of $B_1\times B_2$ on the space
$\Mat_n$ has a finite number of orbits, as shown by the
following statement. 
(This also follows from the general theory of spherical varieties, knowing that 
this action
is the restriction of an action of $\GLnC\times\GLnC$, and it has
an open dense orbit by virtue of the Bruhat decomposition.)
The orbits are parametrized by the set of
so-called partial permutations. In Corollary \ref{C1} we deduce a
description of the irreducible components of the conormal variety.

\begin{definition}
\label{D1} We call a matrix $\tau\in\Mat_n$ a {\em partial permutation}
if each row (resp. column) of $\tau$ has at most one nonzero entry,
equal to 1. (Equivalently $\tau$ is obtained from a permutation
matrix by erasing some $1$'s, replaced by $0$'s.) Let
$\mathfrak{T}_n\subset\Mat_n$ denote the subset of partial
permutations.
\end{definition}

\begin{proposition}
\label{P3} The set $\mathfrak{T}_n$ of partial permutations is a
complete set of representatives of the $B_1\times B_2$-orbits in
$\Mat_n$. In other words every $B_1\times B_2$-orbit of
$\Mat_n$ is of the form
\[\mathbb{O}_\tau:=B_1\tau B_2=\{b_1\tau b_2:b_1\in B_1,\ b_2\in B_2\}\]
for a unique partial permutation $\tau\in\mathfrak{T}_n$.
\end{proposition}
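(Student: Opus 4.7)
The plan is to reduce first to the case $B_1 = B_2 = B_n^+$, the subgroup of upper triangular matrices, and then handle existence and uniqueness separately. Since both $B_1$ and $B_2$ contain the standard torus, one can write $B_i = w_i B_n^+ w_i^{-1}$ for permutation matrices $w_i \in \mathfrak{S}_n$. The substitution $x \mapsto w_1^{-1} x w_2$ gives a bijection between $B_1 \times B_2$-orbits and $B_n^+ \times B_n^+$-orbits on $\mathrm{Mat}_n$, and it permutes the set $\mathfrak{T}_n$ of partial permutations among themselves. So it suffices to prove the statement for the pair $(B_n^+, B_n^+)$.

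For existence, one uses Gaussian elimination. Left multiplication by $b_1 \in B_n^+$ adds multiples of later rows to earlier rows and rescales rows; right multiplication by $b_2^{-1}$ with $b_2 \in B_n^+$ adds multiples of earlier columns to later columns and rescales columns. Given $x \neq 0$, let $j_1$ be the smallest index of a non-zero column of $x$ and $i_1$ the largest index with $x_{i_1, j_1} \neq 0$. Scale row $i_1$ so that $x_{i_1, j_1} = 1$, use row additions of row $i_1$ to rows $i < i_1$ to zero out column $j_1$ above the pivot, then use column additions of column $j_1$ to columns $j > j_1$ to zero out row $i_1$ to the right of the pivot. The resulting $x^{(1)}$ has column $j_1 = e_{i_1}$, row $i_1 = e_{j_1}^T$, and all columns of index $<j_1$ equal to zero. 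One iterates on the remaining rows and columns: the subsequent pivot choices $(i_k, j_k)$ with $j_k > j_{k-1}$ avoid disturbing earlier pivots, because the relevant interfering entries (namely $x^{(k-1)}_{i_1, j_k}$ in row $i_1$ and the column-$j_1$ entries of later pivot columns) are already zero. After finitely many steps one reaches a partial permutation in the orbit of $x$.

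For uniqueness, define for $x \in \mathrm{Mat}_n$ and $1 \leq i, j \leq n$
\[
\tilde{r}_{ij}(x) := \mathrm{rank}\, (\text{submatrix of } x \text{ with rows } \{i, i+1, \ldots, n\} \text{ and columns } \{1, 2, \ldots, j\}),
\]
with the conventions $\tilde{r}_{n+1, j} = \tilde{r}_{i, 0} = 0$. This quantity is a $B_n^+ \times B_n^+$-invariant, since under $x \mapsto b_1 x b_2^{-1}$ the submatrix gets multiplied on the left and right by the invertible bottom-right and top-left blocks of $b_1$ and $b_2^{-1}$ respectively. For a partial permutation $\tau$, the counting identity
\[
\tau_{ij} = \tilde{r}_{ij}(\tau) - \tilde{r}_{i+1, j}(\tau) - \tilde{r}_{i, j-1}(\tau) + \tilde{r}_{i+1, j-1}(\tau)
\]
shows that the numbers $\tilde{r}_{ij}(\tau)$ determine $\tau$ entrywise. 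Hence two partial permutations lying in the same orbit must be equal.

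The main obstacle is the bookkeeping in the inductive step of the existence argument: one needs to verify carefully that successive pivot placements do not disturb the already-constructed partial-permutation structure. This reduces to checking that after each pivot the entries that could be affected by subsequent row/column operations are already zero, which follows from the strict ordering $j_1 < j_2 < \cdots$ of pivot columns and the fact that the previously cleaned rows and columns look like standard basis vectors.
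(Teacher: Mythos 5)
Your proof is correct and follows essentially the same route as the paper's: reduce to $B_1=B_2=B_n^+$ by conjugation, establish existence via Gaussian elimination, and establish uniqueness via the $B_n^+\times B_n^+$-invariant ranks of the lower-left submatrices (your $\tilde r_{ij}$ is the paper's $d_{i,j}$). You have just filled in more explicit detail (pivot selection, non-interference of operations, the inclusion-exclusion identity recovering $\tau$ from $\tilde r_{ij}$) than the paper chooses to record.
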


\begin{proof}
By assumption $B_1$ and $B_2$ are Borel subgroups of
$\GLnC$ which contain the standard torus of
diagonal matrices. Thus there are permutations
$\sigma_1,\sigma_2\in\mathfrak{S}_n$ such that
$B_i=\sigma_iB\sigma_i^{-1}$ for $i\in\{1,2\}$, where
$B\subset\GLnC$ is the Borel subgroup of
upper-triangular matrices. Since $\tau\mapsto
\sigma_1\tau\sigma_2^{-1}$ is a bijection on the set of partial
permutations, we may assume without loss of generality that
$B_1=B_2=B$.

A matrix remains in the same $B\times B$-orbit as $a\in\Mat_n$ whenever it is obtained from $a\in\Mat_n$ by adding to a given row (resp. column) a sum of rows (resp. columns) situated below it (resp. on its left) or by multiplying a row (a column) by a nonzero scalar.\ Applying Gauss elimination, it follows that every orbit $BaB$ contains a matrix of the form $\tau\in\mathfrak{T}_n$.

We observe that the maps $a\mapsto d_{i,j}(a):=\mathrm{rank}\,(a_{k,\ell})_{i\leq  k\leq n,\, 1\leq \ell\leq j}$ (for $i,j\in\{1,\ldots,n\}$) are constant on the $B\times B$-orbits. Since we have $d_{i,j}(\tau)=d_{i,j}(\tau')$ for any $(i,j)$ only if $\tau=\tau'$, it follows that every orbit contains exactly one representative of the form $\tau\in\mathfrak{T}_n$.
\end{proof}

\begin{corollary}
\label{C1} The conormal variety $\mathcal{Y}_{\Mat_n}$ of
Proposition \ref{P2} is equidimensional and its irreducible
components are parametrized by the partial permutations.\ More
precisely every irreducible component is of the form
\[\mathcal{Y}_\tau:=\overline{T_{\mathbb{O}_\tau}^*\Mat_n}=\overline{(B_1\times B_2)\cdot\{(\tau,y):y\in\Mat_n,\ \tau y\in\mathfrak{n}_1,\ y\tau\in\mathfrak{n}_2\}}\]
for a unique $\tau\in\mathfrak{T}_n$.
\end{corollary}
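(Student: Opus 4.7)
The plan is to obtain Corollary \ref{C1} as a direct consequence of the general statement Proposition \ref{P1} applied to the action of $H = B_1 \times B_2$ on the smooth variety $X = \Mat_n$, combined with the orbit classification in Proposition \ref{P3}. The only work beyond invoking these results is the explicit identification of the conormal bundle $T^*_{\mathbb{O}_\tau}\Mat_n$ in the form stated.

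First, by Proposition \ref{P3}, the action of $B_1\times B_2$ on $\Mat_n$ has finitely many orbits $\mathbb{O}_\tau = B_1\tau B_2$ indexed by $\tau\in\mathfrak{T}_n$. Since $\Mat_n$ is smooth, Proposition \ref{P1} applies: the conormal variety $\mathcal{Y}_{\Mat_n}$ is equidimensional of dimension $\dim \Mat_n = n^2$, and its irreducible components are exactly the closures $\overline{T^*_{\mathbb{O}_\tau}\Mat_n}$ as $\tau$ ranges over $\mathfrak{T}_n$. This already gives the parametrization and equidimensionality assertions.

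It then remains to write $T^*_{\mathbb{O}_\tau}\Mat_n$ in the announced form. By $(B_1\times B_2)$-equivariance of the cotangent bundle and of the conormal bundle construction, $T^*_{\mathbb{O}_\tau}\Mat_n$ is the $(B_1\times B_2)$-saturation of its fiber above the representative point $\tau$, where the group acts on $T^*\Mat_n \cong \Mat_n\times\Mat_n$ by the formula \eqref{5}. The fiber of $T^*_{\mathbb{O}_\tau}\Mat_n$ over $\tau$ consists of those $y\in\Mat_n\cong\Mat_n^*$ which annihilate $T_\tau\mathbb{O}_\tau$ under the trace pairing; and $T_\tau\mathbb{O}_\tau$ is the image of the infinitesimal orbit map $(\beta_1,\beta_2)\mapsto \beta_1\tau - \tau\beta_2$ for $(\beta_1,\beta_2)\in\mathfrak{b}_1\times\mathfrak{b}_2$. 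Repeating verbatim the trace-form computation used in the proof of Proposition \ref{P2}, the orthogonality condition $\mathrm{Tr}((\beta_1\tau-\tau\beta_2)y)=0$ for all $(\beta_1,\beta_2)$ is equivalent to $\tau y\in\mathfrak{b}_1^\perp = \mathfrak{n}_1$ and $y\tau\in\mathfrak{b}_2^\perp=\mathfrak{n}_2$. Hence the fiber of $T^*_{\mathbb{O}_\tau}\Mat_n$ over $\tau$ is precisely $\{y\in\Mat_n : \tau y\in\mathfrak{n}_1,\ y\tau\in\mathfrak{n}_2\}$, and the full conormal bundle is its $(B_1\times B_2)$-orbit, as claimed.

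No step presents a real obstacle: the argument is a routine assembly of Propositions \ref{P1}, \ref{P2}, and \ref{P3}. The only point to be careful about is to match the description of the tangent space $T_\tau\mathbb{O}_\tau$ with the trace-form computation, which is exactly the content already verified in the proof of Proposition \ref{P2}; so the proof reduces essentially to invoking the previous results and using equivariance to transport the fiber-level description to the whole bundle.
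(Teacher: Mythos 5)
Your proof is correct and takes essentially the same route as the paper: invoking Propositions~\ref{P1}, \ref{P2}, and \ref{P3}, with your additional fiber-level computation being exactly the trace-form argument already present inside the proof of Proposition~\ref{P2}, transported by equivariance. The paper simply leaves these last details implicit.
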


\begin{proof}
This follows from Propositions \ref{P1}, \ref{P2}, and \ref{P3}.
\end{proof}

\section{Generalized Steinberg map and Robinson-Schensted correspondence for partial permutations}

\label{section-Phi}

The setting and the notation are the same as in Section \ref{section-orbits}, except that we assume for simplicity
\[
B_1=B_2=B\qquad\mbox{and}\qquad
\lie{n}_1=\lie{n}_2=\lie{n}
\]
where $B\subset \GL_n$ is the Borel subgroup of upper triangular matrices and $\mathfrak{n}\subset\Mat_n$ is the subspace of strictly upper triangular matrices.
The goal of this section is to define and calculate a generalized Steinberg map on the set of partial permutations and, to this end, it is indeed preferable to standardize the notation.

\subsection{The map $\Phi$ on partial permutations}\label{section-3.3}

We consider the map
\[
\varphi=(\varphi_1,\varphi_2):\mathcal{Y}_{\Mat_n} \longrightarrow \lie{n}{\times}\lie{n}, \qquad 
(x,y) \longmapsto (xy, yx).
\]
Note that  $\varphi$ is $B\times B$-equivariant, where $B\times B$ acts on $\lie{n}{\times}\lie{n}$ by the adjoint action on each factor.

For any irreducible component $\mathcal{Y}_\tau\subset\mathcal{Y}_{\Mat_n}$, the set $\varphi(\mathcal{Y}_\tau)\subset\lie{n}{\times}\lie{n}$ is irreducible.
Therefore there exist a pair of nilpotent orbits $\mathcal{O}_\lambda$ and $\mathcal{O}_\mu$ of $\Mat_n$ which intersect $\varphi(\mathcal{Y}_\tau)$ densely.
In other words, we have
\[\overline{(\GLnC\times \GLnC)\cdot\varphi(\mathcal{Y}_\tau)}=\overline{\mathcal{O}_\lambda}\times\overline{\mathcal{O}_\mu}.\]
This yields a map
\[
\Phi : \mathfrak{T}_n \cong \mathrm{Irr}(\mathcal{Y}_{\Mat_n}) \longrightarrow \partitionsof{n}{\times}\partitionsof{n}, \qquad 
\tau \longmapsto (\lambda,\mu).
\]
Let us denote $\Phi_1(\tau)=\lambda$ and $\Phi_2(\tau)=\mu$ for the first and the second component of $\Phi$, respectively.  
We call maps $ \Phi, \Phi_1, \Phi_2 $ \emph{generalized Steinberg maps}.

The next lemma immediately follows from the definition.

\begin{lemma}\label{L1}
With the above notation, $\mathcal{O}_\lambda$ (resp. $\mathcal{O}_\mu$) is characterized as the nilpotent orbit which meets any of the sets
\begin{eqnarray*}
 & \varphi_1(\mathcal{Y}_\tau),\quad \varphi_1(T^*_{\mathbb{O}_\tau}\Mat_n),\quad \{\tau y:y\in\Mat_n\mbox{\rm \ such that }(\tau y,y\tau)\in\lie{n}{\times}\lie{n}\} \\
\mbox{(resp.} & \varphi_2(\mathcal{Y}_\tau),\quad \varphi_2(T^*_{\mathbb{O}_\tau}\Mat_n),\quad \{y\tau:y\in\Mat_n\mbox{\rm \ such that }(\tau y,y\tau)\in\lie{n}{\times}\lie{n}\} & \mbox{)}
\end{eqnarray*}
along dense open subsets.
\end{lemma}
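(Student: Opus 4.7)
The plan is to unfold the definition of $\Phi$ given in Section~\ref{section-3.3} and to propagate the density statement along the chain
\[
\{\tau\}\times F_\tau \;\subset\; T^*_{\mathbb{O}_\tau}\Mat_n \;\subset\; \mathcal{Y}_\tau,
\]
where $F_\tau:=\{y\in\Mat_n:\tau y\in\mathfrak{n},\ y\tau\in\mathfrak{n}\}$ denotes the fiber of $\mathcal{Y}_{\Mat_n}$ over $\tau$. The last inclusion is open and dense by Corollary~\ref{C1}, and by the same corollary the conormal bundle itself is the $(B\times B)$-saturation of $\{\tau\}\times F_\tau$.

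First I would invoke the definition of $\Phi(\tau)=(\lambda,\mu)$ to produce a dense open subset $U\subseteq\mathcal{Y}_\tau$ with $\varphi(U)\subseteq\mathcal{O}_\lambda\times\mathcal{O}_\mu$. Projecting to the first factor, $\mathcal{O}_\lambda\cap\varphi_1(\mathcal{Y}_\tau)$ contains $\varphi_1(U)$ and is therefore dense in the irreducible constructible set $\varphi_1(\mathcal{Y}_\tau)$; uniqueness of $\mathcal{O}_\lambda$ is automatic since an irreducible subset of the nilpotent cone can meet at most one nilpotent orbit along a dense open set. For the second set, since $T^*_{\mathbb{O}_\tau}\Mat_n$ is open and dense in $\mathcal{Y}_\tau$, the morphism $\varphi_1$ sends it to a dense subset of $\varphi_1(\mathcal{Y}_\tau)$, and the density of the intersection with $\mathcal{O}_\lambda$ transfers without further work.

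For the third set $\tau F_\tau:=\{\tau y:y\in F_\tau\}$, I would combine Corollary~\ref{C1} with the direct matrix computation
\[
\varphi_1\bigl(b_1\tau b_2^{-1},\,b_2 y b_1^{-1}\bigr) \;=\; b_1(\tau y)b_1^{-1},
\]
which shows that $\varphi_1(T^*_{\mathbb{O}_\tau}\Mat_n)=B\cdot(\tau F_\tau)$ where $B$ acts by conjugation on $\mathfrak{n}$. Since $\mathcal{O}_\lambda$ is $\GL_n$-stable and already meets $B\cdot(\tau F_\tau)$ densely, we obtain $\tau F_\tau\subseteq\overline{\mathcal{O}_\lambda}$, so $\mathcal{O}_\lambda\cap\tau F_\tau$ is open in the irreducible affine subspace $\tau F_\tau$; to see that it is non-empty, take any $x\in\mathcal{O}_\lambda\cap B\cdot(\tau F_\tau)$ and apply a suitable $b^{-1}\in B$. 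Irreducibility of $\tau F_\tau$ then yields density.

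The only delicate point is this last transfer of density from the $B$-saturation back down to the unsaturated set $\tau F_\tau$, which is resolved by the irreducibility argument just sketched. The corresponding statement for $\mathcal{O}_\mu$ and the three analogous sets built from $\varphi_2$ is completely symmetric, using the parallel identity $\varphi_2(b_1\tau b_2^{-1},\,b_2 y b_1^{-1})=b_2(y\tau)b_2^{-1}$.
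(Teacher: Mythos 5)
Your proof is correct and follows the only natural route: the paper itself offers no argument at all, simply asserting that the lemma is immediate from the definition of $\Phi$, and your proposal is precisely the elaboration of that assertion. The chain of inclusions $\{\tau\}\times F_\tau \subset T^*_{\mathbb{O}_\tau}\Mat_n \subset \mathcal{Y}_\tau$, the equivariance identity $\varphi_1(T^*_{\mathbb{O}_\tau}\Mat_n)=B\cdot(\tau F_\tau)$, and the conjugation trick to transfer density from the $B$-saturation down to $\tau F_\tau$ are exactly the steps the authors have in mind when they say ``immediately follows.''
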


\subsection{The map $\Phi$ on permutations}\label{section-R1}

The decomposition of $\Mat_n$ into $B\times B$-orbits is evidently an extension of the Bruhat decomposition of $G=\GLnC$:
\[G=\bigsqcup_{\sigma\in\mathfrak{S}_n}B\sigma B\subset\Mat_n=\bigsqcup_{\tau\in\mathfrak{T}_n}B\tau B.\]
Here the permutations $\sigma\in\mathfrak{S}_n$ are viewed as
permutation matrices, in particular they are invertible matrices.
Thus, for $(x,y)\in T^*_{B\sigma B}\Mat_n$, 
the element $x\in B\sigma B\subset G$ is an invertible matrix, hence the matrices $xy$ and $yx=x^{-1}(xy)x$ are $ G $-conjugate.  
%%Therefore they belong to the same nilpotent $G$-orbit. 
This readily implies that they generate the same nilpotent orbit, and we get
$\Phi_1(\sigma)=\Phi_2(\sigma)$ whenever $\sigma$ is a permutation.  
In contrast, for a partial permutation $ \tau $, we have 
$\Phi_1(\tau)\not=\Phi_2(\tau)$ in general.  See Example \ref{example-1} below.

Let us compare the conormal bundles for 
the diagonal action of $G$ on $ \mathcal{B} \times \mathcal{B} $ 
and for the action of $ B \times B $ on $ G $ via the left and right multiplications.

For $ \sigma \in \mathfrak{S}_n $, if we put 
$\mathbb{O}_\sigma=B\sigma B\subset
G \subset\Mat_n$ and 
$\mathcal{Z}_\sigma:=G\cdot(\lie{b},\wconjn{\sigma}{b})\subset
\mathcal{B}\times\mathcal{B}$, the corresponding conormal bundles are 
\begin{eqnarray*}
T_{\mathbb{O}_\sigma}^*G=T_{\mathbb{O}_\sigma}^*\Mat_n & = & (B\times B)\cdot \{(\sigma,y):\sigma y\in\lie{n},\ y\sigma\in\lie{n}\} \\
 & = & (B\times B)\cdot \{(\sigma,y):\sigma y\in\lie{n}\cap\wconjn{\sigma}{n}\}
\end{eqnarray*}
and 
\[T_{\mathcal{Z}_\sigma}^*(\mathcal{B}\times\mathcal{B})=G\cdot \{(\lie{b},\wconjn{\sigma}{b},z):z\in\lie{n}\cap\wconjn{\sigma}{n}\}\]
(see (\ref{conormal-bundle})).  
The fibers are the very same and we get 
\[
\closure{G\cdot(\varphi_1(T^*_{\mathbb{O}_\sigma}G))} 
= \closure{G\cdot(\varphi_2(T^*_{\mathbb{O}_\sigma}G))} 
= \closure{G\cdot(\mathfrak{n}\cap\wconjn{\sigma}{n})} 
= \closure{\pi(T_{\mathcal{Z}_\sigma}^*(\mathcal{B}\times\mathcal{B}))}
\]
with $\pi$ the projection given in Section \ref{section-Steinberg}.  
Thus these sets have the same dense nilpotent orbit $ \calorbit_{\lambda} $.  
We summarize this into the following proposition.

\begin{proposition}
\label{proposition-oldequation6}
For any permutation $\sigma\in\mathfrak{S}_n$, we have
\[
\Phi(\sigma)=(\mathrm{St}(\sigma),\mathrm{St}(\sigma)), 
\]
where $ \mathrm{St} : \mathfrak{S}_n \to \partitionsof{n} $ is the Steinberg map for $ \GLnC $.
\end{proposition}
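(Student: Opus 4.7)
My plan is to follow the comparison between the two conormal constructions that is already outlined in the paragraph preceding the proposition, and promote it to a proof.

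First, I would reduce from $\mathcal{Y}_\sigma = \overline{T^*_{\mathbb{O}_\sigma}\Mat_n}$ to the conormal bundle itself: since the generalized Steinberg map $\Phi$ is defined via the unique dense pair of nilpotent orbits in $\varphi(\mathcal{Y}_\sigma)$, and since $T^*_{\mathbb{O}_\sigma}\Mat_n$ is an open dense subset of $\mathcal{Y}_\sigma$ (Proposition~\ref{P1}), it is enough to identify the dense nilpotent orbit meeting $\varphi_i(T^*_{\mathbb{O}_\sigma}\Mat_n)$ for $i=1,2$, as recorded in Lemma~\ref{L1}. Using the $B\times B$-equivariance of $\varphi$ and the fact that $(B\times B)\cdot\sigma = \mathbb{O}_\sigma$, the fiber over $\sigma$ determines the image up to $G$-conjugation:
\[
\overline{G\cdot\varphi_i\bigl(T^*_{\mathbb{O}_\sigma}\Mat_n\bigr)} = \overline{G\cdot\{\varphi_i(\sigma,y):\sigma y\in\mathfrak{n},\ y\sigma\in\mathfrak{n}\}}.
\]

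Second, I would exploit invertibility of $\sigma$. Setting $z:=\sigma y$, we have $y = \sigma^{-1}z$ and $y\sigma = \sigma^{-1}z\sigma = \mathrm{Ad}(\sigma^{-1})(z)$. Hence the pair of conditions $\sigma y\in\mathfrak{n}$ and $y\sigma\in\mathfrak{n}$ is equivalent to the single condition $z\in\mathfrak{n}\cap\wconjn{\sigma}{n}$. Under this substitution, $\varphi_1(\sigma,y)=z$ while $\varphi_2(\sigma,y)=\mathrm{Ad}(\sigma^{-1})(z)$, so $\varphi_1$ and $\varphi_2$ take $G$-conjugate values everywhere on the conormal bundle. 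Consequently the $G$-closures of their images coincide, already giving $\Phi_1(\sigma)=\Phi_2(\sigma)$, and both equal $\overline{G\cdot(\mathfrak{n}\cap\wconjn{\sigma}{n})}$.

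Third, I would invoke the description of the Steinberg map from Section~\ref{section-Steinberg}. By \eqref{conormal-bundle} and the discussion immediately below it, the image $\overline{\pi(T^*_{\mathcal{Z}_\sigma}(\mathcal{B}\times\mathcal{B}))}$ equals precisely $\overline{G\cdot(\mathfrak{n}\cap\wconjn{\sigma}{n})}$, and by definition it is $\overline{\mathcal{O}_{\mathrm{St}(\sigma)}}$. Combining this with the identification obtained in the previous step yields
\[
\overline{G\cdot\varphi_i\bigl(T^*_{\mathbb{O}_\sigma}\Mat_n\bigr)} = \overline{\mathcal{O}_{\mathrm{St}(\sigma)}}\qquad (i=1,2),
\]
so the unique dense nilpotent orbit in each image is $\mathcal{O}_{\mathrm{St}(\sigma)}$, proving $\Phi(\sigma)=(\mathrm{St}(\sigma),\mathrm{St}(\sigma))$.

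I do not expect any serious obstacle here: the proposition is essentially a bookkeeping statement compiling the facts that (i) $\sigma$ is invertible, so the two projections $xy$ and $yx$ coincide up to $G$-conjugation, and (ii) the fiber description of $T^*_{\mathbb{O}_\sigma}\Mat_n$ over $\sigma$ matches the fiber of the Steinberg conormal bundle $T^*_{\mathcal{Z}_\sigma}(\mathcal{B}\times\mathcal{B})$ over $(\mathfrak{b},\wconjn{\sigma}{b})$. The only point worth being a bit careful about is the passage from $\mathcal{Y}_\sigma$ (a closure) to $T^*_{\mathbb{O}_\sigma}\Mat_n$ when reading off the dense orbit; this is handled cleanly by Lemma~\ref{L1}, which guarantees that the dense nilpotent orbit in $\varphi_i(\mathcal{Y}_\sigma)$ coincides with the one in $\varphi_i(T^*_{\mathbb{O}_\sigma}\Mat_n)$.
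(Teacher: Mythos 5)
Your proposal is correct and follows essentially the same route as the paper: you use the invertibility of $\sigma$ to show that $\varphi_1$ and $\varphi_2$ take $G$-conjugate values (the paper phrases this as $yx = x^{-1}(xy)x$ for $x\in B\sigma B$), and you identify the fiber of $T^*_{\mathbb{O}_\sigma}\Mat_n$ over $\sigma$ with $\lie{n}\cap\wconjn{\sigma}{n}$, matching the fiber of the Steinberg conormal bundle $T^*_{\mathcal{Z}_\sigma}(\mathcal{B}\times\mathcal{B})$. The change of variable $z=\sigma y$ is a slightly more explicit way of making the same comparison the paper carries out when it writes $T^*_{\mathbb{O}_\sigma}G = (B\times B)\cdot\{(\sigma,y):\sigma y\in\lie{n}\cap\wconjn{\sigma}{n}\}$.
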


%%In the following subsection, we show that the general map $\Phi$
%%(for partial permutations) can actually be described by using the
%%(standard) Steinberg map $\mathrm{St}$.

\begin{remark}
\label{R7.3}
Let $\sigma$ be a permutation and set $\lambda=\Phi_1(\sigma)=\Phi_2(\sigma)$. Then we have
\[\overline{\varphi_1(T^*_{\mathbb{O}_\sigma}G)}=\overline{B\cdot(\mathfrak{n}\cap\wconjn{\sigma}{n})}=:\overline{\mathcal{V}}_\sigma.\]
The set
$\mathcal{V}_\sigma:=\overline{\mathcal{V}}_\sigma\cap\mathcal{O}_\lambda$
is called an {\em orbital variety}
(and $\overline{\mathcal{V}}_\sigma$ is its closure). It is an irreducible component
of the variety $\mathcal{O}_\lambda\cap\mathfrak{n}$ and every
component of $\mathcal{O}_\lambda\cap\mathfrak{n}$ is of the form
$\mathcal{V}_{\sigma'}$ for some $\sigma'\in\mathfrak{S}_n$ such
that $\Phi_1(\sigma')=\Phi_2(\sigma')=\lambda$; see \cite{Joseph}.

Note also that
$\overline{\varphi_2(T^*_{\mathbb{O}_\sigma}G)}=\overline{\mathcal{V}}_{\sigma^{-1}}$.

In fact, even for a {\em partial} permutation $ \tau\in\ppermutationsof{n} $, 
we obtain that the (closures of the) images of 
the conormal bundle $ T^*_{\mathbb{O}_\tau}\Mat_n $
by $\varphi_1$ and $\varphi_2$
are closures of orbital varieties. See Corollary \ref{C7.5} below.
\end{remark}

\subsection{Calculation of the map $\Phi$ for partial permutations}

\label{section-3.4}

\begin{notation} {\rm (a)} As in the previous subsections, $B\subset\GLnC$ denotes the Borel subgroup of upper triangular matrices
and $\mathfrak{n}\subset\Mat_n$ is the subspace of strictly upper triangular matrices.

{\rm (b)} We can view a partial permutation $\tau\in\mathfrak{T}_n$
as a map $\tau:\{1,\ldots,n\}\to\{0,1,\ldots,n\}$ such that
$\#\tau^{-1}(i)\leq 1$ whenever $i\not=0$; the corresponding matrix
has 1 as an entry in the position $(\tau(j),j)$ whenever $\tau(j)\not=0$ and
$0$'s elsewhere. The map $\tau$ can also be written in the form
\begin{equation}\label{eq:expression.pp.tau}
\tau=\left(\begin{array}{ccccccc}
j_1 & \cdots & j_r & m_1 & \cdots & m_{n-r} \\
i_1 & \cdots & i_r & 0 & \cdots & 0
\end{array}\right) ,
\end{equation}
which means that $\tau(j_k)=i_k$ for $k\in\{1,\ldots,r\}$ and $\tau(m_t)=0$ for $t\in\{1,\ldots,n-r\}$,
and we will call
\[\left(\begin{array}{ccccccc}
j_1 & \cdots & j_r  \\
i_1 & \cdots & i_r
\end{array}\right)
\]
the {\em nondegenerate part of $\tau$}, 
which is a bijection between the sets $J(\tau):=\{j_1,\ldots,j_r\}$ and $I(\tau):=\{i_1,\ldots,i_r\}$.  
We also write $M(\tau):=\{m_1,\ldots,m_{n-r}\}$ (the ``kernel'' of $\tau$) and $L(\tau):=\{1,\ldots,n\}\setminus I(\tau)$ (the ``coimage'' of $\tau$).

{\rm (c)} Observe that the transpose of a partial permutation is a
partial permutation, namely for $\tau$ in \eqref{eq:expression.pp.tau}, 
the transpose is given by 
\[
{}^t\tau=\left(\begin{array}{ccccccc}
i_1 & \cdots & i_r & \ell_1 & \cdots & \ell_{n-r} \\
j_1 & \cdots & j_r & 0 & \cdots & 0
\end{array}\right) , 
\]
where $\{\ell_1,\ldots,\ell_{n-r}\} = L(\tau)$. In particular the nondegenerate part of ${}^t\tau$ is the inverse of the nondegenerate part of $\tau$.
\end{notation}

In the following statement we use the notation in Section \ref{section-RS} related to the Robinson-Schensted algorithm and the operation $*$ on Young tableaux.

\begin{theorem}\label{T2} 
Consider a partial permutation 
\[
\tau=
\left(\begin{array}{cccccccc} j_1 & \cdots & j_r & m_1 & \cdots & m_s \\ i_1 & \cdots & i_r & 0 & \cdots & 0 \end{array}\right)
\in\mathfrak{T}_n
\quad (\mbox{$r=\mathrm{rank}\,\tau$ and $r+s=n$}) . \] 
Let
$\sigma=\left(\begin{array}{cccccccc} j_1 & \cdots & j_r
\\ i_1 & \cdots & i_r \end{array}\right)$
be the nondegenerate part
of $\tau$; it yields a pair of Young tableaux
$(\RSl(\sigma),\RSr(\sigma))$ defined through the
Robinson-Schensted algorithm. Assume that $m_1<\cdots<m_s$. Let
$\{\ell_1,\ldots,\ell_s\}:=\{1,\ldots,n\}\setminus\{i_1,\ldots,i_r\}$
and assume that $\ell_1<\cdots<\ell_s$.

Then, the image of the generalized Steinberg map 
$\Phi(\tau)=(\Phi_1(\tau),\Phi_2(\tau)) \in \partitionsof{n}^2 $ is given by
\[
\Phi_1(\tau)=\mathrm{shape}\Big(\RSl(\sigma)*\tableaul{\ell_1 \\ {}^{\vdots} \\
\ell_s}\,\Big),\qquad 
\Phi_2(\tau)=\mathrm{shape}\Big(\,\tableaul{m_1
\\ {}^{\vdots} \\ m_s}*\RSr(\sigma)\Big).
\]
\end{theorem}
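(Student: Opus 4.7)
The plan is to apply Lemma \ref{L1}: by the $B \times B$-equivariance of $\varphi$, the partition $\Phi_1(\tau)$ (resp.\ $\Phi_2(\tau)$) is the Jordan type of $\tau y$ (resp.\ $y\tau$) for a generic $y$ in the affine variety
\[
Y_\tau := \{y \in \Mat_n : \tau y \in \mathfrak{n},\ y\tau \in \mathfrak{n}\}.
\]
The argument would then proceed in three steps: (i) describe $Y_\tau$ in explicit coordinates; (ii) compute the generic Jordan types of $\tau y$ and $y\tau$; and (iii) identify them with the shapes of $\RSl(\sigma) * \tableaul{\ell_1 \\ {}^{\vdots} \\ \ell_s}$ and $\tableaul{m_1 \\ {}^{\vdots} \\ m_s} * \RSr(\sigma)$, respectively.

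For (i), the matrix $\tau y$ has zero rows outside of $I(\tau)$, with its $i_k$-th row equal to the $j_k$-th row of $y$; hence $\tau y \in \mathfrak{n}$ is equivalent to the vanishing $y_{j_k, j} = 0$ for $j \leq i_k$, $k = 1, \ldots, r$. Symmetrically, $y\tau$ has zero columns outside of $J(\tau)$, and $y\tau \in \mathfrak{n}$ reduces to $y_{i, i_k} = 0$ for $i \geq j_k$. Thus $Y_\tau$ is an explicit coordinate subspace of $\Mat_n$ in which the rows indexed by $M(\tau)$ and the columns indexed by $L(\tau)$ are essentially unconstrained, while the $J(\tau)$-rows and $I(\tau)$-columns satisfy triangularity conditions governed by $\sigma$.

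For (ii), I would determine the generic Jordan types through a two-sided argument: an \emph{upper bound} in the dominance order obtained by estimating $\dim \ker (\tau y)^k$ and $\dim \ker (y\tau)^k$ uniformly over $Y_\tau$ via the block structure from (i), and a \emph{lower bound} realized by a specific $y \in Y_\tau$ attaining the prescribed type. The key idea for the lower bound is to take the ``free'' $M(\tau) \times L(\tau)$-block of $y$ to be a generic sub-permutation together with generic entries in the constrained part; the resulting $\tau y$ then behaves as $\tilde\sigma \tilde y$ for a full permutation $\tilde\sigma \in \mathfrak{S}_n$ extending $\sigma$ by an assignment $m_t \mapsto \ell_{\pi(t)}$, with $\tilde y$ generic in $Y_{\tilde\sigma}$, and Proposition \ref{proposition-oldequation6} together with Theorem \ref{T-RS} identifies its Jordan type with $\mathrm{shape}(\RSl(\tilde\sigma))$. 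The formula for $\Phi_2(\tau)$ then follows by the symmetric argument applied to $y\tau$, or equivalently to the transpose partial permutation ${}^t\tau$, whose nondegenerate part is $\sigma^{-1}$ and whose ``zero-row'' and ``zero-column'' index sets are swapped, combined with the identity $(\RSl(\sigma^{-1}), \RSr(\sigma^{-1})) = (\RSr(\sigma), \RSl(\sigma))$ recalled in Section \ref{section-2.2.3}.

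The main obstacle, and the content of step (iii), is the combinatorial identification $\mathrm{shape}(\RSl(\tilde\sigma)) = \mathrm{shape}(\RSl(\sigma) * \tableaul{\ell_1 \\ {}^{\vdots} \\ \ell_s})$ for the specific extension $\tilde\sigma$ dictated by the geometric specialization. Using the identity
\[
T * \tableaul{\ell_1 \\ {}^{\vdots} \\ \ell_s} = ((T \leftarrow \ell_s) \leftarrow \ell_{s-1}) \leftarrow \cdots \leftarrow \ell_1
\]
from Section \ref{section-2.2.4}, this amounts to a Knuth-equivalence statement comparing the row-reading word of $\tilde\sigma$ with the concatenation of the word of $\sigma$ and $\ell_s, \ldots, \ell_1$. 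If such a direct matching proves too delicate, a fallback is to dispense with the reduction in (ii) and compute $\dim \ker (\tau y)^k$ for generic $y \in Y_\tau$ directly from the block description of (i), then identify the partition by comparing these dimensions with the partial column sums of $\RSl(\sigma) * \tableaul{\ell_1 \\ {}^{\vdots} \\ \ell_s}$ via the dominance-order characterization of Jordan type.
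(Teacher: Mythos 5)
Your opening is on track: reducing to Lemma~\ref{L1} and writing out the linear conditions cutting out $Y_\tau$ inside $\Mat_n$ is exactly how the paper's proof begins. But from step (ii) onward the argument loses the key structural observation that makes everything click, and the reduction you sketch to a full permutation $\tilde\sigma$ is not sound as stated.

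The flaw in step (ii): you propose a specific $y \in Y_\tau$ for which ``$\tau y$ behaves as $\tilde\sigma\tilde y$'' for a \emph{full} permutation $\tilde\sigma$ extending $\sigma$. But $\tau$ is singular: it annihilates the columns indexed by $M(\tau)$ and its image misses the rows indexed by $L(\tau)$. Consequently $\tau y$ has zero rows outside $I(\tau)$ for \emph{every} $y$, while $\tilde\sigma\tilde y$ generically does not; there is no way to realize $\tau y$ as a product of an invertible permutation matrix and a suitable $\tilde y$. So the ``lower bound'' construction does not go through, and the ``upper/lower bound in dominance order'' framing, while morally plausible, has no machinery behind it.

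The idea you are missing is to compute not $Y_\tau$ itself but the \emph{image} $V_1(\tau) := \{\tau y : y \in Y_\tau\}$. Since left multiplication by $\tau$ simply picks out the $j_k$-th row of $y$ and relocates it to row $i_k$, the space $V_1(\tau)$ is a direct sum of elementary matrices $\bigoplus_{(i,j)\in D_1}\C e_{i,j}$ for an explicit index set $D_1$, and one checks that $D_1 = D(w_1) := \{(i,j): i<j,\ w_1^{-1}(i)<w_1^{-1}(j)\}$ for the concrete permutation $w_1$ sending $1,\ldots,r,r{+}1,\ldots,n$ to $i_1,\ldots,i_r,\ell_s,\ldots,\ell_1$. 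In other words $V_1(\tau) = \mathfrak{n}\cap\wconjn{w_1}{n}$, so $\Phi_1(\tau) = \mathrm{St}(w_1)$ by the classical Steinberg theorem, and then Theorem~\ref{T-RS} immediately gives $\mathrm{St}(w_1) = \mathrm{shape}(\rinsert(i_1,\ldots,i_r,\ell_s,\ldots,\ell_1)) = \mathrm{shape}\bigl(\RSl(\sigma)*\tableaul{\ell_1 \\ {}^{\vdots} \\ \ell_s}\bigr)$ with no further combinatorial work. The ``main obstacle'' you identify in step (iii) therefore dissolves; it was created by the detour through $\tilde\sigma$. (For $\Phi_2$, either run the same argument on $V_2(\tau)=\{y\tau : y\in Y_\tau\}$ with a second explicit permutation $w_2$, as the paper does, or use the transpose $^t\tau$ and $(\RSl,\RSr)(\sigma^{-1})=(\RSr,\RSl)(\sigma)$, as you suggest — both are fine once the $V_i(\tau)=\mathfrak{n}\cap\wconjn{w_i}{n}$ identification is in place.)

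Your fallback — computing $\dim\ker(\tau y)^k$ directly for generic $y\in Y_\tau$ and matching column sums — is not obviously wrong but would require substantial work to carry out, and is unnecessary once you see that $V_1(\tau)$ is itself one of the classical Steinberg slices.
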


\begin{proof}
We denote
\[V_1(\tau)
:=\{\tau y : y\in\Mat_n\ \mbox{s.t.}\ (\tau y,y\tau)\in\lie{n}{\times}\lie{n}\},\]
and similarly,
\[
V_2(\tau)
:=\{ y\tau : y\in\Mat_n\ \mbox{s.t.}\ (\tau y,y\tau)\in\lie{n}{\times}\lie{n}\}.\]
By Lemma \ref{L1}, the nilpotent orbit corresponding to  $\Phi_i(\tau)$ intersects 
$V_i(\tau)$ along a dense open subset and this serves as characterization of the orbit (for $i\in\{1,2\}$).

Let us compute the spaces
$V_1(\tau)$ and $V_2(\tau)$.
Let $e_{i,j}$ stand for the elementary matrix with $1$ at the position $(i,j)$ and $0$'s elsewhere. It is straightforward to see that
\[\tau e_{i,j}\in\mathfrak{n}\iff \left\{\begin{array}{ll}
i\in\{m_1,\ldots,m_{s}\} & \mbox{(in which case $\tau e_{i,j}=0$), or} \\[1mm]
i=j_k\mbox{ and $i_k<j$} & \mbox{(in which case $\tau e_{i,j}=e_{i_k,j}$)}
\end{array}\right.\]
and
\[e_{i,j}\tau \in\mathfrak{n}\iff \left\{\begin{array}{ll}
j\in\{\ell_1,\ldots,\ell_{s}\} & \mbox{(in which case $e_{i,j}\tau=0$), or} \\[1mm]
j=i_k\mbox{ and $j_k>i$} & \mbox{(in which case $e_{i,j}\tau=e_{i,j_k}$).}
\end{array}\right.\]
This yields
\[V_1(\tau)=\bigoplus_{(i,j)\in D_1} \C e_{i,j}
\quad\mbox{and}\quad
V_2(\tau)=\bigoplus_{(i,j)\in D_2} \C e_{i,j}\]
where
\begin{eqnarray*}
D_1 & = & \{(i_k,\ell_t):i_k<\ell_t\}\cup\{(i_k,i_t):i_k<i_t,\ j_k<j_t\} \quad \mbox{and}\quad 
\\[1mm]
D_2 & = & \{(m_k,j_t):m_k<j_t\}\cup\{(j_k,j_t):j_k<j_t,\ i_k<i_t\}.
\end{eqnarray*}

For $w\in\mathfrak{S}_n$, we put
\begin{equation*}
D(w):=\{(i,j):1\leq i<j\leq n,\ w^{-1}(i)<w^{-1}(j)\}. 
\end{equation*}
Note that the equality
\begin{equation}
\bigoplus_{(i,j)\in D(w)} \C e_{i,j} = \lie{n} \cap \wconjn{w}{n} 
\end{equation}
holds. By the classical Steinberg theory (Section \ref{section-Steinberg}), we already know the nilpotent orbit which intersects $ \lie{n}\cap\wconjn{w}{n} $ densely.
Thus let us see that $ D_1 $ and $ D_2 $ are exactly of the form $ D(w) $ above for some $ w $.  

We may assume that $j_1<\ldots<j_r$. Moreover let $\{i'_1,\ldots,i'_r\}:=\{i_1,\ldots,i_r\}$ with $i'_1<\ldots<i'_r$ and set $j'_k=\sigma^{-1}(i'_k)$ for all $k\in\{1,\ldots,r\}$.  
Using these indices, 
we define permutations $w_1,w_2$ by
\begin{equation}
\label{w1-w2}
w_1:=\left(\begin{array}{cccccccc} 1 & \cdots & r & r{+}1 & \cdots & n \\ i_1 & \cdots & i_r & \ell_{s} & \cdots & \ell_1 \end{array}\right),\ \ 
w_2:=\left(\begin{array}{cccccccc} 1 & \cdots & s & s{+}1 & \cdots & n \\ m_{s} & \cdots & m_1 & j'_1 & \cdots & j'_r
\end{array}\right) . 
\end{equation}
Then it is easy to see that $ D_i = D(w_i) \; (i = 1, 2) $.  
Whence
\begin{equation}
\label{10}
V_i(\tau)=\bigoplus_{(k,\ell)\in D(w_i)} \C e_{k,\ell}=\mathfrak{n}\cap \wconjn{w_i}{n}\quad\mbox{for $i\in\{1,2\}$.}
\end{equation}

By definition, the partition $\lambda=\mathrm{St}(w)$ encodes the nilpotent orbit which intersects the space $\mathfrak{n}\cap \wconjn{w}{n}$ along a dense open subset (see Section \ref{section-Steinberg}).
Therefore (\ref{10}) implies 
\[(\Phi_1(\tau),\Phi_2(\tau))=(\mathrm{St}(w_1),\mathrm{St}(w_2)).\]
By Theorem \ref{T-RS}, the Steinberg map
$\mathrm{St}$ can be computed by means of the Robinson-Schensted
algorithm. Namely for $w_1,w_2$ we deduce that
\begin{eqnarray*}
\Phi_1(\tau)=\mathrm{St}(w_1) & = & \mathrm{shape}(\RSl(w_1))=\mathrm{shape}(\rinsert(i_1,\ldots,i_r,\ell_{s},\ldots,\ell_1)) \\
 & = & \mathrm{shape}(\rinsert(i_1,\ldots,i_r)\leftarrow \ell_{s}\leftarrow\cdots\leftarrow \ell_1) \\
 & = & \mathrm{shape}\Big(\RSl(\sigma)*\tableaul{\ell_1 \\ { }^{\vdots} \\ \ell_s}\,\Big)
\end{eqnarray*}
and
\begin{eqnarray*}
\Phi_2(\tau)=\mathrm{St}(w_2) & = & \mathrm{shape}(\RSl(w_2))=\mathrm{shape}(\rinsert(m_{s},\ldots,m_1,j'_1,\ldots,j'_r)) \\
 & = & \mathrm{shape}(\cinsert(j'_r,\ldots,j'_1,m_1,\ldots,m_s)) \\
 & = & \mathrm{shape}(m_s\to\cdots\to m_1\to\cinsert(j'_r,\ldots,j'_1)) \\
 & = & \mathrm{shape}\Big(\,\tableaul{m_1 \\ { }^{\vdots} \\ m_s}*\RSr(\sigma)\,\Big)
\end{eqnarray*}
where we use that $\cinsert(j'_r,\ldots,j'_{1})=\rinsert(j'_1,\ldots,j'_{r})$ and
\[\rinsert(j'_1,\ldots,j'_{r})=\RSl\big(\left(\begin{array}{ccc} i'_1 & \cdots & i'_r \\ j'_1 & \cdots & j'_r \end{array}\right)\big)=\RSl(\sigma^{-1})=\RSr(\sigma).\] The proof is complete.
\end{proof}

Recall from Remark \ref{R7.3} the notion of orbital variety.

\begin{corollary}
\label{C7.5}
Let $\tau$ be a partial permutation and let
$(\lambda,\mu)=(\Phi_1(\tau),\Phi_2(\tau))$. Then the varieties
$\overline{\varphi_1(T^*_{\mathbb{O}_\tau}\Mat_n)}$
and
$\overline{\varphi_2(T^*_{\mathbb{O}_\tau}\Mat_n)}$
are closures of orbital varieties of $\mathcal{O}_\lambda$ and $\mathcal{O}_\mu$, respectively.
Namely we have
\[
\overline{\varphi_1(T^*_{\mathbb{O}_\tau}\Mat_n)}\cap\mathcal{O}_\lambda=\mathcal{V}_{w_1}
\quad\mbox{and}\quad
\overline{\varphi_2(T^*_{\mathbb{O}_\tau}\Mat_n)}\cap\mathcal{O}_\mu=\mathcal{V}_{w_2}
\]
where $w_1,w_2$ are the permutations defined in (\ref{w1-w2}).
\end{corollary}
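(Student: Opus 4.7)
The plan is to extract the statement directly from the computations carried out in the proof of Theorem~\ref{T2}, combined with the characterization of orbital varieties recalled in Remark~\ref{R7.3}.

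First, by Corollary~\ref{C1}, the conormal bundle is
\[
T^*_{\mathbb{O}_\tau}\Mat_n = (B\times B)\cdot\{(\tau,y) : \tau y\in\mathfrak{n},\ y\tau\in\mathfrak{n}\},
\]
and under the $(B\times B)$-action of~(\ref{5}) we have $\varphi_1(b_1\tau b_2^{-1}, b_2 y b_1^{-1}) = b_1(\tau y)b_1^{-1}$ and $\varphi_2(b_1\tau b_2^{-1}, b_2 y b_1^{-1}) = b_2(y\tau)b_2^{-1}$. I would first record these formulas to obtain
\[
\varphi_1(T^*_{\mathbb{O}_\tau}\Mat_n) = B\cdot V_1(\tau), \qquad \varphi_2(T^*_{\mathbb{O}_\tau}\Mat_n) = B\cdot V_2(\tau),
\]
where $V_1(\tau), V_2(\tau)$ are the subspaces introduced at the start of the proof of Theorem~\ref{T2}.

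The key ingredient is then (\ref{10}) from that proof, namely the identification $V_i(\tau) = \mathfrak{n}\cap\wconjn{w_i}{n}$ for the permutations $w_1, w_2$ of~(\ref{w1-w2}). Substituting this in and passing to closures gives
\[
\overline{\varphi_i(T^*_{\mathbb{O}_\tau}\Mat_n)} = \overline{B\cdot(\mathfrak{n}\cap\wconjn{w_i}{n})} = \overline{\mathcal{V}}_{w_i},
\]
the last equality being the definition recalled in Remark~\ref{R7.3}. Since the proof of Theorem~\ref{T2} also yielded $\mathrm{St}(w_1) = \Phi_1(\tau) = \lambda$ and $\mathrm{St}(w_2) = \Phi_2(\tau) = \mu$, the closure $\overline{\mathcal{V}}_{w_1}$ (resp.\ $\overline{\mathcal{V}}_{w_2}$) sits inside $\overline{\mathcal{O}_\lambda}$ (resp.\ $\overline{\mathcal{O}_\mu}$), and intersecting with the ambient nilpotent orbit isolates the orbital variety itself: $\overline{\mathcal{V}}_{w_i}\cap\mathcal{O}_{\Phi_i(\tau)} = \mathcal{V}_{w_i}$.

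I do not anticipate any substantive obstacle, since the corollary is essentially a geometric repackaging of equation~(\ref{10}). The only minor point to verify is that the image $B\cdot(\mathfrak{n}\cap\wconjn{w_i}{n})$ really has $\overline{\mathcal{V}}_{w_i}$ as its closure rather than a proper subvariety thereof, but this is immediate from the irreducibility of $B\cdot(\mathfrak{n}\cap\wconjn{w_i}{n})$ (as the image of the irreducible variety $B\times(\mathfrak{n}\cap\wconjn{w_i}{n})$ under an algebraic map) together with the very definition of $\overline{\mathcal{V}}_{w_i}$.
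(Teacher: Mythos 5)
Your proof is correct and follows the same route as the paper's own argument: identify $\varphi_i(T^*_{\mathbb{O}_\tau}\Mat_n)=B\cdot V_i(\tau)$, invoke equation~(\ref{10}) to get $V_i(\tau)=\lie{n}\cap\wconjn{w_i}{n}$, and conclude via the definitions recalled in Remark~\ref{R7.3}. Your closing worry about whether the closure could be a proper subvariety is unnecessary, since $\overline{\mathcal{V}}_{w_i}$ is \emph{defined} in Remark~\ref{R7.3} to be $\overline{B\cdot(\lie{n}\cap\wconjn{w_i}{n})}$, so that equality is tautological rather than something to verify.
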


\begin{proof}
By definition we have
$\varphi_i(T^*_{\mathbb{O}_\tau}\Mat_n)=B\cdot V_i(\tau)$ (for $i\in\{1,2\}$),
where $V_i(\tau)$ is as in the proof of Theorem \ref{T2}.
Then the claim follows from (\ref{10}).
\end{proof}

%The following generalization of the Robinson-Schensted correspondence arises in the theorem.

The proof of Theorem~\ref{T2} actually involves 
a generalization of the Robinson-Schensted correspondence for partial permutations.  
%%Since it is of importance, 
We state it as a theorem below.

%%In addition to the previous notation,
Let $\mathrm{STab}(\lambda)$
denote the set of standard tableaux of shape $ \lambda\in\partitionsof{n} $,
i.e., Young tableaux of shape $\lambda$ with entries $1,\ldots,n$.
Also, for a subdiagram $\nu\subset\lambda$,
we say that  $\lambda\setminus\nu$ is a {\em column strip} (or a vertical strip)
if the skew diagram $\lambda\setminus\nu$
contains at most one box in each row.

\begin{theorem}\label{thm:3.9}%\label{C2}
There is a bijective correspondence
between the set of partial permutations $\mathfrak{T}_n$ and the set of triples
\[
\bigsqcup_{\substack{\lambda,\mu\in\partitionsof{n}\\r\in\{0,\ldots,n\}}}
\begin{array}[t]{r}
\big\{
(T_1,T_2,\nu)\in\mathrm{STab}(\lambda)\times\mathrm{STab}(\mu)\times\partitionsof{r}:\nu\subset\lambda,\ \nu\subset\mu, \\[1mm]
\mbox{and $\lambda\setminus\nu$, $\mu\setminus\nu$ are column strips}
\big\},
\end{array}\]
which is given explicitly as
\[\tau\mapsto\Big(\RSl(\sigma)*\tableaul{\ell_1 \\ { }^{\vdots} \\ \ell_s}\,,\ \tableaul{m_1 \\ { }^{\vdots} \\ m_s}*\RSr(\sigma),\ \mathrm{shape}(\RSl(\sigma))\Big)  \]
where $ \sigma $ denotes the nondegenerate part of $\tau$
(with the same notation as in Theorem \ref{T2}).
\end{theorem}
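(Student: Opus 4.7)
The plan is to verify that the forward map $\tau\mapsto(T_1,T_2,\nu)$ lands in the claimed set of triples, then exhibit an explicit inverse via reverse row-- and column-insertion. For the forward direction, given $\tau$ with nondegenerate part $\sigma$ and sorted ancillary data $\ell_1<\cdots<\ell_s$, $m_1<\cdots<m_s$, the tableau $T_1$ is standard with entries $I(\tau)\cup L(\tau)=\{1,\ldots,n\}$ (and likewise for $T_2$), while $\nu=\mathrm{shape}(\RSl(\sigma))=\mathrm{shape}(\RSr(\sigma))\in\partitionsof{r}$ is clearly contained in both $\lambda=\mathrm{shape}(T_1)$ and $\mu=\mathrm{shape}(T_2)$ since the operation ``$*$ a column'' only adds boxes. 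The crucial column-strip property of $\lambda\setminus\nu$ follows from the classical bumping lemma: row-inserting a strictly decreasing sequence $\ell_s>\cdots>\ell_1$ into a tableau creates new boxes whose row indices strictly increase with the order of insertion, so their union is a vertical (column) strip. Dually, column-inserting the strictly increasing sequence $m_1<\cdots<m_s$ produces new boxes with strictly increasing column indices, yielding the column strip $\mu\setminus\nu$.

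For the inverse, given a triple $(T_1,T_2,\nu)$ satisfying the stated conditions, I would construct $\tau$ in three stages. \emph{(a)} From $(T_1,\nu)$, iteratively peel the column strip by reverse row-insertion: at each stage $k=1,\ldots,s$, select the box of the current column strip $\lambda^{(k)}\setminus\nu$ lying in the highest row; the column-strip condition ensures this box is an outer corner of $\lambda^{(k)}$ (no box of $\lambda^{(k)}$ sits immediately to its right or below). Reverse-row-insert from this corner, record the popped entry as $\ell_k$, and repeat with the smaller shape $\lambda^{(k+1)}$. After $s$ iterations one obtains a Young tableau $P$ of shape $\nu$ together with a set $L=\{\ell_1,\ldots,\ell_s\}$. \emph{(b)} From $(T_2,\nu)$, dually perform reverse column-insertion from the rightmost-column box of the current strip, producing $Q$ of shape $\nu$ and a set $M=\{m_1,\ldots,m_s\}$. \emph{(c)} Since $P$ and $Q$ share the shape $\nu$ and have entries $I:=\{1,\ldots,n\}\setminus L$ and $J:=\{1,\ldots,n\}\setminus M$ respectively, the classical inverse Robinson--Schensted correspondence yields a unique bijection $\sigma\colon J\to I$, which I extend to a partial permutation $\tau$ by declaring $\tau|_M\equiv 0$.

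The heart of the argument is showing that these two constructions are mutually inverse, which rests on a sharp form of the bumping lemma: for the decreasing row-insertion sequence $\ell_s>\cdots>\ell_1$, the $k$-th new box sits \emph{strictly} below the $(k-1)$-th. Consequently $\ell_1$ (the last to be inserted) creates the highest-row box of $\lambda\setminus\nu$, so reverse-insertion from that corner exactly undoes the last step, returning the value $\ell_1$ and the tableau preceding it. Iteration, together with the observation that ``highest-row in the current strip'' always identifies the most recently created box, shows that $(\RSl(\sigma),L)$ is uniquely recovered from $(T_1,\nu)$; the dual statement recovers $(\RSr(\sigma),M)$ from $(T_2,\nu)$. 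The main obstacle is organising this bumping-lemma induction cleanly. A convenient shortcut is to invoke the identities $T_1=\RSl(w_1)$ and $T_2=\RSl(w_2)$ proved within Theorem~\ref{T2} for the auxiliary permutations $w_1,w_2$ of (\ref{w1-w2}): since $\tau\mapsto(w_1,w_2)$ is patently injective with characterisable image, the classical Robinson--Schensted bijection applied to each $w_i$ reduces the remaining verification to matching the descent structures of $(w_1,w_2)$ with the column-strip conditions on $(T_1,T_2,\nu)$.
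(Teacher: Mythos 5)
Your forward direction is essentially the argument the paper makes: show $\lambda\setminus\nu$ and $\mu\setminus\nu$ are column strips because $T_1$ and $T_2$ arise by row- (resp.\ column-) inserting monotone sequences, citing the bumping lemma (the paper cites the same ``Proposition in~\S1.1'' of Fulton). A small slip: column-inserting the increasing sequence $m_1<\cdots<m_s$ creates new boxes with strictly increasing \emph{row} indices, not column indices (transpose the row-bumping statement through ${}^tT_2={}^tS_2\leftarrow m_1\leftarrow\cdots\leftarrow m_s$); your conclusion is nevertheless correct.

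Your explicit inverse, however, has a real error. You direct the peeling to start at the \emph{highest-row} box of the strip $\lambda\setminus\nu$ and assert that the column-strip condition makes this box an outer corner of $\lambda$. That is false: take $\nu=(1,1)$ and $\lambda=(2,2)$, so $\lambda\setminus\nu=\{(1,2),(2,2)\}$ is a column strip; its highest-row box $(1,2)$ has a box directly below it, hence is not an outer corner, and reverse row-insertion from it is undefined. The same problem afflicts your $T_2$ recipe (``rightmost-column box''). The correct box to peel from is the \emph{lowest-row} (largest row index) box of the strip, since the last insertion $\ell_1$ (the minimum of the decreasing insertion sequence $\ell_s>\cdots>\ell_1$) creates the lowest box, by the very bumping lemma you state; indeed your sentence ``Consequently $\ell_1$ \ldots\ creates the highest-row box'' contradicts the immediately preceding ``the $k$-th new box sits strictly below the $(k-1)$-th.'' With ``lowest'' in place of ``highest'' the peeling box is always an outer corner (it is $(i,\lambda_i)$ for the maximal $i$ with $\lambda_i>\nu_i$, and maximality forces $\lambda_{i+1}<\lambda_i$), and your induction then works and coincides with what the paper delegates to Fulton's proposition and its converse.

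The closing ``shortcut'' via $(w_1,w_2)$ is too sketchy to sustain the bijection on its own: while $T_i=\RSl(w_i)$ is indeed established in the proof of Theorem~\ref{T2}, the pair $(w_1,w_2)$ does not determine $r$ (hence not $\nu$), and the promised ``matching of descent structures'' with the column-strip condition is neither stated nor proved. As written this is a hint, not an argument.
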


\begin{proof}
By ${}^tT$ we denote the transpose of a Young tableau $T$. 
For a partial permutation $ \tau \in \ppermutationsof{n} $, 
the corresponding triple $(T_1,T_2,\nu)$ 
is constructed in the following way.  
Let us denote the pair of intermediate tableaux by $S_1=\RSl(\sigma)$ and $S_2=\RSr(\sigma)$.  
Then the triple is obtained by 
\begin{equation}
\label{9}
\left\{
\begin{array}{lll}
T_1 & = & S_1\leftarrow \ell_{s}\leftarrow\cdots\leftarrow \ell_1 \\[1mm]
{}^tT_2 & = & {}^tS_2\leftarrow m_1\leftarrow\cdots\leftarrow m_{s} \\[1mm]
\nu & = & \mathrm{shape}(S_1)=\mathrm{shape}(S_2)
\end{array}
\right.
\end{equation}
From \cite[Proposition in \S 1.1]{Fulton}, we see the skew diagrams
$\mathrm{shape}(T_i)\setminus\nu$ ($i\in\{1,2\}$) are column strips,
i.e., the image of the map in the statement is contained in the right-hand side. 
Conversely the same proposition in \cite{Fulton} 
%%from \cite[Proposition in \S1.1]{Fulton} 
tells that for any triple $(T_1,T_2,\nu)$ in the right-hand side 
we can reverse the procedure to get 
a pair of Young tableaux
$(S_1,S_2)$ and lists of entries $\ell_1<\cdots<\ell_{s}$ and
$m_1<\cdots<m_{s}$ such that (\ref{9}) holds. We denote by
$\{i_1,\ldots,i_r\}$ (resp. $\{j_1,\ldots,j_r\}$) the set of entries
of $S_1$ (resp. $S_2$). By the Robinson-Schensted correspondence,
there is a unique bijection
\[w:\{j_1,\ldots,j_r\}\to\{i_1,\ldots,i_r\}\]
such that $S_1=\RSl(w)$ and $S_2=\RSr(w)$. 
In this way, we can recover the original partial permutation $ \tau $.  
%%Finally $w$ determines a
%%unique partial permutation $\tau\in\mathfrak{T}_n$ such that the
%%nondegenerate part of $\sigma_1^{-1}\tau\sigma_2$ coincides with $w$. 
This shows that the map under consideration is bijective,
which completes the proof.
\end{proof}

\begin{remark}
Through the correspondence described in Theorem \ref{thm:3.9}, any {\em permutation} $\sigma\in\mathfrak{S}_n$ corresponds to a triple $(T_1,T_2,\nu)$ such that $\nu=\mathrm{shape}(T_1)=\mathrm{shape}(T_2)$; moreover in this case we have $(T_1,T_2)=(\RSl(\sigma),\RSr(\sigma))$. Thus, in this way, the correspondence in Theorem \ref{thm:3.9} reduces to 
the classical Robinson-Schensted correspondence on the set of permutations.
\end{remark}

\begin{example}
\label{example-1} In Figures \ref{F1}--\ref{F3}, we describe the
correspondence $\tau\mapsto(T_1,T_2,\nu)$ of Theorem \ref{thm:3.9} for
$n=3$. The set $\mathfrak{T}_3$ contains 34 elements. For each partial permutation
$\tau$, we indicate the nondegenerate part $\sigma$, the pair
$(\RSl(\sigma),\RSr(\sigma))$, and the corresponding triple
$(T_1,T_2,\nu)$. We display the list in three parts, according to
the rank $r$ of~$\tau$. \ytableausetup{smalltableaux}
\begin{figure}[p]
{\footnotesize
\begin{tabular}{|c|c|c|c|c|c|}
\hline 
$\tau$ & $\sigma$ & $(\RSl(\sigma),\RSr(\sigma))$ & $(T_1,T_2,\nu)$ & $\Phi(\tau)=\Xik(\left(\begin{smallmatrix}\tau \\ 1_3\end{smallmatrix}\right))$ & $\Xis(\left(\begin{smallmatrix}\tau \\ 1_3\end{smallmatrix}\right))$
\\
\hline\hline
\vphantom{$\begin{smallmatrix} 1 \\ 2 \\ 3 \\ 4 \\ 5 \end{smallmatrix} $}
$\left(\begin{smallmatrix} 1 & 2 & 3 \\ 1 & 2 & 3 \end{smallmatrix}\right)$ & $=\tau$ & $\ytableaushort{123},\ytableaushort{123}$ & $\ytableaushort{123},\ytableaushort{123},\ydiagram{3}$ & $\ydiagram{3},\ydiagram{3}$ & $\ytableaushort{+-+,-+-}$ \\
\hline
\vphantom{$\begin{smallmatrix} 1 \\ 2 \\ 3 \\ 4 \\ 5 \end{smallmatrix} $}
$\left(\begin{smallmatrix} 1 & 2 & 3 \\ 1 & 3 & 2 \end{smallmatrix}\right)$ & $=\tau$ & $\ytableaushort{12,3},\ytableaushort{12,3}$ & $\ytableaushort{12,3},\ytableaushort{12,3},\ydiagram{2,1}$ & \multirow{4}{*}{$\ydiagram{2,1},\ydiagram{2,1}$} & \multirow{  4}{*}{$\ytableaushort{+-,-+,+,-}$} \\
\cline{1-4}
\vphantom{$\begin{smallmatrix} 1 \\ 2 \\ 3 \\ 4 \\ 5 \end{smallmatrix} $}
$\left(\begin{smallmatrix} 1 & 2 & 3 \\ 2 & 1 & 3 \end{smallmatrix}\right)$ & $=\tau$ & $\ytableaushort{13,2},\ytableaushort{13,2}$ & $\ytableaushort{13,2},\ytableaushort{13,2},\ydiagram{2,1}$ & & \\
\cline{1-4} 
\vphantom{$\begin{smallmatrix} 1 \\ 2 \\ 3 \\ 4 \\ 5 \end{smallmatrix} $}
$\left(\begin{smallmatrix} 1 & 2 & 3 \\ 2 & 3 & 1
\end{smallmatrix}\right)$ & $=\tau$ & $\ytableaushort{13,2},\ytableaushort{12,3}$ &
$\ytableaushort{13,2},\ytableaushort{12,3},\ydiagram{2,1}$ & & \\ 
\cline{1-4}
\vphantom{$\begin{smallmatrix} 1 \\ 2 \\ 3 \\ 4 \\ 5 \end{smallmatrix} $}
$\left(\begin{smallmatrix} 1 & 2 & 3 \\ 3 & 1 & 2 \end{smallmatrix}\right)$ & $=\tau$ & $\ytableaushort{12,3},\ytableaushort{13,2}$ & $\ytableaushort{12,3},\ytableaushort{13,2},\ydiagram{2,1}$ & & \\
\hline
\vphantom{$\begin{smallmatrix} 1 \\ 2 \\ 3 \\ 4 \\ 5 \end{smallmatrix} $}
$\left(\begin{smallmatrix} 1 & 2 & 3 \\ 3 & 2 & 1 \end{smallmatrix}\right)$ & $=\tau$ & $\ytableaushort{1,2,3},\ytableaushort{1,2,3}$ & $\ytableaushort{1,2,3},\ytableaushort{1,2,3},\ydiagram{1,1,1}$ & $\ydiagram{1,1,1},\ydiagram{1,1,1}$ & $\ytableaushort{+,+,+,-,-,-}$ \\
\hline
\end{tabular}}
\caption{The correspondence $\tau\mapsto(T_1,T_2,\nu)$  for
$\mathfrak{T}_3$ ($\mathrm{rank}\,\tau=3$); the maps $\Phi$,
$\Xi_{\mathfrak{k}}$, and $\Xi_{\mathfrak{s}}$} \label{F1}
\end{figure}

\begin{figure}[p]
{\footnotesize
\begin{tabular}{|c|c|c|c|c|c|}
\hline
$\tau$ & $\sigma$ & $(\RSl(\sigma),\RSr(\sigma))$ & $(T_1,T_2,\nu)$ & $\Phi(\tau)=\Xik(\left(\begin{smallmatrix}\tau \\ 1_3\end{smallmatrix}\right))$ & $\Xis(\left(\begin{smallmatrix}\tau \\ 1_3\end{smallmatrix}\right))$ \\
\hline\hline 
$\left(\begin{smallmatrix} 1 & 2 & 3 \\ 0 & 1 & 2
\end{smallmatrix}\right)$ & $\left(\begin{smallmatrix} 2 & 3 \\ 1 &
2 \end{smallmatrix}\right)$ & $\ytableaushort{12},\ytableaushort{23}$ &
$\ytableaushort{123},\ytableaushort{123},\ydiagram{2}$
 & $\ydiagram{3},\ydiagram{3}$ & $\ytableaushort{-+-+,-+}$ \\
\hline
\vphantom{$\begin{smallmatrix} 1 \\ 2 \\ 3 \\ 4 \end{smallmatrix} $}
$\left(\begin{smallmatrix} 1 & 2 & 3 \\ 1 & 2 & 0 \end{smallmatrix}\right)$ & $\left(\begin{smallmatrix} 1 & 2 \\ 1 & 2 \end{smallmatrix}\right)$ & $\ytableaushort{12},\ytableaushort{12}$ & $\ytableaushort{123},\ytableaushort{12,3},\ydiagram{2}$ & \multirow{ 2}{*}{$\ydiagram{3},\ydiagram{2,1}$} & \multirow{ 2}{*}{$\ytableaushort{+-+,-+,-}$} \\
\cline{1-4}
\vphantom{$\begin{smallmatrix} 1 \\ 2 \\ 3 \\ 4 \\ 5 \end{smallmatrix} $}
$\left(\begin{smallmatrix} 1 & 2 & 3 \\ 1 & 0 & 2 \end{smallmatrix}\right)$ & $\left(\begin{smallmatrix} 1 & 3 \\ 1 & 2 \end{smallmatrix}\right)$ & $\ytableaushort{12},\ytableaushort{13}$ & $\ytableaushort{123},\ytableaushort{13,2},\ydiagram{2}$ &  &  \\
\hline 
\vphantom{$\begin{smallmatrix} 1 \\ 2 \\ 3 \\ 4 \\ 5 \end{smallmatrix} $}
$\left(\begin{smallmatrix} 1 & 2 & 3 \\ 0 & 2 & 3
\end{smallmatrix}\right)$ & $\left(\begin{smallmatrix} 2 & 3 \\ 2 &
3 \end{smallmatrix}\right)$ & $\ytableaushort{23},\ytableaushort{23}$ &
$\ytableaushort{13,2},\ytableaushort{123},\ydiagram{2}$ &
\multirow{2}{*}{$\ydiagram{2,1},\ydiagram{3}$} &
\multirow{ 2}{*}{$\ytableaushort{-+-,-+,+}$} \\
\cline{1-4} 
\vphantom{$\begin{smallmatrix} 1 \\ 2 \\ 3 \\ 4 \\ 5 \end{smallmatrix} $}
$\left(\begin{smallmatrix} 1 & 2 & 3 \\ 0 & 1 & 3
\end{smallmatrix}\right)$ & $\left(\begin{smallmatrix} 2 & 3 \\ 1 &
3 \end{smallmatrix}\right)$ & $\ytableaushort{13},\ytableaushort{23}$ &
$\ytableaushort{12,3},\ytableaushort{123},\ydiagram{2}$ & &
 \\
\hline
\vphantom{$\begin{smallmatrix} 1 \\ 2 \\ 3 \\ 4 \\ 5 \end{smallmatrix} $}
$\left(\begin{smallmatrix} 1 & 2 & 3 \\ 1 & 0 & 3 \end{smallmatrix}\right)$ & $\left(\begin{smallmatrix} 1 & 3 \\ 1 & 3 \end{smallmatrix}\right)$ & $\ytableaushort{13},\ytableaushort{13}$ & $\ytableaushort{12,3},\ytableaushort{13,2},\ydiagram{2}$ & \multirow{ 8}{*}{$\ydiagram{2,1},\ydiagram{2,1}$} & \multirow{4}{*}{$\ytableaushort{+-,-+,-+}$} \\
\cline{1-4}
\vphantom{$\begin{smallmatrix} 1 \\ 2 \\ 3 \\ 4 \\ 5 \end{smallmatrix} $}
$\left(\begin{smallmatrix} 1 & 2 & 3 \\ 1 & 3 & 0 \end{smallmatrix}\right)$ & $\left(\begin{smallmatrix} 1 & 2 \\ 1 & 3 \end{smallmatrix}\right)$ & $\ytableaushort{13},\ytableaushort{12}$ & $\ytableaushort{12,3},\ytableaushort{12,3},\ydiagram{2}$ & & \\
\cline{1-4} 
\vphantom{$\begin{smallmatrix} 1 \\ 2 \\ 3 \\ 4 \\ 5 \end{smallmatrix} $}
$\left(\begin{smallmatrix} 1 & 2 & 3 \\ 2 & 0 & 3
\end{smallmatrix}\right)$ & $\left(\begin{smallmatrix} 1 & 3 \\ 2 &
3 \end{smallmatrix}\right)$ & $\ytableaushort{23},\ytableaushort{13}$ &
$\ytableaushort{13,2},\ytableaushort{13,2},\ydiagram{2}$ & & \\
\cline{1-4}
\vphantom{$\begin{smallmatrix} 1 \\ 2 \\ 3 \\ 4 \\ 5 \end{smallmatrix} $}
$\left(\begin{smallmatrix} 1 & 2 & 3 \\ 2 & 3 & 0 \end{smallmatrix}\right)$ & $\left(\begin{smallmatrix} 1 & 2 \\ 2 & 3 \end{smallmatrix}\right)$ & $\ytableaushort{23},\ytableaushort{12}$ & $\ytableaushort{13,2},\ytableaushort{12,3},\ydiagram{2}$ & & \\
\cline{1-4} \cline{6-6}
\vphantom{$\begin{smallmatrix} 1 \\ 2 \\ 3 \\ 4 \\ 5 \end{smallmatrix} $}
$\left(\begin{smallmatrix} 1 & 2 & 3 \\ 2 & 0 & 1 \end{smallmatrix}\right)$ & $\left(\begin{smallmatrix} 1 & 3 \\ 2 & 1 \end{smallmatrix}\right)$ & $\ytableaushort{1,2},\ytableaushort{1,3}$ & $\ytableaushort{13,2},\ytableaushort{13,2},\ydiagram{1,1}$ & & \multirow{4}{*}{$\ytableaushort{-+,-+,+,-}$} \\
\cline{1-4}
\vphantom{$\begin{smallmatrix} 1 \\ 2 \\ 3 \\ 4 \\ 5 \end{smallmatrix} $}
$\left(\begin{smallmatrix} 1 & 2 & 3 \\ 0 & 3 & 1 \end{smallmatrix}\right)$ & $\left(\begin{smallmatrix} 2 & 3 \\ 3 & 1 \end{smallmatrix}\right)$ & $\ytableaushort{1,3},\ytableaushort{2,3}$ & $\ytableaushort{12,3},\ytableaushort{12,3},\ydiagram{1,1}$ & & \\
\cline{1-4} 
\vphantom{$\begin{smallmatrix} 1 \\ 2 \\ 3 \\ 4 \\ 5 \end{smallmatrix} $}
$\left(\begin{smallmatrix} 1 & 2 & 3 \\ 3 & 0 & 1
\end{smallmatrix}\right)$ & $\left(\begin{smallmatrix} 1  & 3 \\ 3
& 1 \end{smallmatrix}\right)$ & $\ytableaushort{1,3},\ytableaushort{1,3}$ &
$\ytableaushort{12,3},\ytableaushort{13,2},\ydiagram{1,1}$
 & & \\
\cline{1-4} 
\vphantom{$\begin{smallmatrix} 1 \\ 2 \\ 3 \\ 4 \\ 5 \end{smallmatrix} $}
$\left(\begin{smallmatrix} 1 & 2 & 3 \\ 0 & 2 & 1
\end{smallmatrix}\right)$ & $\left(\begin{smallmatrix} 2 & 3 \\ 2 &
1 \end{smallmatrix}\right)$ & $\ytableaushort{1,2},\ytableaushort{2,3}$ &
$\ytableaushort{13,2},\ytableaushort{12,3},\ydiagram{1,1}$
 & & \\
\hline 
\vphantom{$\begin{smallmatrix} 1 \\ 2 \\ 3 \\ 4 \\ 5 \\ 6 \end{smallmatrix} $}
$\left(\begin{smallmatrix} 1 & 2 & 3 \\ 0 & 3 & 2
\end{smallmatrix}\right)$ & $\left(\begin{smallmatrix} 2 & 3 \\ 3 &
2 \end{smallmatrix}\right)$ & $\ytableaushort{2,3},\ytableaushort{2,3}$ &
$\ytableaushort{1,2,3},\ytableaushort{12,3},\ydiagram{1,1}$ &
\multirow{2}{*}{$\ydiagram{1,1,1},\ydiagram{2,1}$} &
\multirow{5}{*}{$\ytableaushort{-+,+,+,-,-}$} \\
\cline{1-4}
\vphantom{$\begin{smallmatrix} 1 \\ 2 \\ 3 \\ 4 \\ 5 \end{smallmatrix} $}
$\left(\begin{smallmatrix} 1 & 2 & 3 \\ 3 & 0 & 2 \end{smallmatrix}\right)$ & $\left(\begin{smallmatrix} 1 & 3 \\ 3 & 2 \end{smallmatrix}\right)$ & $\ytableaushort{2,3},\ytableaushort{1,3}$ & $\ytableaushort{1,2,3},\ytableaushort{13,2},\ydiagram{1,1}$ & &  \\
\cline{1-5}
\vphantom{$\begin{smallmatrix} 1 \\ 2 \\ 3 \\ 4 \\ 5 \end{smallmatrix} $}
$\left(\begin{smallmatrix} 1 & 2 & 3 \\ 2 & 1 & 0 \end{smallmatrix}\right)$ & $\left(\begin{smallmatrix} 1 & 2 \\ 2 & 1 \end{smallmatrix}\right)$ & $\ytableaushort{1,2},\ytableaushort{1,2}$ & $\ytableaushort{13,2},\ytableaushort{1,2,3},\ydiagram{1,1}$ & \multirow{2}{*}{$\ydiagram{2,1},\ydiagram{1,1,1}$} &  \\
\cline{1-4}
\vphantom{$\begin{smallmatrix} 1 \\ 2 \\ 3 \\ 4 \\ 5 \end{smallmatrix} $}
$\left(\begin{smallmatrix} 1 & 2 & 3 \\ 3 & 1 & 0 \end{smallmatrix}\right)$ & $\left(\begin{smallmatrix} 1 & 2 \\ 3 & 1 \end{smallmatrix}\right)$ & $\ytableaushort{1,3},\ytableaushort{1,2}$ & $\ytableaushort{12,3},\ytableaushort{1,2,3},\ydiagram{1,1}$ & & \\
\cline{1-5} 
\vphantom{$\begin{smallmatrix} 1 \\ 2 \\ 3 \\ 4 \\ 5 \end{smallmatrix} $}
$\left(\begin{smallmatrix} 1 & 2 & 3 \\ 3 & 2 & 0
\end{smallmatrix}\right)$ & $\left(\begin{smallmatrix} 1 & 2 \\ 3 &
2 \end{smallmatrix}\right)$ & $\ytableaushort{2,3},\ytableaushort{1,2}$ &
$\ytableaushort{1,2,3},\ytableaushort{1,2,3},\ydiagram{1,1}$
 & $\ydiagram{1,1,1},\ydiagram{1,1,1}$ &  \\
\hline
\end{tabular}
} \caption{The correspondence $\tau\mapsto(T_1,T_2,\nu)$ for
$\mathfrak{T}_3$ ($\mathrm{rank}\,\tau=2$); the maps $\Phi$,
$\Xi_{\mathfrak{k}}$, and $\Xi_{\mathfrak{s}}$} \label{F2}
\end{figure}

\begin{figure}[p]
{\footnotesize
\begin{tabular}{|c|c|c|c|c|c|}
\hline $\tau$ & $\sigma$ & $(\RSl(\sigma),\RSr(\sigma))$ & $(T_1,T_2,\nu)$ & $\Phi(\tau)=\Xik(\left(\begin{smallmatrix}\tau \\ 1_3\end{smallmatrix}\right))$ & $\Xis(\left(\begin{smallmatrix}\tau \\ 1_3\end{smallmatrix}\right))$ \\
\hline\hline
\vphantom{$\begin{smallmatrix} 1 \\ 2 \\ 3 \\ 4 \\ 5 \end{smallmatrix} $}
$\left(\begin{smallmatrix} 1 & 2 & 3 \\ 0 & 1 & 0 \end{smallmatrix}\right)$ & $\left(\begin{smallmatrix} 2 \\ 1 \end{smallmatrix}\right)$ & $\ytableaushort{1},\ytableaushort{2}$ & $\ytableaushort{12,3},\ytableaushort{12,3},\ydiagram{1}$ & \multirow{4}{*}{$\ydiagram{2,1},\ydiagram{2,1}$} & \multirow{4}{*}{$\ytableaushort{-+,-+,-+}$} \\
\cline{1-4}
\vphantom{$\begin{smallmatrix} 1 \\ 2 \\ 3 \\ 4 \\ 5 \end{smallmatrix} $}
$\left(\begin{smallmatrix} 1 & 2 & 3 \\ 0 & 2 & 0 \end{smallmatrix}\right)$ & $\left(\begin{smallmatrix} 2 \\ 2 \end{smallmatrix}\right)$ & $\ytableaushort{2},\ytableaushort{2}$ & $\ytableaushort{13,2},\ytableaushort{12,3},\ydiagram{1}$ & & \\
\cline{1-4}
\vphantom{$\begin{smallmatrix} 1 \\ 2 \\ 3 \\ 4 \\ 5 \end{smallmatrix} $}
$\left(\begin{smallmatrix} 1 & 2 & 3 \\ 0 & 0 & 1 \end{smallmatrix}\right)$ & $\left(\begin{smallmatrix} 3 \\ 1 \end{smallmatrix}\right)$ & $\ytableaushort{1},\ytableaushort{3}$ & $\ytableaushort{12,3},\ytableaushort{13,2},\ydiagram{1}$ & & \\
\cline{1-4}
\vphantom{$\begin{smallmatrix} 1 \\ 2 \\ 3 \\ 4 \\ 5 \end{smallmatrix} $}
$\left(\begin{smallmatrix} 1 & 2 & 3 \\ 0 & 0 & 2 \end{smallmatrix}\right)$ & $\left(\begin{smallmatrix} 3 \\ 2 \end{smallmatrix}\right)$ & $\ytableaushort{2},\ytableaushort{3}$ & $\ytableaushort{13,2},\ytableaushort{13,2},\ydiagram{1}$ & & \\
\hline
\vphantom{$\begin{smallmatrix} 1 \\ 2 \\ 3 \\ 4 \\ 5 \\ 6 \end{smallmatrix} $}
$\left(\begin{smallmatrix} 1 & 2 & 3 \\ 1 & 0 & 0 \end{smallmatrix}\right)$ & $\left(\begin{smallmatrix} 1 \\ 1 \end{smallmatrix}\right)$ & $\ytableaushort{1},\ytableaushort{1}$ & $\ytableaushort{12,3},\ytableaushort{1,2,3},\ydiagram{1}$ & \multirow{2}{*}{$\ydiagram{2,1},\ydiagram{1,1,1}$} & \multirow{5}{*}{$\ytableaushort{-+,-+,+,-}$} \\
\cline{1-4}
\vphantom{$\begin{smallmatrix} 1 \\ 2 \\ 3 \\ 4 \\ 5 \\ 6 \end{smallmatrix} $}
$\left(\begin{smallmatrix} 1 & 2 & 3 \\ 2 & 0 & 0 \end{smallmatrix}\right)$ & $\left(\begin{smallmatrix} 1 \\ 2 \end{smallmatrix}\right)$ & $\ytableaushort{2},\ytableaushort{1}$ & $\ytableaushort{13,2},\ytableaushort{1,2,3},\ydiagram{1}$ & & \\
\cline{1-5}
\vphantom{$\begin{smallmatrix} 1 \\ 2 \\ 3 \\ 4 \\ 5 \\ 6 \end{smallmatrix} $}
$\left(\begin{smallmatrix} 1 & 2 & 3 \\ 0 & 3 & 0 \end{smallmatrix}\right)$ & $\left(\begin{smallmatrix} 2 \\ 3 \end{smallmatrix}\right)$ & $\ytableaushort{3},\ytableaushort{2}$ & $\ytableaushort{1,2,3},\ytableaushort{12,3},\ydiagram{1}$ & \multirow{2}{*}{$\ydiagram{1,1,1},\ydiagram{2,1}$} & \\
\cline{1-4}
\vphantom{$\begin{smallmatrix} 1 \\ 2 \\ 3 \\ 4 \\ 5 \\ 6 \end{smallmatrix} $}
$\left(\begin{smallmatrix} 1 & 2 & 3 \\ 0 & 0 & 3
\end{smallmatrix}\right)$ & $\left(\begin{smallmatrix} 3 \\ 3
\end{smallmatrix}\right)$ & $\ytableaushort{3},\ytableaushort{3}$ &
$\ytableaushort{1,2,3},\ytableaushort{13,2},\ydiagram{1}$  & & \\
\cline{1-5}
\vphantom{$\begin{smallmatrix} 1 \\ 2 \\ 3 \\ 4 \\ 5 \\ 6 \end{smallmatrix} $}
$\left(\begin{smallmatrix} 1 & 2 & 3 \\ 3 & 0 & 0 \end{smallmatrix}\right)$ & $\left(\begin{smallmatrix} 1 \\ 3 \end{smallmatrix}\right)$ & $\ytableaushort{3},\ytableaushort{1}$ & $\ytableaushort{1,2,3},\ytableaushort{1,2,3},\ydiagram{1}$ & $\ydiagram{1,1,1},\ydiagram{1,1,1}$ & \\
\hline \hline
\vphantom{$\begin{smallmatrix} 1 \\ 2 \\ 3 \\ 4 \\ 5 \\ 6 \end{smallmatrix} $}
$\left(\begin{smallmatrix} 1 & 2 & 3 \\ 0 & 0 & 0 \end{smallmatrix}\right)$ & $\emptyset$ & $\emptyset,\emptyset$ & $\ytableaushort{1,2,3},\ytableaushort{1,2,3},\emptyset$ & $\ydiagram{1,1,1},\ydiagram{1,1,1}$ & $\ytableaushort{-+,-+,-+}$ \\
\hline
\end{tabular}}
\caption{The correspondence $\tau\mapsto(T_1,T_2,\nu)$ for
$\mathfrak{T}_3$ ($\mathrm{rank}\,\tau\leq 1$); the maps $\Phi$,
$\Xi_{\mathfrak{k}}$, and $\Xi_{\mathfrak{s}}$} \label{F3}
\end{figure}
\end{example}
\ytableausetup{nosmalltableaux}

\begin{remark}
\label{R:Travkin}
There might be a close relationship between the correspondence $\tau\mapsto(T_1,T_2,\nu)$ of Theorem \ref{thm:3.9} and Travkin's mirabolic Robinson-Schensted-Knuth correspondence \cite{Travkin}.
Let us explain this in more detail.

In \cite{Travkin} the diagonal action of $\GL_n$ on the variety $\GL_n/B \times \GL_n/B \times \C^n$
is considered. The orbits are parametrized by the so-called marked permutations, i.e., the pairs $(w,\beta)$ formed by a permutation
$w\in\permutationsof{n}$ and a subset $\beta\subset [n]$ satisfying
$$
\forall i,j\in[n],\ (i\notin\beta\ \mbox{and}\ j\in\beta)\ \Longrightarrow\ (i>j\ \mbox{or}\ w(i)>w(j));
$$
namely $(w,\beta)$ is mapped to the $\GL_n$-orbit of the triple $(B,wB,\sum_{i\in\beta}e_i)$, where $(e_1,\ldots,e_n)$ is the standard basis of $\C^n$.
This is a variant of the parametrization of the $\GL_n$-orbits on $\GL_n/B\times\GL_n/B\times\mathbb{P}(\C^n)$ 
due to Magyar-Weyman-Zelevinsky \cite{Magyar-Weyman-Zelevinsky}.

By using the conormal variety for $\GL_n/B \times \GL_n/B \times \C^n$ and 
the enhanced nilpotent cone $\mathcal{N}_{\gl_n}\times\C^n$ (due to Achar and Henderson \cite{Achar-Henderson}),
Travkin establishes a bijection between the set of marked permutations and the same set of triples as in Theorem \ref{thm:3.9}. He describes this bijection explicitly and call it the mirabolic Robinson-Schensted-Knuth correspondence.

There is a natural bijection between marked permutations $(w,\beta)$ and partial permutations
$\tau\in\ppermutationsof{n}$,
which is defined by $(w,\beta)\mapsto \tau:=w|_{[n]\setminus\beta_\mathrm{max}}$ where
$$
\beta_\mathrm{max}:=\{i\in \beta:(j>i\ \mbox{and}\ j\in\beta)\ \Rightarrow\ w(j)<w(i) \}.
$$
Therefore, Travkin's correspondence can also be expressed as an explicit bijection between partial permutations $\tau\in\ppermutationsof{n}$ and triples $(T_1,T_2,\nu)$ as in Theorem \ref{thm:3.9}.
This bijection seems to be quite different from ours, %%the one described in Theorem \ref{thm:3.9},
thus giving rise to a non-identical map $\ppermutationsof{n}\to\ppermutationsof{n}$ on partial permutations.
We have no indication whatsoever on a direct, combinatorial description nor on a possible geometric interpretation of this map.
\end{remark}

We mention the following representation theoretic interpretation of the fibers of the map $\Phi$ described in Theorem \ref{T2}.
Let $\rho_\lambda^{(n)}$ denote the irreducible representation of $\mathfrak{S}_n$ corresponding to the partition $\lambda\in\partitionsof{n}$. In the next statement, the notation $\boxtimes$ stands for the outer tensor product.

\begin{corollary}
\label{C3.2}
For every pair of partitions $(\lambda,\mu)\in\partitionsof{n}{\times}\partitionsof{n}$, the number of elements in the fiber $\Phi^{-1}((\lambda,\mu))$ is equal to
\[\sum_{r=0}^n\Big(\mbox{\rm multiplicity of $\rho_\lambda^{(n)}\boxtimes\rho_\mu^{(n)}$ in $\mathrm{Ind}_{\mathfrak{S}_r^2\times\mathfrak{S}_{n-r}^2}^{\mathfrak{S}_n^2}(\C\mathfrak{S}_r\boxtimes\sgn^{\otimes 2})$}\Big)\cdot\dim(\rho_\lambda^{(n)}\boxtimes\rho_\mu^{(n)})\]
where $\C\mathfrak{S}_r$ denotes the regular representation
of $\mathfrak{S}_r^2:=\mathfrak{S}_r\times\mathfrak{S}_r$ (defined
by left and right multiplication) and $\sgn$ denotes the
signature representation of
$\mathfrak{S}_{n-r}$.
\end{corollary}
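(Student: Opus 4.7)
The plan is to combine Theorem \ref{thm:3.9} with the decomposition of the regular representation and the classical Pieri rule.

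First, by Theorem \ref{thm:3.9} the fiber $\Phi^{-1}((\lambda,\mu))$ is in bijection with the set of triples $(T_1,T_2,\nu)$ where $T_1\in\mathrm{STab}(\lambda)$, $T_2\in\mathrm{STab}(\mu)$, and $\nu\in\partitionsof{r}$ for some $r\in\{0,\ldots,n\}$ satisfies $\nu\subset\lambda$, $\nu\subset\mu$, and both $\lambda\setminus\nu$ and $\mu\setminus\nu$ are column strips. Using $|\mathrm{STab}(\lambda)|=\dim\rho_\lambda^{(n)}$, this yields
\[
|\Phi^{-1}((\lambda,\mu))|=\sum_{r=0}^{n}N_r(\lambda,\mu)\cdot\dim(\rho_\lambda^{(n)}\boxtimes\rho_\mu^{(n)}),
\]
where $N_r(\lambda,\mu)$ is the number of $\nu\in\partitionsof{r}$ with $\nu\subset\lambda$, $\nu\subset\mu$, and $\lambda\setminus\nu$, $\mu\setminus\nu$ column strips.

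It therefore suffices to show that $N_r(\lambda,\mu)$ equals the multiplicity of $\rho_\lambda^{(n)}\boxtimes\rho_\mu^{(n)}$ in the induced representation $\mathrm{Ind}_{\mathfrak{S}_r^2\times\mathfrak{S}_{n-r}^2}^{\mathfrak{S}_n^2}(\C\mathfrak{S}_r\boxtimes\sgn^{\otimes 2})$. For this I would use the standard decomposition of the regular representation of $\mathfrak{S}_r$ as a $(\mathfrak{S}_r\times\mathfrak{S}_r)$-bimodule,
\[
\C\mathfrak{S}_r\cong\bigoplus_{\nu\in\partitionsof{r}}\rho_\nu^{(r)}\boxtimes\rho_\nu^{(r)},
\]
together with the fact that induction factors through the product decomposition $\mathfrak{S}_n^2=\mathfrak{S}_n\times\mathfrak{S}_n$. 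This allows me to rewrite the induced representation as
\[
\bigoplus_{\nu\in\partitionsof{r}}\Bigl(\mathrm{Ind}_{\mathfrak{S}_r\times\mathfrak{S}_{n-r}}^{\mathfrak{S}_n}(\rho_\nu^{(r)}\boxtimes\sgn)\Bigr)\boxtimes\Bigl(\mathrm{Ind}_{\mathfrak{S}_r\times\mathfrak{S}_{n-r}}^{\mathfrak{S}_n}(\rho_\nu^{(r)}\boxtimes\sgn)\Bigr).
\]

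Now apply Pieri's rule for induction with the sign representation: for any $\nu\vdash r$,
\[
\mathrm{Ind}_{\mathfrak{S}_r\times\mathfrak{S}_{n-r}}^{\mathfrak{S}_n}\bigl(\rho_\nu^{(r)}\boxtimes\sgn\bigr)\cong\bigoplus_{\lambda\supset\nu,\,\lambda\setminus\nu\text{ column strip}}\rho_\lambda^{(n)}.
\]
Consequently, the multiplicity of $\rho_\lambda^{(n)}\boxtimes\rho_\mu^{(n)}$ in the $\nu$-summand above equals $1$ exactly when $\nu\subset\lambda\cap\mu$ and both skew shapes $\lambda\setminus\nu$, $\mu\setminus\nu$ are column strips, and equals $0$ otherwise. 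Summing over $\nu\in\partitionsof{r}$ produces precisely $N_r(\lambda,\mu)$, which completes the identification and hence the proof.

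The computation is essentially a direct application of well-known representation-theoretic identities once Theorem \ref{thm:3.9} is in hand, so no step is really an obstacle; the only delicate point is to keep track of the two independent copies of $\mathfrak{S}_n$ in $\mathfrak{S}_n^2$ and to apply Pieri's rule to each factor separately.
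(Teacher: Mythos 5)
Your proposal is correct and follows essentially the same route as the paper's proof: Theorem~\ref{thm:3.9} reduces the fiber count to counting triples, and the Peter--Weyl decomposition of $\C\mathfrak{S}_r$ combined with Pieri's rule (applied separately to each factor of $\mathfrak{S}_n^2$) identifies the multiplicity with the number of admissible $\nu$. The only cosmetic difference is that you write $\rho_\nu^{(r)}\boxtimes\rho_\nu^{(r)}$ where the paper writes $\rho_\nu^{(r)}\boxtimes(\rho_\nu^{(r)})^*$, which is harmless since symmetric-group representations are self-dual.
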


\begin{proof} We have $\C\mathfrak{S}_r=\bigoplus_{\nu\in\partitionsof{r}}\rho_\nu^{(r)}\boxtimes(\rho_\nu^{(r)})^*$
and
$\sgn^{\otimes 2}=\rho^{(n-r)}_{(1^{n-r})}\boxtimes(\rho^{(n-r)}_{(1^{n-r})})^*$.
By the Pieri formula 
(a special case of the Littlewood-Richardson rule; see the formula (5) in \S 2.2 and Corollary 2 in \S7.3 of Fulton's book \cite{Fulton}), 
we have
\[\mathrm{Ind}_{\mathfrak{S}_r\times\mathfrak{S}_{n-r}}^{\mathfrak{S}_n}(\rho_\nu^{(r)}\boxtimes\rho^{(n-r)}_{(1^{n-r})})
=\bigoplus_{\substack{\lambda\in\partitionsof{n} \mbox{\scriptsize \
s.t.} \\ \mbox{\scriptsize $\lambda\setminus\nu$ is column
strip}}}\rho_\lambda^{(n)}\] hence the multiplicity
$m_r(\lambda,\mu)$ in the decomposition
\[\mathrm{Ind}_{\mathfrak{S}_r^2\times\mathfrak{S}_{n-r}^2}^{\mathfrak{S}_n^2}(\C\mathfrak{S}_r\boxtimes\sgn^{\otimes 2})
=\bigoplus_{(\lambda,\mu)\in\partitionsof{n}^2}(\rho_\lambda^{(n)}\boxtimes\rho_\mu^{(n)})^{\oplus m_r(\lambda,\mu)}\]
coincides with the number of partitions $\nu\in\partitionsof{r}$
(subdiagrams of $\lambda$ and $\mu$)
such that $\lambda\setminus\nu$ and $\mu\setminus\nu$ are column
strips. We also know that
$\dim\rho_\lambda^{(n)}=|\mathrm{STab}(\lambda)|$ 
since $ \STab(\lambda) $ gives a basis of $ \rho_\lambda^{(n)} $ via the construction of Specht module.  
%%and $\dim\rho_\mu^{(n)}=|\mathrm{STab}(\mu)|$. 
Hence, 
if we put 
\begin{equation*}
\Triplets{r}(\lambda, \mu) := 
\{ (T_1,T_2,\nu)\in\mathrm{STab}(\lambda){\times}\mathrm{STab}(\mu){\times}\partitionsof{r}:
\lambda\setminus\nu,\ \mu\setminus\nu\mbox{ are column strips}\} , 
\end{equation*}
we obtain 
\begin{equation*}
\sum_{r=0}^nm_r(\lambda,\mu)\dim\rho_\lambda^{(n)}\boxtimes\rho_\mu^{(n)}
= \Bigl|\bigcup_{r=0}^n \Triplets{r}(\lambda, \mu) \Bigr| \\
= |\Phi^{-1}((\lambda,\mu))|
\end{equation*}
where Theorems \ref{T2} and \ref{thm:3.9} are used.\ The proof is complete.
\end{proof}

The corollary only tells that 
the number of partial permutations and the dimension of the $ \mathfrak{S}_n^2 $-module specified above exactly match.  
However, in the classical Steinberg theory, the corresponding representation is 
the regular representation of the symmetric group, and 
it coincides with the Springer representation on each fiber.
Thus we propose the following conjecture.

\begin{conjecture}[Generalization of the {Springer representation}]\label{conjecture:gen.Springer.representation}
There exists a 
geometric way to construct an action of the group $\permutationsof{n}\times\permutationsof{n}$
on the top Borel-Moore homology space of 
$ H_{\mathrm{top}}^{\mathrm{BM}}(\conormalvar_{\Mat_n}) $, 
identifying the irreducible components of $ \conormalvar_{\Mat_n} $ with a basis of the 
induced representation 
\begin{equation*}
\bigoplus\nolimits_{r = 0}^n \; \Ind_{\mathfrak{S}_r^2 \times \mathfrak{S}_{n -r}^2}^{\mathfrak{S}_n^2} \!
\Bigl(\C\mathfrak{S}_r \boxtimes \sgn^{\otimes 2} \Bigr).
\end{equation*}
Moreover the components of $\conormalvar_{\Mat_n}$ parametrized by the elements in the fibers $ \Triplets{r}(\lambda, \mu) $ of $ \calorbit_{\lambda} \times \calorbit_{\mu} $ 
give a natural basis of $ \rho_{\lambda}^{(n)} \boxtimes \rho_{\mu}^{(n)} $ with multiplicities.
%The fibers $ \Triplets{r}(\lambda, \mu) $ of $ \calorbit_{\lambda} \times \calorbit_{\mu} $ 
%give a natural basis of $ \rho_{\lambda}^{(n)} \boxtimes \rho_{\mu}^{(n)} $ with 
%multiplicities.
\end{conjecture}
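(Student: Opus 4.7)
The plan is to construct the $\mathfrak{S}_n \times \mathfrak{S}_n$-action by an analogue of the Chriss--Ginzburg convolution framework, adapted to account for the two independent Borel subgroups acting on $\Mat_n$. In the classical Steinberg setting, the two copies of $\mathcal{B}$ in the Steinberg variety $T^*\mathcal{B} \times_{\mathcal{N}} T^*\mathcal{B}$ yield two $\mathfrak{S}_n$-actions that coincide (both being the diagonal Weyl group for $G$ acting by conjugation). Here, the genuinely asymmetric action $(b_1,b_2) \cdot x = b_1 x b_2^{-1}$ should produce two \emph{distinct} commuting $\mathfrak{S}_n$-actions via the two flag factors.

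First, I would introduce a ``doubled'' resolution of the form
\[
\widetilde{\conormalvar} := \{(F_\bullet, G_\bullet, x, y) \in \mathcal{B} \times \mathcal{B} \times \Mat_n \times \Mat_n : xy \in \mathfrak{n}(F_\bullet),\ yx \in \mathfrak{n}(G_\bullet)\},
\]
where $\mathfrak{n}(F_\bullet)$ denotes the nilradical of the Borel stabilizing the flag $F_\bullet$. There is a natural projection $\pi : \widetilde{\conormalvar} \to \conormalvar_{\Mat_n}$ forgetting the flag datum. The two copies of $\mathcal{B}$, one tracking the left $B_1$-action and the other tracking the right $B_2$-action, should generate two commuting $\mathfrak{S}_n$-actions on $H_{\mathrm{top}}^{\mathrm{BM}}(\conormalvar_{\Mat_n})$ via convolution on the relative fiber product $\widetilde{\conormalvar} \times_{\conormalvar_{\Mat_n}} \widetilde{\conormalvar}$, or alternatively via a nearby cycles construction analogous to Hotta--Kashiwara's interpretation of the Springer representation.

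Second, to identify the resulting representation, I would stratify $\conormalvar_{\Mat_n}$ by the rank $r$ of the partial permutation labeling each irreducible component (equivalently, the generic rank of $x$ on that component). The rank-$r$ stratum contains exactly $\binom{n}{r}^2 r!$ irreducible components, matching the dimension of the $r$-th summand $\mathrm{Ind}_{\mathfrak{S}_r^2 \times \mathfrak{S}_{n-r}^2}^{\mathfrak{S}_n^2}(\C\mathfrak{S}_r \boxtimes \sgn^{\otimes 2})$. One expects the rank-$r$ piece of the conormal variety to split geometrically into a ``nondegenerate'' factor (a Steinberg-like variety for $\GL_r$, contributing the regular representation $\C\mathfrak{S}_r$) and ``degenerate'' factors associated to the kernel sets $M(\tau)$ and $L(\tau)$, which should contribute the sign representations of $\mathfrak{S}_{n-r}$ on each side. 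The isotypic decomposition into $\rho_\lambda^{(n)} \boxtimes \rho_\mu^{(n)}$ would then match the bijection of Theorem~\ref{thm:3.9}, with the components parametrized by $\Triplets{r}(\lambda,\mu)$ providing the desired basis, the dimension count of Corollary~\ref{C3.2} already confirming the numerical consistency of this picture.

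The principal obstacle will be the rigorous construction of the convolution action, since the proposed map $\pi$ appears neither proper nor birational onto its image, unlike the classical Springer resolution. One likely needs to replace $\pi$ with a partial-flag variant compatible with the rank stratification, or to use nearby cycles for a suitable deformation of $\Mat_n$ that degenerates the $\GL_n \times \GL_n$-equivariant structure to the $B \times B$-equivariant one. A second difficulty is establishing the precise match between the geometric isotypic components and the fibers $\Triplets{r}(\lambda, \mu)$: in the classical Springer case, the analogous identification with the Robinson--Schensted correspondence rests on deep results of Steinberg, Joseph, and Spaltenstein, and a parallel theory adapted to partial permutations and the generalized correspondence of Theorem~\ref{thm:3.9} would be required.
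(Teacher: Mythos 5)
The statement you are attempting to prove is labelled a \emph{Conjecture} in the paper, and the paper does not supply a proof. The only supporting evidence in the paper is the dimension count of Corollary~\ref{C3.2}, and Remark~\ref{remark:dim.identity.HT} explicitly observes that even in the analogous Henderson--Trapa situation the geometric realization of such representations remains open. Your proposal, phrased in terms of ``the principal obstacle will be\ldots'' and ``a second difficulty is\ldots'', is itself a roadmap rather than a proof, and you candidly flag the points at which it has not been carried out; there is thus no completed argument on your side, and no proof in the paper to compare it with.

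On the merits of the sketch, there is already a technical issue in the opening construction. The conormal variety $\conormalvar_{\Mat_n}$ is defined relative to a \emph{fixed} pair of Borel subgroups (standardized to $B_1=B_2=B$ in Section~\ref{section-Phi}), i.e.\ it is the locus $\{(x,y)\in\Mat_n\times\Mat_n: xy\in\mathfrak{n},\ yx\in\mathfrak{n}\}$ for the fixed nilradical $\mathfrak{n}$. In your $\widetilde{\conormalvar}$, by contrast, the flags $F_\bullet,G_\bullet$ range over the full flag variety, so the map that forgets the flag data lands in the strictly larger locus $\{(x,y): xy\ \text{and}\ yx\ \text{nilpotent}\}$, not in $\conormalvar_{\Mat_n}$. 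As written, there is no natural projection $\pi:\widetilde{\conormalvar}\to\conormalvar_{\Mat_n}$; one would have to freeze one flag and let the other vary (or pass to a quotient) before any convolution or nearby-cycles machinery can be set up, and doing so is delicate precisely because the sought-for $\mathfrak{S}_n\times\mathfrak{S}_n$ symmetry does not act on $\Mat_n$ itself but only on the flag data. With that caveat, the broad outline you describe---stratifying the irreducible components by the rank $r$ of the parametrizing partial permutation (the count $\binom{n}{r}^2\,r!$ is correct and matches the dimension of the $r$-th induced summand), separating a $\GL_r$-Steinberg-type factor carrying $\C\mathfrak{S}_r$ from sign contributions attached to the degenerate parts indexed by $M(\tau)$ and $L(\tau)$, and then matching isotypic pieces against $\Triplets{r}(\lambda,\mu)$ via Theorem~\ref{thm:3.9}---is a reasonable, internally consistent plan that aligns with the numerical evidence the paper actually provides, but it is a plan rather than a proof.
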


\begin{remark}\label{remark:dim.identity.HT}
In \cite{Henderson-Trapa}, Henderson and Trapa used a similar equality between 
the number of orbits and the dimension of certain induced representation 
(see Proposition 3.1 in \cite{Henderson-Trapa}).  
However, it is still open to realize the representations geometrically.  
\end{remark}

\part{The image of the symmetrized and exotic moment maps for the double flag variety of type~AIII}

In this final part, we establish
our main results outlined in Sections \ref{section-1.3}--\ref{section-1.4}.

\section{Parametrization of the $K$-orbits in the double flag variety $\mathfrak{X}$}\label{section-8}

In the rest of the paper, we consider the setting given in Sections \ref{section-1.3} and \ref{section-2.2}.
In particular we have a polarized vector space 
\[V:=\C^{2n}=V^+\oplus V^-\quad \mbox{where}\quad V^+:=\C^n\times\{0\}^n,\ V^-:=\{0\}^n\times\C^n, \]
and consider the group $G=\GL_{2n}=\GL(V)$.   
The symmetric subgroup
\[K=\{g\in\GLnnC:g(V^+)=V^+,\ g(V^-)=V^-\}\cong\GLnC\times\GLnC\]
is the stabilizer of the polarization above.  
Finally, we consider the double flag variety
\[\mathfrak{X}=K/B_K\times G/P_\mathrm{S}\]
where
\begin{itemize}
\item
$B_K\subset K$ is a Borel subgroup, and there is no loss of generality in assuming that
$B_K=\left\{\left(\begin{array}{cc} b_1 & 0 \\ 0 & b_2
\end{array}\right):b_1,b_2\in B\right\}\cong
B\times B$ with $B=B_n^+\subset\GL_n$ the Borel subgroup of upper triangular matrices;
\item $P_\mathrm{S}:=\left\{\left(\begin{array}{cc} a & c \\ 0 & b
\end{array}\right):a,b\in \GLnC,\ c\in \MatnC\right\}\subset G$
is a Siegel parabolic subgroup.
\end{itemize}
Hence $\mathfrak{X}$ can be identified with the
direct product
\[\mathfrak{X}=K/B_K\times\mathrm{Gr}_n(\C^{2n})= \Flag(V^+)\times \Flag(V^-)\times\mathrm{Gr}_n(\C^{2n})\]
where $\Flag(V^\pm)$ denotes the variety of complete flags of
the subspace $V^\pm$ while $\mathrm{Gr}_n(\C^{2n})$ stands
for the Grassmann variety of $n$-dimensional subspaces of
$\C^{2n}$.

In the present section, we describe the parametrization of the $K$-orbits in the double flag variety $\mathfrak{X}$. 
It is easy to see that there is a bijection
\[
\begin{array}{c}
\mathrm{Gr}_n(\C^{2n})/B_K \xrightarrow{\;\;\sim\;\;} \mathfrak{X}/K\cong(K/B_K\times\mathrm{Gr}_n(\C^{2n}))/K,\ 
\\[1ex]
V\,\mathrm{mod}\,B_K\mapsto(B_K,V)\,\mathrm{mod}\,K .
\end{array}
\]
Thus we are reduced to parametrize the set of $B_K$-orbits in 
$\mathrm{Gr}_n(\C^{2n})$, which is achieved 
in the following theorem.  
Here again the partial permutations $ \ppermutationsof{n} $ studied in Section \ref{section-orbits} appear.  
We identify $ \tau \in \ppermutationsof{n} $ with the corresponding matrix in $ \Mat_n $.

\begin{theorem}\label{T3}
Let $(\mathfrak{T}_{n}^2)'$ denote the set of $(2n)\times n$ matrices of the form $\left(\begin{array}{c} \tau_1 \\ \tau_2 \end{array}\right)$,
with $\tau_1,\tau_2\in\mathfrak{T}_n$, and which are of rank $n$ 
(i.e., $\tau_1^{-1}(0)\cap\tau_2^{-1}(0)=\emptyset$). 
The group $\mathfrak{S}_n$ acts on the set $(\mathfrak{T}_n^2)'$ from the right by the multiplication of the permutation matrices. 
For $ \omega \in (\ppermutationsof{n})' $, 
let us denote by $ V_{\omega} := \Im \omega $ the $ n $-dimensional subspace generated by the columns of the matrix $ \omega $.
Then, the map
\[
(\mathfrak{T}_n^2)' \xrightarrow{\quad} \mathrm{Gr}_n(\C^{2n}), \qquad 
\omega=\left(\begin{array}{c} \tau_1 \\ \tau_2 \end{array}\right) 
\longmapsto V_\omega \]
induces a bijection
\[(\mathfrak{T}_n^2)'/\mathfrak{S}_n \xrightarrow{\;\;\sim\;\;} \mathrm{Gr}_n(\C^{2n})/B_K\cong\mathfrak{X}/K.\]
\end{theorem}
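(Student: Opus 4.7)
The plan is to establish the bijection in four steps. \textbf{Well-definedness} is immediate: for any permutation matrix $\sigma$, we have $V_{\omega\sigma} = V_\omega$, so the map descends to $(\mathfrak{T}_n^2)'/\mathfrak{S}_n \to \mathrm{Gr}_n(\C^{2n})/B_K$.

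\textbf{A $B_K$-invariant.} Next, I would introduce for every $W \in \mathrm{Gr}_n(\C^{2n})$ and every $(i,j) \in \{0,\ldots,n\}^2$ the integer
\[
d_W(i,j) := \dim\bigl(W \cap (F_i^+ \oplus F_j^-)\bigr),
\]
where $F_i^+ := \mathrm{span}(e_1,\ldots,e_i)$ and $F_j^- := \mathrm{span}(f_1,\ldots,f_j)$. Since $B_K = B \times B$ preserves each $F_i^+$ and $F_j^-$, $d_W$ is a $B_K$-invariant. A submodularity argument shows that the second differences $\Delta d_W(i,j) := d_W(i,j) - d_W(i-1,j) - d_W(i,j-1) + d_W(i-1,j-1)$ (with $d_W(-1,\cdot) = d_W(\cdot,-1) = 0$) take values in $\{0,1\}$, and moreover the row sums $\sum_{j \geq 0}\Delta d_W(i,j) = d_W(i,n) - d_W(i-1,n)$ and analogous column sums are each in $\{0,1\}$. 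Hence the support $C_W := \{(i,j) : \Delta d_W(i,j) = 1\}$ has size $n$ and is injective in rows $i \geq 1$ and columns $j \geq 1$.

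\textbf{Correspondence and injectivity.} For $\omega = \binom{\tau_1}{\tau_2} \in (\mathfrak{T}_n^2)'$, the $n$ column vectors $c_k = e_{\tau_1(k)} + f_{\tau_2(k)}$ (convention $e_0 = f_0 = 0$) have pairwise disjoint support in the standard basis of $\C^{2n}$, so a direct computation gives
\[
d_{V_\omega}(i,j) = \bigl|\{k : \tau_1(k) \leq i \text{ and } \tau_2(k) \leq j\}\bigr|,
\]
the inequality being automatic when the left side is $0$. Hence $\Delta d_{V_\omega}(i,j) = 1$ precisely when some column of $\omega$ has top-index $i$ and bottom-index $j$, and the $\mathfrak{S}_n$-orbit of $\omega$ is fully recovered from $C_{V_\omega}$ as the set $\{e_i + f_j : (i,j) \in C_{V_\omega}\}$. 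Therefore, if $V_\omega$ and $V_{\omega'}$ lie in the same $B_K$-orbit, then $C_{V_\omega} = C_{V_{\omega'}}$, forcing $\omega$ and $\omega'$ to differ only by a column permutation, giving injectivity.

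\textbf{Surjectivity.} Finally, to show that every $W$ lies in the $B_K$-orbit of some $V_\omega$, I would perform an explicit algorithmic reduction. Writing $W$ as the image of a rank-$n$ matrix $\binom{A}{B}$ and exploiting the lifted action $(b_1,b_2,g) \cdot \binom{A}{B} = \binom{b_1 A g^{-1}}{b_2 B g^{-1}}$ of $B_K \times \GLnC$, I would first apply $(b_1,g) \in B \times \GLnC$ to bring $A$ into a partial permutation $\tau_1 \in \mathfrak{T}_n$ by bottom-up Gaussian elimination (processing rows $A_n, A_{n-1}, \ldots, A_1$ in turn: clear entries at already-fixed pivot columns using $b_1^{-1}$, then select a new pivot in a free column and zero out the rest of the row by column operations in $g$). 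With $A = \tau_1$ fixed, I would then use the full left action of $b_2$ together with the residual stabilizer of $\tau_1$ inside $B \times \GLnC$ to reduce $B$ to a partial permutation $\tau_2$; the residual action will mirror the two-sided Borel action analyzed in Section~\ref{section-orbits}, so Proposition~\ref{P3} will apply. Since the action preserves rank, the final $\binom{\tau_1}{\tau_2}$ lies in $(\mathfrak{T}_n^2)'$. The hard part will be bookkeeping the joint stabilizer of $\tau_1$, whose block structure mixes the indices $I(\tau_1)$ and $L(\tau_1)$ nontrivially; a cleaner alternative, which I may instead pursue, is to bypass the algorithm and derive surjectivity by matching cardinalities, since both sides are finite (the right side by the finiteness hypothesis of Section~\ref{section-1.3}) and the injectivity already proven reduces the problem to counting.
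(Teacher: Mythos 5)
Your well-definedness and injectivity arguments are correct and in essence coincide with the paper's. The second-difference invariant $C_W$ you introduce is a clean formalization of exactly what the paper's proof extracts: the positions $(i,j)$ detected by jumps in the dimensions $\dim V_\omega \cap (F_i^+\oplus F_j^-)$, and the paper's three-case analysis (whether $\varepsilon_i^+$, $\varepsilon_j^-$, or $\varepsilon_i^++\varepsilon_j^-$ occurs as a column of $\omega$) is the same data recovered from $C_{V_\omega}$. Note that for injectivity you never need the submodularity argument proving $\Delta d_W\in\{0,1\}$ for \emph{arbitrary} $W$; since $\omega$ has columns with pairwise disjoint support, the formula for $d_{V_\omega}$ and its second difference is direct. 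The submodularity claim would only matter if you meant to upgrade $C_W$ to a complete $B_K$-invariant, but that is precisely the surjectivity statement in disguise.

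Surjectivity is where the gap lies. Your first route — a two-stage reduction of $\binom{A}{B}$ under $B\times B\times\mathrm{GL}_n$ — is the paper's strategy, but the step you flag as "the hard part," namely controlling the residual stabilizer of $\tau_1$ so that the bottom block can still be fully reduced, is the entire technical content, and you have not supplied it. The paper resolves it with a specific lemma (Lemma~\ref{L2}): for any $\tau\in\mathfrak{T}_n$ there exists $w\in\mathfrak{S}_n$ with $\tau wB\subset B\tau w$. This lets one permute the columns of $\tau_1$ so that the subsequent right $B$-action needed to normalize the bottom block can be traded for a left $B$-action that fixes the top block up to left $B$-multiplication; without some such statement the reduction of the second block is not justified. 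Your second route — matching cardinalities — does not work as stated: an injection between two finite sets does not give a bijection unless one independently knows $|(\mathfrak{T}_n^2)'/\mathfrak{S}_n| = |\mathrm{Gr}_n(\C^{2n})/B_K|$, and no such independent count of the $B_K$-orbits is available (indeed producing that count is equivalent to the theorem). Equivalently, to run a counting argument you would need to show $C_W$ is a \emph{complete} $B_K$-invariant on all of $\mathrm{Gr}_n(\C^{2n})$, which is again just surjectivity.
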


To prove the theorem, 
we use the following lemma.

\begin{lemma}
\label{L2} Let $B'\subset\GLnC$ be a Borel
subgroup which contains the standard torus. For any partial
permutation $\tau\in\mathfrak{T}_n$, there is a permutation
$w\in\mathfrak{S}_n$ such that
\[\tau wB'\subset B'\tau w.\]
\end{lemma}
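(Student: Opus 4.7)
The plan is to construct $w$ explicitly after a routine reduction to the case where $B'$ is the standard upper triangular Borel $B \subset \GL_n$. Writing $B' = \sigma_0 B \sigma_0^{-1}$ for some $\sigma_0 \in \mathfrak{S}_n$ (which is possible since $B'$ contains the standard torus), multiplying the desired inclusion $\tau w B' \subset B' \tau w$ on the left by $\sigma_0^{-1}$ and on the right by $\sigma_0$ transforms it into $\tilde\tau\,\tilde w\, B \subset B\,\tilde\tau\,\tilde w$, where $\tilde\tau := \sigma_0^{-1}\tau$ is again a partial permutation and $\tilde w := w\sigma_0$ ranges over $\mathfrak{S}_n$ as $w$ does. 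So from now on I assume $B' = B$.

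Writing $I(\tau) = \{i_1 < i_2 < \cdots < i_r\}$ for the image set of $\tau$, I would choose $w$ so that $w(n-r+k) = \tau^{-1}(i_k)$ for $k = 1, \ldots, r$, while letting $w$ send $\{1, \ldots, n-r\}$ bijectively onto $M(\tau)$ in any way. With this choice, $M := \tau w$ has zero columns in positions $1, \ldots, n-r$ and column $e_{i_k}$ in position $n-r+k$. The essential features are that the nonzero columns of $M$ are grouped at the rightmost positions and that the image indices $i_k$ appear in strictly increasing order from left to right.

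To verify $MB \subset BM$, I would take an arbitrary $b \in B$ and compute column by column, using upper-triangularity of $b$, that
\[
(Mb)_{\cdot,\,j} = \begin{cases} 0 & \text{if } j \leq n-r, \\[1mm] \displaystyle\sum_{m=1}^{\ell} b_{n-r+m,\,n-r+\ell}\, e_{i_m} & \text{if } j = n-r+\ell. \end{cases}
\]
I would then define $b' \in B$ by prescribing its column $i_k$ to be $\sum_{m=1}^{k} b_{n-r+m,\,n-r+k}\, e_{i_m}$ for each $k = 1, \ldots, r$, and its column $c$ to be $e_c$ for every $c \in [n] \setminus I(\tau)$. Because $i_m \leq i_k$ whenever $m \leq k$, every nonzero entry of column $i_k$ of $b'$ lies weakly above the diagonal, so $b'$ is upper triangular; its diagonal entries are $b_{n-r+k,\,n-r+k} \neq 0$ at positions $i_k$ and $1$ at positions $c \notin I(\tau)$, so $b' \in B$. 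A direct column comparison then shows $b'M = Mb$, hence $MB \subset BM$.

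The main obstacle, and the heart of the argument, is the combinatorial set-up of $w$: the two conditions that (i) the nonzero columns of $M$ sit in the last $r$ positions and (ii) the image indices appear in increasing order from left to right are precisely what makes the recipe for $b'$ produce an upper triangular matrix. Violating either of them would force entries of $b'$ below the diagonal, as illustrated by the rank one example $M = e_{1,1}$ in $\Mat_3$, for which $MB \not\subset BM$.
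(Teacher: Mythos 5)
Your proof is correct, and it is essentially the paper's argument viewed through the involution that swaps upper and lower triangular Borels: the paper reduces to $B'$ \emph{lower triangular} and permutes columns so the nonzero columns of $\tau w$ sit at positions $1,\ldots,r$ with $i_1<\cdots<i_r$, then verifies the inclusion on the torus and the subdiagonal generators $1_n+\alpha e_{j+1,j}$, whereas you reduce to upper triangular $B$, place the nonzero columns at positions $n-r+1,\ldots,n$, and verify by a direct column-by-column construction of $b'$. The central idea -- sort the columns of $\tau w$ so the nonzero ones are grouped at one end with increasing target indices, which is exactly what forces the conjugating matrix $b'$ back into the Borel -- is the same in both, and your direct computation plays the same role as the paper's check on generators.
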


\begin{proof}
There is no loss of generality in assuming that
$B'\subset\GLnC$ is the subgroup of \emph{lower triangular} matrices. 
Given $\tau \in \ppermutationsof{n}$, by
permuting the columns of $\tau$, we can find 
$w\in\mathfrak{S}_n$ such that
\[\tau w=\left(\begin{array}{cccccc}
1 & \cdots & r & r+1 & \cdots & n \\
i_1 & \cdots & i_r & 0 & \cdots & 0
\end{array}\right)\quad\mbox{with}\quad i_1<\cdots<i_r,\]
where $r=\mathrm{rank}\,\tau$. The group $B'$ is generated by the torus $T$ of diagonal matrices and the elementary matrices of the form $u_j(\alpha):=1_n+\alpha e_{j+1,j}$ for $j\in\{1,\ldots,n-1\}$. When $t=\mathrm{diag}(t_1,\ldots,t_n)$ is a diagonal matrix, we have
\[\tau wt=\mathrm{diag}(t'_1,\ldots,t'_n)\tau w\in B'\tau w\quad\mbox{where $t'_{i_k}=t_k$ and $t'_\ell=1$ for $\ell\notin\{i_1,\ldots,i_r\}$.}\]
When $u=u_j(\alpha)$ we have
\[\tau wu=\left\{\begin{array}{ll}
\tau w & \mbox{if $r\leq j\leq n-1$,} \\
\tau w+\alpha e_{i_{j+1},j}=(1_n+\alpha e_{i_{j+1},i_j})\tau w & \mbox{if $1\leq j\leq r-1$,}
\end{array}\right.\]
hence $\tau wu\in B'\tau w$ in both cases. Altogether we conclude that $\tau wB'\subset B'\tau w$.\ The proof of the lemma is complete.
\end{proof}

\begin{proof}[Proof of Theorem \ref{T3}]
First we show that $\mathrm{Gr}_n(\C^{2n})$ is the union of the
$B_K$-orbits through the points $V_\omega \in\mathrm{Gr}_n(\C^{2n})$ 
for $\omega\in(\mathfrak{T}_n^2)'$. 
Any point in $\mathrm{Gr}_n(\C^{2n})$ is of the form
$ V_a:=\Im a$ for a certain $(2n)\times n$ matrix $a$ of rank $n$.
We write $a\sim a'$ whenever $V_a$ and $ V_{a'}$ belong to the same
$B_K$-orbit. Let us consider $a=\left(\begin{array}{c} a_1 \\
a_2\end{array}\right)\in\Mat_{2n,n}(\C)$ of rank $n$.
Recall that $B_K=B\times B$ where
$B\subset\GLnC$ is the Borel subgroup of upper triangular matrices. 
By Proposition \ref{P3} we can write
$a_1=b_1\tau_1 b_2$ for some $(b_1,b_2)\in B\times B$ and some
partial permutation $\tau_1\in\mathfrak{T}_n$. Furthermore, by Lemma
\ref{L2}, there is a permutation $w\in\mathfrak{S}_n$ such that
$\tau_1wB\subset B\tau_1w$. Setting $a_2':=a_2b_2^{-1}w$ we have
\[a=\left(\begin{array}{c} a_1 \\ a_2\end{array}\right)=\left(\begin{array}{c} b_1\tau_1w \\ a'_2\end{array}\right)w^{-1}b_2\sim\left(\begin{array}{c} \tau_1w \\ a'_2\end{array}\right).\]
Invoking again Proposition \ref{P3}, we find a pair $(b'_1,b'_2)\in
B\times B$ and a partial permutation $\tau_2\in\mathfrak{T}_n$
such that $a'_2=b'_2\tau_2b'_1$. Moreover since $\tau_1wB\subset
B\tau_1w$, we can find $b''_1\in B$ such that
$\tau_1wb_1'^{-1}=b''_1\tau_1w$. Whence
\[a\sim\left(\begin{array}{c} \tau_1w \\ a'_2\end{array}\right)=\left(\begin{array}{c} \tau_1w \\ b'_2\tau_2b'_1\end{array}\right)\sim\left(\begin{array}{c} \tau_1wb_1'^{-1} \\ b'_2\tau_2\end{array}\right)=\left(\begin{array}{c} b''_1\tau_1w \\ b'_2\tau_2\end{array}\right)\sim\left(\begin{array}{c} \tau_1w \\ \tau_2\end{array}\right)\in(\mathfrak{T}_n^2)'.\]
Hence all the $B_K$-orbits have representatives 
of the form $V_\omega$ for some $ \omega\in (\mathfrak{T}_n^2)'$.
%% cover the Grassmann variety $\mathrm{Gr}_n(\C^{2n})$.

Clearly, if $\omega=\omega'w$ with $w\in\mathfrak{S}_n$, then we have $V_\omega=V_{\omega'}$. Hence the map 
\begin{equation*}
(\mathfrak{T}_n^2)'/\mathfrak{S}_n\to\mathrm{Gr}_n(\C^{2n})/B_K, \qquad\omega\mapsto B_K\cdot V_\omega
\end{equation*}
is well defined and surjective. It remains to show that this map is injective. 

Let $(\varepsilon^\pm_1,\ldots,\varepsilon^\pm_n)$ be the standard basis of $V^\pm$ and 
set $V_i^{\pm}=\langle\varepsilon_1^{\pm},\ldots,\varepsilon_i^{\pm}\rangle_\C$ 
for $ i = 0, 1, \dots, n $.  
Thus $ B_K $ is the stabilizer of the pair of complete flags 
$(V_0^{\pm},V_1^{\pm},\ldots,V_n^{\pm})\in\Flag(V^{\pm})$.
Note that, for $\omega,\omega'\in(\mathfrak{T}_n^2)'$, we have
\begin{eqnarray}\label{10new} 
V_\omega=V_{\omega'}\ \mathrm{mod}\,B_K & \implies & \dim (V_i^++V_j^-)\cap V_\omega=\dim (V_i^++V_j^-)\cap V_{\omega'}, \quad \forall i,j.
\end{eqnarray}
Since $\mathrm{rank}\,\omega=n$ by assumption, 
every column of $\omega$ is nonzero.\ In fact every column of $\omega$
has at most two nonzero coefficients and is of the form $\varepsilon_i^+$, $\varepsilon_i^++\varepsilon_j^-$, or $\varepsilon_j^-$ for some $i,j$ 
(regarding $\varepsilon_i^+,\varepsilon_j^-$ as $2n$-sized column matrices). 
According to these three cases, we have
%%The following characterization holds:
\begin{itemize}
\item $\varepsilon_i^+$ occurs as a column of $\omega$ if and only if $\dim V_i^+\cap V_\omega>\dim V_{i-1}^+\cap V_\omega$;
\item $\varepsilon_j^-$ occurs as a column of $\omega$ if and only if $\dim V_j^-\cap V_\omega>\dim V_{j-1}^-\cap V_\omega$;
\item $\varepsilon_i^++\varepsilon_j^-$ occurs as a column of $\omega$ if and only if 
\\
\hfil
$
\begin{aligned}
\dim (V_i^++V_j^-)\cap V_\omega >\dim (V_{i-1}^++V_j^-)\cap V_\omega 
&=\dim (V_i^++V_{j-1}^-)\cap V_\omega\\ &=\dim (V_{i-1}^++V_{j-1}^-)\cap V_\omega .
\end{aligned}
$
\hfil
\end{itemize}
This observation combined with (\ref{10new}) tells us that 
\begin{eqnarray*}
V_\omega=V_{\omega'}\ \mathrm{mod}\,B_K & \implies & \mbox{$\omega$ and $\omega'$ have the same columns (up to the order)} \\
 & \implies & \omega=\omega'w\ \mbox{ for some $w\in\mathfrak{S}_n$.}
\end{eqnarray*}
The proof of the theorem is now complete.\end{proof}

\section{Symmetrized moment map $\mathfrak{X}/K\to\mathcal{N}_\mathfrak{k}/K$}\label{section-9}%\label{section-5}

The purpose of this section is to compute the symmetrized moment map $\mathfrak{X}/K\to\mathcal{N}_\mathfrak{k}/K$ 
of (\ref{3}) in the case of the symmetric pair $ (G, K) = (\GLnnC,\GLnC\times\GLnC)$ under consideration 
(see Section \ref{section-1.3}).
We use the same notation as in Sections \ref{section-2.2} and \ref{section-8}, in particular 
$G=\GL_{2n}$ and
\[K=\Big\{\left(\begin{array}{cc} a & 0 \\ 0 & b \end{array}\right):a,b\in\GLnC\Big\}=\{g\in G:g(V^\pm)=V^\pm\}\cong\GLnC\times\GLnC\]
where $V^+=\C^n\times\{0\}^n$ and $V^-=\{0\}^n\times\C^n$, so that $V^+\oplus V^-=\C^{2n}$. 
Their Lie algebras are denoted by $\lie{g}=\lie{gl}_{2n}$ and 
\[\mathfrak{k}=\mathrm{Lie}(K)=\Big\{\left(\begin{array}{cc} \alpha & 0 \\ 0 & \beta \end{array}\right):\alpha,\beta\in\MatnC\Big\}\cong\lie{gl}_n\times\lie{gl}_n.\]
Recall the projection 
\begin{equation*}
(\cdot)^\theta : \lie{g} \longrightarrow \mathfrak{k}, \qquad
x=\begin{pmatrix}x_1 & x_2 \\ x_3 & x_4\end{pmatrix} 
\longmapsto x^\theta=\begin{pmatrix}x_1 & 0 \\ 0 & x_4\end{pmatrix}
\end{equation*}
along the Cartan decomposition.  
Since the nilpotent cone $\mathcal{N}_\mathfrak{k}$ is the direct product $\mathcal{N}_{\lie{gl}_n}\times\mathcal{N}_{\lie{gl}_n}$, 
the nilpotent $K$-orbits in $\mathcal{N}_\mathfrak{k}$ are of the form $\mathcal{O}_\lambda\times\mathcal{O}_\mu$ 
with a pair of partitions $(\lambda,\mu) \in \partitionsof{n}^2 $:
\[\mathcal{N}_\mathfrak{k}/K=\{\mathcal{O}_\lambda\times\mathcal{O}_\mu:\lambda,\mu\in\partitionsof{n}\}\cong \partitionsof{n}\times \partitionsof{n}.\]
Our double flag variety is identified with 
%%$\mathfrak{X}$ is of the form
\[\mathfrak{X}=K/B_K\times \mathrm{Gr}_n(\C^{2n})\cong\Flag(V^+)\times\Flag(V^-)\times\mathrm{Gr}_n(\C^{2n}),\]
where $B_K=B\times B$ is the Borel subgroup of $K$
corresponding to the subgroup $B\subset \GL_n$ of upper-triangular matrices.
%% As before we assume that $B_1,B_2$ contain the standard torus of diagonal matrices.
From Theorem \ref{T3} we have a parametrization of the set of $ K $-orbits $\mathfrak{X}/K$:
\begin{equation}
\label{recall-orbitsofX}
(\mathfrak{T}_n^2)'/\mathfrak{S}_n 
\xrightarrow{\;\;\sim\;\;} \mathfrak{X}/K, \qquad 
\omega \longmapsto \orbitofX_\omega:=K\cdot(B_K,V_\omega)
\end{equation}
where $(\mathfrak{T}_n^2)'$ is the set of full rank matrices of %%rank $n$ of
the form $\omega=\left(\begin{array}{c} \tau_1 \\ \tau_2
\end{array}\right) \; (\tau_1,\tau_2\in\mathfrak{T}_n) $, and
$V_\omega = \Im \omega\in\mathrm{Gr}_n(\C^{2n})$.

As before, we identify the flag variety $K/B_K$ (resp. $\mathrm{Gr}_n(\C^{2n})\cong G/P_\mathrm{S}$) with the collection of all Borel subalgebras $\mathfrak{b}'_K\subset \mathfrak{k}$ (resp. parabolic subalgebras $\mathfrak{p}'\subset\lie{g}$ conjugate to $\mathrm{Lie}(P_\mathrm{S})$). With this identification, $\mathfrak{Z}_\omega$ is regarded as the $K$-orbit through the pair $(\mathfrak{b}\times\mathfrak{b},\mathfrak{p}_\omega)$ where $\mathfrak{b}:=\mathrm{Lie}(B)\subset\lie{gl}_n$ is the subalgebra of upper-triangular matrices, and $\mathfrak{p}_\omega:=\{x\in\lie{gl}_{2n}:x(V_\omega)\subset V_\omega\}$. We further denote $\mathfrak{n}:=\mathfrak{nil}(\mathfrak{b})$ and $\mathfrak{u}_\omega:=\mathfrak{nil}(\mathfrak{p}_\omega)=\{x\in\lie{gl}_{2n}:\Im x\subset V_\omega\subset\ker x\}$, 
the nilradicals of $ \lie{b} $ and $ \lie{p}_{\omega} $ respectively.

Recall the conormal variety 
%%corresponding to the $K$-action on $\mathfrak{X}$ is described as follows
%
\begin{equation}
\label{5.1} \mathcal{Y}=\{(\mathfrak{b}_K',\mathfrak{p}',x)\in
K/B_K\times G/P_\mathrm{S}\times
\lie{g}:x^\theta\in\mathfrak{nil}(\mathfrak{b}_{K}'),\
x\in\mathfrak{nil}(\mathfrak{p}')\} , 
\end{equation} 
which is a union of conormal bundles 
\begin{eqnarray}\label{11}
T_{\orbitofX_\omega}^*\mathfrak{X} & = & \{(\mathfrak{b}'_K,\mathfrak{p}',x)\in\mathcal{Y}:(\mathfrak{b}'_K,\mathfrak{p}')\in\orbitofX_\omega\} \\
 & = & K\cdot\{(\mathfrak{b}\times\mathfrak{b},\mathfrak{p}_\omega,x)\in\mathfrak{X}\times\MatnC:x\in\mathfrak{u}_\omega,\ x^\theta\in\lie{n}{\times}\lie{n}\} \nonumber
\end{eqnarray}
over the $ K $-orbits $\orbitofX_\omega$ (see Section \ref{section-2.1.1}), 
and the map
$\pi_\mathfrak{k}:\mathcal{Y}\to\mathfrak{k} $ defined by 
$ \pi_\mathfrak{k}(\mathfrak{b}_K',\mathfrak{p}',x) = x^\theta$.
Then the symmetrized moment map $\mathfrak{X}/K\to\mathcal{N}_\mathfrak{k}/K$ in (\ref{3}) induces a map 
between parameter sets of orbits:
%%corresponds to the following map between the combinatorial parameters:
\[\Xik:(\mathfrak{T}_n^2)' \xrightarrow{\quad} \partitionsof{n}{\times}\partitionsof{n},\qquad 
\omega\longmapsto (\lambda,\mu) \]
where $(\lambda,\mu)$ is the pair of partitions such that
$\overline{\pi_\mathfrak{k}(T_{\orbitofX_\omega}^*\mathfrak{X})}=\overline{\mathcal{O}_\lambda}\times\overline{\mathcal{O}_\mu}$. %% holds.
Though it is difficult to give a combinatorial algorithm to describe $ \Xik $ for all $ \omega $'s, 
%%to determine this map,
we have an efficient algorithm for ``generic'' ones using 
%%based on the former algorithm for 
Theorem \ref{T2}.
Namely we prove the following theorem.

\begin{theorem}\label{T4}
Whenever $\omega$ is of the form $\omega=\left(\begin{array}{c} \tau \\ 1_n \end{array}\right)$ with a partial permutation $\tau\in\mathfrak{T}_n$,
we have
$\Xik(\omega)=\Phi(\tau)$ where $\Phi:\mathfrak{T}_n\to\partitionsof{n}{\times}\partitionsof{n}$ is 
the generalized Steinberg map described in Theorem \ref{T2}.
\end{theorem}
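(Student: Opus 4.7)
The plan is to reduce the computation of $\Xik(\omega)$ for $\omega=\left(\begin{smallmatrix}\tau \\ 1_n\end{smallmatrix}\right)$ directly to the previously computed generalized Steinberg map $\Phi(\tau)$, by making the conormal bundle fiber over the base point $(\lie{b}\times\lie{b},\mathfrak{p}_\omega)$ completely explicit.

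First I would note that $V_\omega=\Im\omega=\{(\tau v,v):v\in\C^n\}\subset V^+\oplus V^-$ is the graph of $\tau$. Writing a general $x=\left(\begin{smallmatrix}x_1 & x_2 \\ x_3 & x_4\end{smallmatrix}\right)\in\lie{gl}_{2n}$, the conditions $\Im x\subset V_\omega$ and $V_\omega\subset\ker x$ defining $\mathfrak{u}_\omega=\mathfrak{nil}(\mathfrak{p}_\omega)$ translate into $x_1=\tau x_3$, $x_2=\tau x_4$, $x_2=-x_1\tau$, and $x_4=-x_3\tau$. Setting $y:=x_3$ yields the explicit parametrization
\[
\mathfrak{u}_\omega=\Bigl\{x(y):=\begin{pmatrix}\tau y & -\tau y\tau \\ y & -y\tau\end{pmatrix}:y\in\Mat_n\Bigr\},\qquad x(y)^\theta=\begin{pmatrix}\tau y & 0 \\ 0 & -y\tau\end{pmatrix},
\]
so that the extra condition $x^\theta\in\lie{n}\times\lie{n}$ appearing in \eqref{5.1} amounts exactly to $\tau y\in\lie{n}$ and $y\tau\in\lie{n}$.

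Combined with the description \eqref{11} of the conormal bundle, this identifies the fiber of $T^*_{\orbitofX_\omega}\mathfrak{X}$ over $(\lie{b}\times\lie{b},\mathfrak{p}_\omega)$ with the slice $\{x=\tau\}$ of the conormal variety $\mathcal{Y}_{\Mat_n}$ of Proposition \ref{P2}, parametrized by those $y\in\Mat_n$ with $\tau y,y\tau\in\lie{n}$. On this slice, $\pi_\mathfrak{k}$ sends $x(y)$ to $(\tau y,-y\tau)=(\varphi_1(\tau,y),-\varphi_2(\tau,y))$, where $\varphi=(\varphi_1,\varphi_2)$ is the map of Section \ref{section-3.3}. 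By $K$-equivariance of $\pi_\mathfrak{k}$ and Proposition \ref{P1}, the closure $\overline{\pi_\mathfrak{k}(T^*_{\orbitofX_\omega}\mathfrak{X})}$ equals the $K$-saturation of this image.

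To conclude, I would invoke Lemma \ref{L1} together with the elementary remark that any nilpotent matrix $A\in\Mat_n$ is $\GLnC$-conjugate to $-A$ (they share the same Jordan type); the sign in the second coordinate therefore has no effect on the $K$-orbit closure. Lemma \ref{L1} then identifies the $K$-saturation with $\overline{\mathcal{O}_\lambda\times\mathcal{O}_\mu}$ where $(\lambda,\mu)=\Phi(\tau)$, so that $\Xik(\omega)=(\lambda,\mu)=\Phi(\tau)$. The only non-routine step is the explicit description of $\mathfrak{u}_\omega$ afforded by the very special form of $\omega$; once this is in hand, the identification with the combinatorics of $\Phi$ is essentially tautological via $K$-equivariance.
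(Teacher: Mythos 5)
Your proposal is correct and follows essentially the same route as the paper's proof: compute $\mathfrak{u}_\omega$ explicitly using the graph description of $V_\omega$ to obtain $x(y)=\left(\begin{smallmatrix}\tau y & -\tau y\tau\\ y & -y\tau\end{smallmatrix}\right)$, observe that the fiber of the conormal bundle over the base point is exactly the slice $\{(\tau,y):\tau y,y\tau\in\lie{n}\}$ of $\mathcal{Y}_{\Mat_n}$, and apply Lemma \ref{L1}. Your explicit remark that $-y\tau$ and $y\tau$ have the same Jordan type is a slight sharpening of a step the paper leaves implicit; otherwise the two proofs coincide.
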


To prove the theorem, we use 
the following characterization of the map $\Xik$
(which immediately follows from (\ref{11}) and the definition of the map $\Xik$).

\begin{lemma} \label{L3}
If $\Xik(\omega)=(\lambda,\mu)$, then $\mathcal{O}_\lambda\times\mathcal{O}_\mu$ is characterized as the nilpotent orbit of $\mathcal{N}_\mathfrak{k}$ which intersects the subspace
\[\conormbImk{\omega}:=\{x^\theta:x\in\mathfrak{u}_\omega\}\cap(\lie{n}\times\lie{n})\]
along a dense open subset.
\end{lemma}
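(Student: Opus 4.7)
The plan is to unwind the definition of $\Xik(\omega)$ by means of equation~(\ref{11}) and the $K$-equivariance of $\pi_\mathfrak{k}$, then exploit the irreducibility of $\conormbImk{\omega}$ together with the finiteness of the $K$-orbits in $\mathcal{N}_\mathfrak{k}$ to pin down the generic orbit.

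First, from (\ref{11}) and the $K$-equivariance of $\pi_\mathfrak{k}:(\mathfrak{b}'_K,\mathfrak{p}',x)\mapsto x^\theta$, I would compute
\[
\pi_\mathfrak{k}\bigl(T^*_{\orbitofX_\omega}\mathfrak{X}\bigr)
\;=\; K\cdot\bigl\{x^\theta : x\in\mathfrak{u}_\omega,\ x^\theta\in\mathfrak{n}\times\mathfrak{n}\bigr\}
\;=\; K\cdot\conormbImk{\omega}.
\]
By the definition of $\Xik(\omega)=(\lambda,\mu)$, the closure of this set in $\mathfrak{k}$ equals $\overline{\mathcal{O}_\lambda}\times\overline{\mathcal{O}_\mu}$. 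Next I would observe that $\conormbImk{\omega}$ is actually a \emph{linear} subspace of $\mathfrak{k}$, being the intersection of the image of the linear projection $(\cdot)^\theta|_{\mathfrak{u}_\omega}$ with the vector subspace $\mathfrak{n}\times\mathfrak{n}$; in particular it is irreducible. Moreover, since every element of $\mathfrak{n}\times\mathfrak{n}$ is nilpotent in $\mathfrak{k}$, we have $\conormbImk{\omega}\subseteq\mathcal{N}_\mathfrak{k}$.

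Because $\mathcal{N}_\mathfrak{k}$ is the disjoint union of the finitely many $K$-orbits $\mathcal{O}_{\lambda'}\times\mathcal{O}_{\mu'}$, the irreducibility of $\conormbImk{\omega}$ forces exactly one such orbit, call it $\mathcal{O}^\star$, to meet $\conormbImk{\omega}$ in a dense open subset (the other intersections being proper closed subsets by irreducibility). To conclude the proof it suffices to verify that $\mathcal{O}^\star=\mathcal{O}_\lambda\times\mathcal{O}_\mu$, which I would do by a double-inclusion of closures: the density of $\mathcal{O}^\star\cap\conormbImk{\omega}$ in $\conormbImk{\omega}$ yields $\overline{\mathcal{O}^\star}\supseteq\conormbImk{\omega}$, and $K$-stability of $\overline{\mathcal{O}^\star}$ upgrades this to $\overline{\mathcal{O}^\star}\supseteq\overline{K\cdot\conormbImk{\omega}}=\overline{\mathcal{O}_\lambda}\times\overline{\mathcal{O}_\mu}$; conversely, picking any point of $\mathcal{O}^\star\cap\conormbImk{\omega}$ shows $\mathcal{O}^\star\subseteq K\cdot\conormbImk{\omega}\subseteq\overline{\mathcal{O}_\lambda}\times\overline{\mathcal{O}_\mu}$. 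These two inclusions force $\overline{\mathcal{O}^\star}=\overline{\mathcal{O}_\lambda}\times\overline{\mathcal{O}_\mu}$, hence $\mathcal{O}^\star=\mathcal{O}_\lambda\times\mathcal{O}_\mu$ since two $K$-orbits with the same closure coincide.

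The argument is essentially formal; the only point deserving a moment of attention is the fact that $\conormbImk{\omega}$ is literally a linear subspace (hence irreducible), which is precisely what legitimizes the passage from the $K$-saturation $K\cdot\conormbImk{\omega}$ used to define $\Xik(\omega)$ to the ``slice'' $\conormbImk{\omega}$ itself in the characterization of $\mathcal{O}_\lambda\times\mathcal{O}_\mu$.
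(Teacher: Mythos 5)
Your proof is correct and follows the same route the paper intends: the paper simply asserts that the lemma ``immediately follows from (\ref{11}) and the definition of the map $\Xik$,'' and your argument is exactly the careful unwinding of that assertion (equivariance of $\pi_{\mathfrak{k}}$, linearity hence irreducibility of $\conormbImk{\omega}$, finiteness of $K$-orbits in $\mathcal{N}_{\mathfrak{k}}$, and the comparison of closures). Nothing to add.
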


\begin{proof}[Proof of Theorem \ref{T4}]
Let us characterize the elements of the space $\conormbImk{\omega}$.  
Put $ x = \mattwo{x_1}{x_2}{x_3}{x_4} \in\Mat_{2n} $, 
%%\left(\begin{smallmatrix}
%%x_1 & x_2 \\ x_3 & x_4
%%\end{smallmatrix}\right)\in\Mat_{2n}$, 
and note that $ x^\theta = \mattwo{x_1}{0}{0}{x_4} $.
%%\left(\begin{smallmatrix}
%%x_1 & 0 \\ 0 & x_4
%%\end{smallmatrix}\right)$. 
Then $ x $ belongs to the nilpotent radical $ \mathfrak{u}_\omega$ if and only if $\Im x\subset\Im \omega\subset \ker x$, hence
\begin{align}\label{12}
x\in\mathfrak{u}_\omega 
&
\iff \left\{\begin{array}{l}
x\left(\begin{array}{cc}
\tau \\ 1_n
\end{array}\right)=0 \\[3ex]
\left(\begin{array}{cc}
1_n & -\tau
\end{array}\right)x=0
\end{array}\right.
\iff
\left\{\begin{array}{l}
x_1\tau + x_2 =0 \\ x_3\tau+x_4=0 \\ x_1-\tau x_3=0 \\ x_2-\tau x_4=0
\end{array}\right.
\\
& \iff
x=\left(\begin{array}{cc}
\tau x_3 & -\tau x_3\tau \\ x_3 & -x_3\tau
\end{array}\right).
\notag
\end{align}
This yields
\[
\conormbImk{\omega}=\Big\{\left(\begin{array}{cc}
\tau y & 0 \\ 0 & -y\tau
\end{array}\right):y\in\Mat_n\ \mbox{such that}\ (\tau y,y\tau)\in\lie{n} \times \lie{n}\Big\}.
\]
By Lemma \ref{L1}, the $K$-orbit
\[\mathcal{O}_\lambda\times\mathcal{O}_\mu\cong\Big\{\left(\begin{array}{cc}
y_1 & 0 \\ 0 & y_2
\end{array}\right):y_1\in\mathcal{O}_\lambda,\ y_2\in\mathcal{O}_\mu\Big\},\quad\mbox{where}\ (\lambda,\mu)=(\Phi_1(\tau),\Phi_2(\tau)),\]
intersects $\conormbImk{\omega}$ along a dense open subset.  
According to Lemma \ref{L3}, this implies $\Xik(\omega)=(\Phi_1(\tau),\Phi_2(\tau))=\Phi(\tau)$.
\end{proof}

%%%%%%%%%%%%%%%%%%%%%%

\section{Exotic moment map $\mathfrak{X}/K\to\mathcal{N}_\mathfrak{s}/K$}\label{section-10}

We keep the notation and setting of Section \ref{section-9}.
In this section the Cartan space
\[\mathfrak{s}=\Big\{\left(\begin{array}{cc} 0 & \gamma \\ \delta & 0 \end{array}\right):\gamma,\delta\in\MatnC\Big\}\]
plays an important role.  
We define the projection $(\cdot)^{-\theta}:\lie{g}\to\mathfrak{s}$ along the Cartan decomposition 
$ \lie{g} = \lie{k} \oplus  \lie{s} $, i.e., 
\begin{equation*}
x=\begin{pmatrix}x_1 & x_2 \\ x_3 &
x_4\end{pmatrix}
\longmapsto
x^{-\theta}=\begin{pmatrix} 0 & x_2 \\ x_3 & 0\end{pmatrix} .
\end{equation*}
We define the map 
\begin{equation*}
\pi_\mathfrak{s}:\mathcal{Y} \xrightarrow{\quad}\mathfrak{s}, \qquad
(\mathfrak{b}'_K,\mathfrak{p}',x) \longmapsto x^{-\theta}
\end{equation*}
for the conormal variety $\mathcal{Y}$ in (\ref{5.1}).
As already pointed out in Section \ref{section-1.3}, we know that the image of
this map is contained in the nilpotent variety $\mathcal{N}_\mathfrak{s}$, 
%%Recall that $\mathcal{N}_\mathfrak{s}$ 
which consists of finitely many
$K$-orbits parametrized by $\signpartitionsof{2n}$, 
the set of signed Young diagrams of size $2n$ (see Section \ref{section-2.2} for definition).

For $\omega\in (\mathfrak{T}_n^2)'$,
we have the $K$-orbit $\orbitofX_\omega $ in $ \mathfrak{X}$
(see (\ref{recall-orbitsofX})).
%%\[\orbitofX_\omega
%=K\cdot (B_K,V_\omega) \subset\mathfrak{X}
%=K/B_K\times \mathrm{Gr}_n(\C^{2n}).\] 
%Recall that $V_\omega:=\Im \omega$ and 
%$B_K=B\times B$ is the Borel subgroup of $K$ corresponding 
%to the subgroup $B\subset \GL_n$ of upper-triangular matrices.
As before, we take the conormal bundle
$T^*_{\orbitofX_\omega}\mathfrak{X}\subset\mathcal{Y}$ over $\orbitofX_\omega$ and 
conclude that the set
$\overline{\pi_\mathfrak{s}(T^*_{\orbitofX_\omega}\mathfrak{X})}$
is an irreducible, $K$-stable, closed subvariety of
$\mathcal{N}_\mathfrak{s}$. Hence it coincides with the closure of a
unique $K$-orbit $ \frakorbit_{\Lambda} $. In this way, we get a map
\[
\Xis:(\mathfrak{T}_n^2)' \xrightarrow{\quad} \signpartitionsof{2n},
\qquad
\omega \mapsto \Lambda, 
\]
which is the combinatorial incarnation of the exotic moment map $\mathfrak{X}/K\to\mathcal{N}_\mathfrak{s}/K$ in (\ref{3}).
In the theorem below, we compute $\Xis(\omega)$ for a ``generic'' 
$ \omega = \begin{pmatrix} \tau\, \\ \,1_n \end{pmatrix} $ as 
in the case of Theorem~\ref{T4}.
We use the following combinatorial definition.

\begin{definition}\label{D6.1}
Let $(T_1,T_2)$ be a pair of Young tableaux of the same shape $\lambda$ with entries from $\{1,\ldots,n\}$. Let $\ell_1<\cdots<\ell_s$ (resp. $m_1<\cdots<m_s$) be a list of entries in $\{1,\ldots,n\}$ which do not appear in the tableau $T_1$ (resp. $T_2$).
We define a skew tableau 
\[
S = 
\tableaul{m_1 \\ { }^{\vdots} \\ m_s}*T_2\bigtriangleup
T_1*\tableaul{\ell_1 \\ { }^{\vdots} \\
\ell_{s}}
\]
by the following algorithm:
\begin{itemize}
\item Define a tableau
$\widehat{T}_1:=T_1\leftarrow \ell_s\leftarrow\cdots\leftarrow \ell_1$ by row insertion.
Then its shape $\widehat{\lambda}$ contains at most one extra box in each row comparing to $\lambda$.
Let $\widehat{T}_2$ be the tableau of the same shape $\widehat{\lambda} = \shape{\widehat{T}_1} $,  
 obtained as follows.  Place $ T_2 $ in the subshape $ \lambda \subset \widehat{\lambda} $.  
Then add to $T_2$ extra boxes with the entries
$n+1,\ldots,n+s$ from top to bottom.
\item Let 
$\overline{T}_2:=m_s\to\cdots\to m_1\to \widehat{T}_2$ be a tableau obtained by column insertion.
Its shape $\overline{\lambda}$ contains at most one extra box in each row comparing to $\widehat{\lambda}$.
Let $\mu^{(s)}$ denote the vertical Young diagram of size $s$.
Then, there is a unique skew tableau of shape $\overline{\lambda}\setminus\mu^{(s)}$
whose rectification by jeu de taquin is the tableau $\widehat{T}_1$.
Define $S$ as this skew tableau.
\end{itemize}
\end{definition}

It follows from \cite[Proposition in \S1.1]{Fulton} that the procedure in the definition is well defined. 

\begin{lemma}
Assume that $(T_1,T_2)=(\RSl(w),\RSr(w))$ is the pair of Young
tableaux corresponding to the bijection $w:j_k\mapsto i_k$. 
Let $ S $ be the skew tableau obtained in Definition \ref{D6.1}.
Then $S$ is obtained from the Robinson-Schensted tableau
\[\RSl\left(
\begin{array}{ccccccccccccccccccccccc}
m_s & \cdots & m_1 & j_1 & \cdots & j_r & n+1 & \cdots & n+s
\\
-s & \cdots & -1 & i_1 & \cdots & i_r & \ell_s & \cdots & \ell_1
\end{array}
\right)\]
by erasing the boxes of entries $-1,\ldots,-s$. 
\end{lemma}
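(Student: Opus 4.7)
I will prove the lemma by analyzing the Robinson-Schensted tableau $P$ of the displayed biword. Sorting the top row as $1,\dots,n+s$, the corresponding bottom word reads $w_1\cdots w_n\,\ell_s\cdots\ell_1$, where $w_k=-t$ if $k=m_t$ and $w_k=i_u$ if $k=j_u$, so that $P=\rinsert(w_1,\dots,w_n,\ell_s,\dots,\ell_1)$. The plan is to establish three properties: (i) the entries $-1,\dots,-s$ occupy the vertical strip $\mu^{(s)}$ sitting at the top of column~$1$ of $P$; (ii) the outer shape of $P$ equals $\overline{\lambda}$; (iii) the skew tableau obtained by erasing the boxes of $-1,\dots,-s$ rectifies, via jeu de taquin, to $\widehat{T}_1$. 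Given~(i)--(iii), the uniqueness statement in Definition~\ref{D6.1} forces the erased skew tableau to coincide with $S$, which is the claim.

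For~(i) I argue by induction on $t$. Each $-t$, when inserted, is strictly smaller than every entry currently in the tableau (the previously inserted $-1,\dots,-(t-1)$ and any positive $i_u$'s inserted so far), so its insertion triggers a bumping cascade that goes straight down column~$1$ and places $-t$ at position $(1,1)$, pushing $-(t-1)$ to $(2,1)$, and so on. A positive insertion performed between two consecutive negative insertions cannot bump a negative entry, since every positive value exceeds $-1>\cdots>-s$. Hence after all insertions the negative entries occupy exactly the boxes $(1,1),\dots,(s,1)$, with $-s$ at the corner, forming the vertical strip $\mu^{(s)}$. Property~(iii) follows from the classical principle that deleting the smallest letters from a Schensted-inserted tableau and rectifying yields the insertion tableau of the resulting subword: in our setting this positive subword is $i_1,\dots,i_r,\ell_s,\dots,\ell_1$, and its $\rinsert$ equals $T_1\leftarrow\ell_s\leftarrow\cdots\leftarrow\ell_1=\widehat{T}_1$ by the calculation in the proof of Theorem~\ref{T2}.

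The main obstacle is~(ii), identifying the outer shape of $P$ with $\overline{\lambda}$. The plan is to pass to the inverse biword using the identity $\RSl(w^{-1})=\RSr(w)$, so that the shape of $P$ equals the shape of the insertion tableau of the inverse biword sorted by its new top row. That sorted bottom word reads $m_s,\dots,m_1,b_1,\dots,b_n$, where $b_k:=j_u$ if $k=i_u$ and $b_k:=n+s+1-v$ if $k=\ell_v$. The first $s$ (strictly decreasing) insertions produce a single column with entries $m_1,\dots,m_s$ from top to bottom; let $C$ denote this column. Applying the same duality to the bijection whose $\RSl$ is $\widehat{T}_1$ identifies $\rinsert(b_1,\dots,b_n)$ with $\widehat{T}_2$ (the relabeling by $u\leftrightarrow j_u$ and $r+t\leftrightarrow n+t$ is monotonic and matches the placement of the $n+1,\dots,n+s$ in Definition~\ref{D6.1}). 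It then remains to check the shape identity
\[
\mathrm{shape}\big(C\leftarrow b_1\leftarrow\cdots\leftarrow b_n\big)\;=\;\mathrm{shape}\big(m_s\to\cdots\to m_1\to\widehat{T}_2\big)\;=\;\overline{\lambda}.
\]
This commutativity of an initial row-insertion of a column with a subsequent sequence of row-insertions I would deduce either from Fomin's growth model (the two procedures produce the same sequence of shapes along the corresponding diagonal of the growth diagram attached to the inverse biword), or by a direct Greene-theorem comparison of the maximal $k$-tuples of disjoint weakly increasing subsequences on both sides. Combined with~(i) and~(iii), this completes the proof.
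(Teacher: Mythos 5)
Your proposal is correct, and it fills in the details that the paper itself delegates to a single citation (``This follows from [Fulton, Proposition 1 in \S 5.1]; see also Lemma \ref{L7} below.''). Your decomposition into (i) location of the small entries, (ii) the outer shape, and (iii) rectification of the residual skew tableau is sound, and each step checks out: (i) follows from the bumping analysis exactly as you describe; (iii) is the standard fact that restricting to the large letters preserves Knuth equivalence (and hence jeu de taquin class), which applies because by (i) the deleted boxes form the straight shape $\mu^{(s)}$.

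The one place where you reach for heavier tools than needed is (ii). You propose closing it with Fomin's growth model or Greene's theorem. In fact the identity you need is exactly the row-insertion/column-insertion duality $\rinsert(a_1,\dots,a_\ell)=\cinsert(a_\ell,\dots,a_1)$ that the paper records in Section \ref{section-2.2.2}, and it gives \emph{equality of tableaux}, not merely of shapes:
\[
\rinsert(m_s,\dots,m_1,b_1,\dots,b_n)
= \cinsert(b_n,\dots,b_1,m_1,\dots,m_s)
= m_s\to\cdots\to m_1\to\cinsert(b_n,\dots,b_1)
= \overline{T}_2,
\]
since $\cinsert(b_n,\dots,b_1)=\rinsert(b_1,\dots,b_n)=\widehat{T}_2$. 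This is precisely the computation the paper carries out in the forward-referenced Lemma \ref{L7} (there phrased for $\RSr(\widehat{\tau})$; note $\RS_1$ and $\RS_2$ have the same shape). So your argument and the paper's intended argument coincide once you substitute this direct identity for the growth-diagram/Greene detour; either way the conclusion is the same.
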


\begin{proof}
This follows from \cite[Proposition 1 in \S 5.1]{Fulton};
see also Lemma \ref{L7} below.
\end{proof}

\begin{example}
For instance, for $T_1=\tableau{13,46,5}$, $T_2=\tableau{24,36,7}$,
$(\ell_1,\ell_2)=(2,7)$, $(m_1,m_2)=(1,5)$, $n=7$, we~get
\[\widehat{T}_1=\tableau{127,36,4,5},\ \widehat{T}_2=\tableau{248,36,7,9},\ \overline{T}_2=\tableau{1248,36,57,9}\
\mbox{hence}\quad \tableau{1,5}*T_2\bigtriangleup
T_1*\tableau{2,7}=\tableau{\none 127,\none 3,46,5}.
\]
\end{example}

\begin{theorem}\label{T5}
Let $\omega\in(\mathfrak{T}_n^2)'$ be of the form
$\omega=\left(\begin{array}{c} \tau \\ 1_n \end{array}\right)$ for a
partial permutation $\tau\in\mathfrak{T}_n$. We write
\[ \tau =\left(\begin{array}{cccccc}
j_1 & \cdots & j_r & m_1 & \cdots & m_{s} \\ i_1 & \cdots & i_r & 0 & \cdots & 0
\end{array}\right)\quad\mbox{with}\ m_1<\cdots<m_{s}, \]
where $r=\mathrm{rank}\,\tau$ and  $s=n-r$.  
Let $\{\ell_1<\cdots<\ell_{s}\}:=\{1,\ldots,n\}\setminus\{i_1,\ldots,i_r\}$.
Let us denote the nondegenerate part of $\tau$ by 
\[
\sigma:=\left(\begin{array}{ccc}
j_1 & \cdots & j_r \\
i_1 & \cdots & i_r
\end{array}\right).
\]
Then the image $\Xis(\omega) \in \signpartitionsof{2n} $ of the exotic moment map is 
characterized as follows. 
\begin{thmenumerate}
\item
For $ k > 0 $ even, 
the number of $+$'s (resp. $-$'s) contained in the first $k$ columns of $\Xis(\omega)$ coincides with the number of boxes in the first $k$ columns of the tableau
\[\RSl(\sigma)*\tableaul{\ell_1 \\ { }^{\vdots} \\ \ell_{s}}\qquad\big(\mbox{resp.}\quad \tableaul{m_1 \\ { }^{\vdots} \\ m_{s}}*\RSr(\sigma)\big).\]
\item
For $ k > 0 $ odd, the number of $+$'s contained in the first $k$ columns of $\Xis(\omega)$ coincides with the number of boxes contained in the first $k$ columns of the skew tableau 
(see Definition \ref{D6.1} for notation)
\[\tableaul{m_1 \\ { }^{\vdots} \\ m_{s}}*\RSr(\sigma)\bigtriangleup
\RSl(\sigma)*\tableaul{\ell_1 \\ { }^{\vdots} \\ \ell_{s}}.
\]
\item
For $ k > 0 $ odd, the number of $-$'s in the first $k$ columns of $\Xis(\omega)$ is equal to 
\[s+(\mbox{\rm number of boxes in the first $k$ columns of $\RS_i(\sigma)$})\quad(i\in\{1,2\}).\]
\end{thmenumerate}
\end{theorem}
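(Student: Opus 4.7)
The strategy rests on Proposition~\ref{P2.2}(b), which reduces the computation of $\Xis(\omega)$ to computing $\dim(\ker z^k\cap V^{\pm})$ for all $k\geq 1$ and for $z$ varying in a dense open subset of $\pi_{\mathfrak{s}}(T^*_{\orbitofX_\omega}\mathfrak{X})$. Following the calculation in the proof of Theorem~\ref{T4}, a generic such $z$ has the form
\[
z=\left(\begin{array}{cc} 0 & -\tau y\tau \\ y & 0 \end{array}\right),\qquad y\in W(\tau):=\{y\in\Mat_n:\tau y,\, y\tau\in\lie{n}\}\ \text{generic,}
\]
and by Theorem~\ref{T4}, the matrices $A:=\tau y$ and $B:=y\tau$ are then generic nilpotents of Jordan types $\lambda=\Phi_1(\tau)$ and $\mu=\Phi_2(\tau)$, respectively. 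Since $z$ interchanges $V^+$ and $V^-$, a direct block computation using $(-\tau y\tau)y=-A^2$ on $V^+$ and $y(-\tau y\tau)=-B^2$ on $V^-$ yields
\[
\ker z^{2m}\cap V^+=\ker A^{2m},\qquad \ker z^{2m}\cap V^-=\ker B^{2m},
\]
\[
\ker z^{2m+1}\cap V^+=y^{-1}(\ker B^{2m}),\qquad \ker z^{2m+1}\cap V^-=\tau^{-1}(\ker A^{2m+1}).
\]
Since the dimension of $\ker N^k$ for a nilpotent $N$ of Jordan type $\pi$ equals the number of boxes in the first $k$ columns of $\pi$, assertion (i) follows at once.

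For (iii) we exploit that $\Im A\subset\Im\tau$ makes $\Im\tau$ an $A$-invariant subspace, whence $\Im\tau\cap\ker A^{k}=\ker(A|_{\Im\tau})^{k}$. The exact sequence $0\to\ker\tau\to\tau^{-1}(\ker A^{2m+1})\to\Im\tau\cap\ker A^{2m+1}\to 0$ then gives $\dim(\ker z^{2m+1}\cap V^-)=s+\dim\ker(A|_{\Im\tau})^{2m+1}$. Expressing $A|_{\Im\tau}$ in the basis $(e_{i'_1},\ldots,e_{i'_r})$ with $i'_1<\cdots<i'_r$ the elements of $I(\tau)$, one checks that its $(\alpha,\beta)$-entry equals $y_{\sigma^{-1}(i'_\alpha),\,i'_\beta}$, which varies independently for generic $y\in W(\tau)$ precisely on the positions $\{(\alpha,\beta):\alpha<\beta,\ \sigma^{-1}(i'_\alpha)<\sigma^{-1}(i'_\beta)\}$; that is, $A|_{\Im\tau}$ lies generically in $\lie{n}_r\cap\mathrm{Ad}(w)\lie{n}_r$, where $w\in\mathfrak{S}_r$ is the standardization of $\sigma$. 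The classical Steinberg Theorem~\ref{T-RS} then identifies the generic Jordan type as $\mathrm{shape}(\RSl(w))=\mathrm{shape}(\RSl(\sigma))$, the last equality because the Robinson-Schensted shape is invariant under order-preserving relabeling of entries.

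Assertion (ii) is the delicate part. Here $\dim(\ker z^{2m+1}\cap V^+)=\dim\ker y+\dim(\Im y\cap\ker B^{2m})$, and the goal is to match this with the number of boxes in the first $2m+1$ columns of the skew tableau $S$ of Definition~\ref{D6.1}. The plan is to invoke the Lemma following Definition~\ref{D6.1}, which realizes $S$ as the sub-tableau of $\RSl(w')$ obtained by erasing the $s$ boxes with entries $-1,\ldots,-s$, for the augmented bijection $w'$ displayed there. We then aim to show that, upon extending $V^+$ by an auxiliary $\C^s$ accounting for the negative entries of $w'$, the generic $z$ gives rise to a Steinberg-type generic element attached to $w'$ in $\lie{n}_{n+s}\cap\mathrm{Ad}(w')\lie{n}_{n+s}$; Steinberg's theorem on $\GL_{n+s}$ then identifies $\dim\ker y+\dim(\Im y\cap\ker B^{2m})$ with the number of boxes in the first $2m+1$ columns of $\mathrm{shape}(\RSl(w'))$, and subtracting the $s$ boxes corresponding to the erased entries yields the desired count in $S$. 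The main obstacle lies in this last step: while (i) and (iii) reduce cleanly to Steinberg's theorem on separate factors, (ii) intrinsically couples the row-insertions of the $\ell$'s and the column-insertions of the $m$'s through the off-diagonal map $y$, and carefully pinning down the augmentation $V^+\oplus V^-\hookrightarrow V^+\oplus V^-\oplus\C^s$ together with the jeu-de-taquin identity underlying Definition~\ref{D6.1} constitutes the heart of the argument.
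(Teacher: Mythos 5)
Your computation of $\ker z^k\cap V^{\pm}$ is correct (it reproduces the paper's Lemma~\ref{L6}), and your proof of part~(1) is complete: it combines that lemma, Proposition~\ref{P2.2}(b), and the genericity of the Jordan types of $\tau y$ and $y\tau$ coming from Theorem~\ref{T2}, exactly as the paper does. Your proof of part~(3) is also correct but follows a mildly different route from the paper. The paper introduces linear maps $\xi:\Mat_r\to\Mat_n$ and $\phi:\Mat_n\to\Mat_r$ supported on $J\times I$ and establishes the identity $\tau(y\tau)^k y\tau=\xi(\tau'(y'\tau')^k y'\tau')$ by induction (Lemma~\ref{L9}), so the $s$-shift in dimension comes from the kernel of $\xi$; you instead work intrinsically on the $A$-invariant subspace $\Im\tau$, identify the constrained entries of $A|_{\Im\tau}$ as spanning $\lie{n}_r\cap\mathrm{Ad}(w)\lie{n}_r$ for $w$ the standardization of $\sigma$, and invoke Steinberg's theorem directly. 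Both arguments are sound; yours is perhaps a bit more conceptual, and the verification that a generic $y\in W(\tau)$ produces a generic element of $\lie{n}_r\cap\mathrm{Ad}(w)\lie{n}_r$ is exactly the observation that the entries $y_{\sigma^{-1}(i'_\alpha),\,i'_\beta}$ occurring in $A|_{\Im\tau}$ are independent coordinates on $W(\tau)$.

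Part~(2), however, is not proved: you lay out the right plan (augment to $\GL_{n+s}$, attach the full permutation $\widehat\tau$, use the lemma after Definition~\ref{D6.1} to realize $S$ inside $\RSl(\widehat\tau)$, count boxes and subtract $s$), but you explicitly stop at what you call ``the heart of the argument.'' Concretely, two things are missing, and they are precisely the content of the paper's Lemma~\ref{L8}: (a) the map $\psi:y\mapsto\bigl(\begin{smallmatrix}0&y\\0&0\end{smallmatrix}\bigr)$ restricts to a \emph{bijection} between $\{y:(\tau y,\,y\tau)\in\lie{n}\times\lie{n}\}$ and $\{z:(\widehat\tau z,\,z\widehat\tau)\in\widehat{\lie{n}}\times\widehat{\lie{n}}\}$ so that a generic $y$ corresponds to a generic $\widehat y=\psi(y)$ (you need surjectivity here, which requires the nontrivial verification that any $z$ satisfying the $\widehat\tau$-constraints actually has the block form $\psi(y)$); and (b) the intertwining identity $\psi\bigl((y\tau)^k y\bigr)=(\psi(y)\widehat\tau)^k\psi(y)$, which converts $\dim\ker\bigl((y\tau)^{2m}y\bigr)$ into $\dim\ker\bigl((\widehat y\widehat\tau)^{2m}\widehat y\bigr)-s$. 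Without (a) you cannot transport genericity, and without (b) the box-count comparison with $\RSl(\widehat\tau)$ is merely heuristic. I'd encourage you to prove both; an induction on $k$ using the block form of $\widehat\tau$ and $\widehat y\widehat\tau$ does (b) in a few lines, while (a) follows from directly reading off the vanishing of the ``extra'' blocks of $z$ from the upper-triangularity of $\widehat\tau z$ and $z\widehat\tau$.
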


\begin{example}
\label{E6.1} {\rm (a)} If $\tau=\sigma$ is a permutation, then $\RSl(\tau),\RSr(\tau)$ are standard Young tableaux of
the same shape, and the signed Young diagram
$\Xis(\omega)$ is obtained by duplicating this common shape and
filling in the rows and columns with alternated $+$'s and $-$'s. For
instance,
\[\tau=1_n\ \implies\ \mathrm{shape}(\RS_i(\tau))=(n)=\tableau{\ \ \ \ \cdots}\ \implies\ \Xis(\omega)=\tableau{+-+-\cdots,-+-+\cdots}\,.\]
See Section \ref{section-6.2} for more details.

\smallskip
{\rm (b)} Assume that
$\tau=\left(\begin{array}{cccccccccccccccc}
1 & 2 & 3 & \cdots & n \\
0 & 1 & 2 & \cdots & n-1
\end{array}\right)$.
In this case we get
\begin{align*}
& 
(\RSl(\sigma),\RSr(\sigma))=(\tableaul{1 & 2 & \cdots & n\mbox{--}1},\tableaul{2 & 3 & \cdots & n}), 
\\[1ex]
& 
\RSl(\sigma)*\tableau{n}=\tableau{1}*\RSr(\sigma)=\tableau{12\cdots n}\,, \quad \text{ and } 
\\[1ex]
&
\tableau{1}*\RSr(\sigma)\bigtriangleup\RSl(\sigma)*\tableau{n}=\tableaul{\none[\cdot] & 1 & 2 & \cdots & n}
\end{align*}
(the latter tableau is a skew tableau whose first column contains no
box), hence
\[\begin{array}[t]{c}
\Xis(\omega)=\tableau{-+-\cdots+,-+-\cdots+}\ \mbox{ if
$n$ is even} \\[4mm] \mbox{(two rows of length $n$)} \end{array}
\quad\mbox{or}\quad
\begin{array}[t]{c}
\Xis(\omega)=\tableau{-+\cdots+-+,-+\cdots+}\ \mbox{ if
$n$ is odd.} \\[4mm] \mbox{(rows of lengths $n+1,n-1$)} \end{array}\]

{\rm (c)} For $n=3$, we have computed
the signed Young diagram $\Xis(\omega)$
for each matrix of the form $\omega=\left(\begin{array}{c} \tau \\
1_3
\end{array}\right)$ with $\tau\in\mathfrak{T}_3$. These signed Young
diagrams are listed below in Figures~\ref{F1}--\ref{F3} at the end of this article.
\end{example}

The rest of this section is devoted to the proof of Theorem \ref{T5}.  
Let us begin with some preliminary lemmas.

\subsection{A characterization of the image of the exotic moment map $\Xis$}
As in Section \ref{section-9}, we prepare a lemma, which characterizes the $K$-orbit corresponding to the signed Young diagram $\Xis(\omega) \in \signpartitionsof{2n} $.
Recall that $ \lie{n}\subset\lie{gl}_n $ stands for the subalgebra of strictly upper triangular matrices.

\begin{lemma}\label{L4}
\begin{thmenumeralph}
\item
For $\omega\in(\mathfrak{T}_n^2)'$, put $ \Lambda = \Xis(\omega) \in \signpartitionsof{2n} $.  
Then the nilpotent $K$-orbit 
$ \frakorbit_{\Lambda} \subset \mathcal{N}_\mathfrak{s}$ is characterized as the $K$-orbit which intersects
\[\conormbIms{\omega} := \{x^{-\theta}:x\in\mathfrak{u}_\omega\mbox{ such that }x^\theta\in\lie{n}\times\lie{n}\} \]
along a dense open subset.
\item
Moreover, if $\omega$ is of the form $\omega=\left(\begin{array}{c} \tau \\ 1_n \end{array}\right)$ with $\tau\in\mathfrak{T}_n$, then we have
\[\conormbIms{\omega}=\Big\{\left(\begin{array}{cc}
0 & -\tau y \tau \\y & 0
\end{array}\right):y\in\MatnC\ \mbox{such that}\ (\tau y,y\tau)\in\lie{n}\times\lie{n}\Big\}.\]
\end{thmenumeralph}
\end{lemma}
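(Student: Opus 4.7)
The proof breaks naturally into the two parts.

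For part (a), the argument parallels Lemma \ref{L3}. By the description of $T^*_{\mathfrak{Z}_\omega}\mathfrak{X}$ in \eqref{11}, we have
\[
T^*_{\mathfrak{Z}_\omega}\mathfrak{X} = K\cdot\{(\mathfrak{b}\times\mathfrak{b},\mathfrak{p}_\omega,x) : x\in\mathfrak{u}_\omega,\ x^\theta\in\mathfrak{n}\times\mathfrak{n}\}.
\]
Applying $\pi_\mathfrak{s}$ and using that $\pi_\mathfrak{s}$ is $K$-equivariant and that $K$ preserves the Cartan decomposition (so $(k\cdot x)^{-\theta}=k\cdot x^{-\theta}$), the plan is to identify $\pi_\mathfrak{s}(T^*_{\mathfrak{Z}_\omega}\mathfrak{X})$ with $K\cdot\conormbIms{\omega}$. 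Its closure is then an irreducible, $K$-stable, closed subset of $\mathcal{N}_\mathfrak{s}$, hence equals $\overline{\mathfrak{O}_\Lambda}$ for a unique signed Young diagram $\Lambda$, which by definition is $\Xis(\omega)$. Since $\mathcal{N}_\mathfrak{s}$ has only finitely many $K$-orbits (parametrized by $\signpartitionsof{2n}$), the orbit $\mathfrak{O}_\Lambda$ is characterized as the one intersecting $\conormbIms{\omega}$ densely.

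For part (b), I will reuse the computation carried out in the proof of Theorem \ref{T4}. There it was shown that for $\omega=\left(\begin{smallmatrix}\tau\\ 1_n\end{smallmatrix}\right)$, an element $x=\left(\begin{smallmatrix}x_1 & x_2\\ x_3 & x_4\end{smallmatrix}\right)$ lies in $\mathfrak{u}_\omega$ if and only if
\[
x=\begin{pmatrix}\tau x_3 & -\tau x_3\tau\\ x_3 & -x_3\tau\end{pmatrix}.
\]
Setting $y:=x_3$, the condition $x^\theta\in\mathfrak{n}\times\mathfrak{n}$ then translates exactly to $\tau y\in\mathfrak{n}$ and $y\tau\in\mathfrak{n}$, while
\[
x^{-\theta}=\begin{pmatrix}0 & -\tau y\tau\\ y & 0\end{pmatrix}.
\]
This gives the stated description of $\conormbIms{\omega}$ directly.

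Neither part presents a real obstacle: part (a) is a structural consequence of the definitions together with the finiteness of $K$-orbits on $\mathcal{N}_\mathfrak{s}$, and part (b) is an elementary matrix computation already prepared by \eqref{12}. The only point requiring mild care is part (a), where one should verify that passing to the $K$-saturation commutes with taking $(\cdot)^{-\theta}$ so that $\pi_\mathfrak{s}(T^*_{\mathfrak{Z}_\omega}\mathfrak{X})=K\cdot\conormbIms{\omega}$; this is immediate from $K$-equivariance of $\pi_\mathfrak{s}$.
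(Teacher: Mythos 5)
Your proposal is correct and follows essentially the same route as the paper: part (a) is deduced from the description of the conormal bundle in equation (\ref{11}) together with $K$-equivariance of $\pi_\mathfrak{s}$ and the definition of $\Xi_\mathfrak{s}$, and part (b) is read off from the matrix computation (\ref{12}) already carried out in the proof of Theorem \ref{T4}.
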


\begin{proof}
By (\ref{11}), we have $T^*_{\orbitofX_\omega}\mathfrak{X}=K\cdot\{(\lie{b}\times\lie{b},\mathfrak{p}_\omega,x):x\in\mathfrak{u}_\omega,\ x^\theta\in\lie{n}\times\lie{n}\}$, hence we get
\begin{equation*}
\closure{\pi_\mathfrak{s}(T_{\orbitofX_\omega}^*\mathfrak{X})} 
= \closure{K\cdot \conormbIms{\omega}}
= \closure{\frakorbit_{\Lambda}}
\end{equation*}
by the definition of the map $\Xis$.  
%%, we get $\overline{K\cdot W_\omega(\mathfrak{n}_1,\mathfrak{n}_2)}
%%=\overline{\mathfrak{O}_{\Xis(\omega)}}$.
%%Since $\mathfrak{O}_{\Xis(\omega)}$ is $K$-stable, 
%%this equality implies that 
%%$W_\omega(\mathfrak{n}_1,\mathfrak{n}_2)\cap\mathfrak{O}_{\Xis(\omega)}$ is 
%%nonempty and open in the space $W_\omega(\mathfrak{n}_1,\mathfrak{n}_2)$.
This shows part {\rm (a)} of the statement. 
Part {\rm (b)} follows from the calculation made in the proof of Theorem \ref{T4} (see (\ref{12})).
\end{proof}

\subsection{The permutation case}
\label{section-6.2} We first determine $\Xis(\omega)$ in
the case where $\omega$ involves a permutation $\tau$.

\begin{notation}
Given a partition $\lambda=(\lambda_1,\ldots,\lambda_s)\in\partitionsof{n}$, we denote by $\Lambda[2\lambda]$
the signed Young diagram of size $2n$ obtained by duplicating $\lambda$, i.e., each row of $\lambda$ of length $\lambda_i$ gives rise to two rows of $\Lambda[2\lambda]$ of length $\lambda_i$, one starting with $+$ and the other starting with $-$.
For instance:
\[\lambda=\mbox{\tiny $\ydiagram{4,2,2,1}$}\quad\implies\quad\Lambda[2\lambda]=\tableau{+-+-,-+-+,+-,+-,-+,-+,+,-}.\]
In other words, with the notation of Section \ref{section-2.4.2}, the signed Young diagram $\Lambda[2\lambda]$ is characterized as follows:
\[ 
\numberofsigns{(\Lambda[2\lambda])}{k}{+} = \numberofsigns{(\Lambda[2\lambda])}{k}{-} = \numberofboxes{\lambda}{k}, 
\qquad
\forall k\geq 1 .
\]
%\begin{eqnarray*}
%(\mbox{number of $+$'s (resp. $-$'s) in the first $k$ columns of $\Lambda[2\lambda]$}) \\ 
%=(\mbox{number of boxes in the first $k$ columns of $\lambda$})\quad\forall k\geq 0.
%\end{eqnarray*}
%
\end{notation}

\begin{lemma}\label{L5}
Assume that $\omega=\left(\begin{array}{c}\tau \\ 1_n\end{array}\right)$
where
$\tau$ is a permutation.
Put 
$ \lambda = \shape{\RS_i(\tau)} $ (for $i\in\{1,2\}$).
Then we have $ \Xis(\omega)=\Lambda[2\lambda] $.
\end{lemma}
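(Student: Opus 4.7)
The strategy is to use the characterization of nilpotent $K$-orbits in $\mathcal{N}_\mathfrak{s}$ given by Proposition \ref{P2.2}\,(b), namely that a signed Young diagram $\Lambda$ corresponding to the orbit $\frakorbit_\Lambda \ni x$ is determined by the numbers $\numberofsigns{\Lambda}{k}{\pm} = \dim(\ker x^k) \cap V^\pm$ for all $k \geq 1$. Combined with Lemma \ref{L4}, it suffices to compute these dimensions for a generic element $x \in \conormbIms{\omega}$ and compare with the diagram $\Lambda[2\lambda]$, which satisfies $\numberofsigns{\Lambda[2\lambda]}{k}{+} = \numberofsigns{\Lambda[2\lambda]}{k}{-} = \numberofboxes{\lambda}{k}$.

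By Lemma \ref{L4}\,(b), a generic $x \in \conormbIms{\omega}$ has the block form $x = \left(\begin{smallmatrix} 0 & -\tau y \tau \\ y & 0 \end{smallmatrix}\right)$ with $(\tau y, y\tau) \in \lie{n} \times \lie{n}$. Since $\tau$ is now a full permutation (hence invertible), I will perform the change of variable $z := \tau y$. Then $y = \tau^{-1} z$, and the condition $y\tau \in \lie{n}$ translates to $\tau^{-1} z \tau \in \lie{n}$, i.e., $z \in \lie{n} \cap \wconjn{\tau}{n}$. A direct block matrix computation gives
\[
x^2 = \begin{pmatrix} -(\tau y)^2 & 0 \\ 0 & -(y\tau)^2 \end{pmatrix} = \begin{pmatrix} -z^2 & 0 \\ 0 & -(\tau^{-1} z \tau)^2 \end{pmatrix},
\]
and by induction one obtains that $\ker x^{2k} \cap V^+ = \ker z^{2k}$ and $\ker x^{2k} \cap V^- = \tau^{-1}(\ker z^{2k})$. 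A similar computation for $x^{2k+1}$, using $d(cd)^k = (-1)^k \tau^{-1} z^{2k+1}$ and $(cd)^k c = -(-1)^k z^{2k+1} \tau$ with $c = -\tau y\tau$, $d = y$, shows that $\ker x^{2k+1} \cap V^\pm$ also has the same dimension as $\ker z^{2k+1}$. Thus in all cases
\[
\dim(\ker x^k) \cap V^+ = \dim(\ker x^k) \cap V^- = \dim \ker z^k.
\]

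Now by the classical Steinberg theory (see Section \ref{section-Steinberg}), the generic element $z$ of $\lie{n} \cap \wconjn{\tau}{n}$ belongs to the nilpotent $\GLnC$-orbit $\calorbit_\lambda$ with $\lambda = \mathrm{St}(\tau)$; by Theorem \ref{T-RS}, this partition is precisely $\lambda = \shape{\RS_i(\tau)}$ for $i \in \{1,2\}$. For a nilpotent matrix of Jordan type $\lambda$ we have the standard formula $\dim \ker z^k = \sum_i \min(\lambda_i, k) = \numberofboxes{\lambda}{k}$. Combining everything yields $\numberofsigns{\Xis(\omega)}{k}{\pm} = \numberofboxes{\lambda}{k}$, which uniquely identifies $\Xis(\omega) = \Lambda[2\lambda]$.

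The only mildly delicate step is verifying that the change of variable $y \mapsto z = \tau y$ indeed sends a \emph{generic} $y$ in the constraint set $\{y : (\tau y, y\tau) \in \lie{n}^2\}$ to a generic $z$ in $\lie{n} \cap \wconjn{\tau}{n}$ (so that Steinberg theory applies to produce the Jordan type $\lambda$); but this is immediate since the map is a linear isomorphism between these two spaces. No combinatorial input beyond Theorem \ref{T-RS} is needed in this special case: the tableau-theoretic machinery of Section \ref{section-10} reduces, for a permutation $\tau$, to the mere shape of the Robinson--Schensted image, and the signed diagram is just a doubling of this shape.
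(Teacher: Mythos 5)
Your proof is correct and follows essentially the same route as the paper's. The paper conjugates the generic element by the $K$-element $\left(\begin{smallmatrix}\tau^{-1}&0\\0&1_n\end{smallmatrix}\right)$ to bring it to the anti-diagonal form $\left(\begin{smallmatrix}0&-a\\a&0\end{smallmatrix}\right)$ with $a=y\tau$, then reads off $\numberofsigns{\Lambda}{k}{\pm}=\dim\ker a^k$; you instead keep $x$ as is, substitute $z=\tau y\in\lie n\cap\wconjn{\tau}{n}$, and track the same kernel dimensions block by block. These are the same computation in different coordinates ($a=y\tau$ and $z=\tau y$ are conjugate), and both invoke the Steinberg/Robinson--Schensted identification of the generic Jordan type (the paper via its Theorem~\ref{T2}, you via Theorem~\ref{T-RS} applied to $\lie n\cap\wconjn{\tau}{n}$, which for a permutation is the same statement). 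Nothing is missing; your explicit check that $y\mapsto\tau y$ is a linear isomorphism between the two constraint spaces is the point the paper leaves implicit.
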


\begin{proof}
By Lemma \ref{L4},
we have to determine the signed Young diagram $\Lambda$ parametrizing the $K$-orbit of $\mathcal{N}_\mathfrak{s}$ containing 
$\left(\begin{array}{cc} 0 & -\tau y\tau \\ y & 0 \end{array}\right)$ whenever $y\in\MatnC$ is a generic element of the space
\[\{y\in\MatnC:(\tau y,y\tau)\in\lie{n} \times \lie{n}\}.\]
For such an element $y$, we know from Theorem \ref{T2} that the Jordan form of $a:=y\tau$ is given by 
$ \lambda $.  
Using that the matrix $\tau$ is a permutation (thus invertible), we can write
\[\left(\begin{array}{cc}
\tau^{-1} & 0 \\0 & 1_n
\end{array}\right)\left(\begin{array}{cc}
0 & -\tau y \tau \\y & 0
\end{array}\right)\left(\begin{array}{cc}
\tau & 0 \\0 & 1_n
\end{array}\right)=\left(\begin{array}{cc}
0 & -y\tau \\ y\tau & 0
\end{array}\right)=\left(\begin{array}{cc}
0 & -a \\a & 0
\end{array}\right)=:x\]
hence $x$ also belongs to the $K$-orbit $\mathfrak{O}_\Lambda$.
Note also that, for all $k\geq 0$, we have
\[x^{2k}=(-1)^k\left(\begin{array}{cc}
a^{2k} & 0 \\0 & a^{2k}
\end{array}\right)\quad\mbox{and}\quad
x^{2k+1}=(-1)^k\left(\begin{array}{cc}
0 & -a^{2k+1} \\ a^{2k+1} & 0
\end{array}\right).\]
Hence for all $k\geq 1$,
\[ 
\numberofsigns{\Lambda}{k}{\pm}
=\dim \ker x^k\cap V^\pm 
=\dim \ker a^k=
\numberofboxes{\lambda}{k}
\]
and therefore $\Lambda=\Lambda[2\lambda]$ as asserted.
\end{proof}

\subsection{A lemma for a partial permutation}
Let $\omega=\left(\begin{array}{c}\tau \\ 1_n\end{array}\right)$
where $\tau$ is a partial permutation. We consider an element
$y\in\MatnC$ which is generic in the space
\[\{y\in\MatnC:(\tau y,y\tau)\in\lie{n}\times\lie{n}\}\]
so that the signed Young diagram $\Lambda:=\Xis(\omega)$ parametrizes the $K$-orbit $\mathfrak{O}_\Lambda\subset\mathcal{N}_\mathfrak{s}$ which contains the element
\[x:=\left(\begin{array}{cc}0 & -\tau y \tau \\ y & 0\end{array}\right)\]
(see Lemma \ref{L4}). The proof of the following lemma is straightforward
(the second part of each claim {\rm (a)} and {\rm (b)} follows from the definition of the orbit $\mathfrak{O}_\Lambda$; see Definition \ref{defOLambda}\,{\rm (c)}).

\begin{lemma}
\label{L6}
\begin{thmenumeralph}
\item
$x^{2k}=(-1)^k\left(\begin{array}{cc} (\tau y)^{2k} & 0 \\ 0 & (y\tau)^{2k} \end{array}\right)$ for all $k\geq 0$, hence
\[
\numberofsigns{\Lambda}{2k}{+} = \dim\ker (\tau y)^{2k}
\quad\mbox{and}\quad
\numberofsigns{\Lambda}{2k}{-} = \dim\ker (y \tau)^{2k}.
\]
\item
Similarly 
$x^{2k+1}=(-1)^k\left(\begin{array}{cc}0 & -(\tau y)^{2k+1} \tau \\ (y\tau)^{2k}y & 0\end{array}\right)$
for all $k\geq 0$, hence 
\[
\numberofsigns{\Lambda}{2k+1}{+} = \dim\ker \bigl((y\tau )^{2k}y\bigr)
\quad\mbox{and}\quad
\numberofsigns{\Lambda}{2k+1}{-} = \dim\ker \bigl((\tau y)^{2k+1}\tau\bigr).
\]
\end{thmenumeralph}
\end{lemma}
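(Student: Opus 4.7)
The plan is to reduce both parts to a single block-matrix squaring computation, and then to invoke Proposition \ref{P2.2}(b) to translate the resulting block descriptions of $x^k$ into the asserted kernel dimensions.

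First I would compute $x^2$ directly. Writing $x = \left(\begin{smallmatrix} 0 & -\tau y \tau \\ y & 0 \end{smallmatrix}\right)$ and carrying out the block multiplication, one obtains
\[
x^2 = \begin{pmatrix} -\tau y \tau y & 0 \\ 0 & -y\tau y \tau \end{pmatrix} = -\begin{pmatrix} (\tau y)^2 & 0 \\ 0 & (y\tau)^2 \end{pmatrix}.
\]
The only nontrivial fact used here is the associativity identities $\tau y\tau y = (\tau y)^2$ and $y\tau y\tau = (y\tau)^2$; in particular no special property of $\tau$ (such as $\tau^2 = 1$) is invoked. Iterating gives $x^{2k} = (x^2)^k = (-1)^k \operatorname{diag}((\tau y)^{2k},\,(y\tau)^{2k})$, and multiplying once more by $x$ yields
\[
x^{2k+1} = (-1)^k \begin{pmatrix} 0 & -(\tau y)^{2k+1}\tau \\ (y\tau)^{2k} y & 0 \end{pmatrix},
\]
as a straightforward induction on $k$. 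This establishes the matrix-power identities in (a) and (b).

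Next, since $x \in \mathfrak{O}_{\Lambda}$ by the construction preceding the lemma, Proposition \ref{P2.2}(b) gives
\[
\numberofsigns{\Lambda}{k}{\pm} = \dim(\ker x^k) \cap V^{\pm}
\]
for every $k \geq 1$. Recalling $V^+ = \C^n \oplus \{0\}$ and $V^- = \{0\} \oplus \C^n$, I would read off $\ker x^k \cap V^{\pm}$ from the block forms above. For even $k=2k$, the matrix $x^{2k}$ is block-diagonal, so $(v,0) \in \ker x^{2k} \cap V^+$ iff $(\tau y)^{2k} v = 0$, which gives the first formula of (a); the $V^-$ case is the symmetric statement for $(y\tau)^{2k}$. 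For odd $k = 2k+1$, the matrix $x^{2k+1}$ is block-antidiagonal: a vector $(v,0) \in V^+$ lies in $\ker x^{2k+1}$ iff the lower-left block $(y\tau)^{2k} y$ annihilates $v$, and a vector $(0,w) \in V^-$ lies in the kernel iff the upper-right block $(\tau y)^{2k+1}\tau$ annihilates $w$. This yields the two identities of (b).

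There is essentially no obstacle in this proof: it is a purely mechanical consequence of the block structure of $x$ together with the orbit characterization of Proposition \ref{P2.2}(b). The only conceptual point worth emphasizing is that the $\mathbb{Z}/2$-grading arising from the Cartan decomposition $\lie{g} = \lie{k} \oplus \lie{s}$ forces the powers of an element $x \in \lie{s}$ to alternate between $\lie{k}$-valued block-diagonal matrices (even powers, preserving $V^{\pm}$) and $\lie{s}$-valued block-antidiagonal matrices (odd powers, swapping $V^+$ and $V^-$), which is exactly what makes the four kernel-dimension formulas split so cleanly into the stated form.
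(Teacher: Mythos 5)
Your proof is correct and is exactly the argument the paper intends: the paper declares the block-power identities ``straightforward'' and points to Definition \ref{defOLambda}\,(c) (equivalently Proposition \ref{P2.2}\,(b), which you cite) for the translation into the counts $\numberofsigns{\Lambda}{k}{\pm}=\dim(\ker x^k)\cap V^{\pm}$. Your block computation of $x^2$, the induction to $x^{2k}$ and $x^{2k+1}$, and the reading-off of the kernels from the diagonal/antidiagonal block structure are all accurate, so nothing is missing.
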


\subsection{Proof of Theorem \ref{T5}\,{\rm (1)}}

We can take $y\in\{y\in\MatnC:(\tau y,y\tau)\in\lie{n} \times \lie{n}\}$ generic so that
\[\left(\begin{array}{cc}
0 & -\tau y\tau \\ y & 0
\end{array}\right)\mbox{ belongs to the $K$-orbit $\mathfrak{O}_\Lambda\subset\mathcal{N}_\mathfrak{s}$ for $\Lambda:=\Xis(\omega)$}\]
(see Lemma \ref{L4}) and
\[\mbox{$\tau y$ (resp. $y\tau$) belongs to the nilpotent orbit $\mathcal{O}_\lambda$ (resp. $\mathcal{O}_\mu$) for $(\lambda,\mu):=\Phi(\tau)$}\]
(see Lemma \ref{L1}). In view of Lemma \ref{L6}\,{\rm (a)}, it follows that the number of $+$'s (resp. $-$'s) in the first $k$ columns of $\Lambda$, for $k$ even, coincides with the number of boxes in the first $k$ columns of $\lambda$ (resp. $\mu$). 
Thus Theorem \ref{T5}\,{\rm (1)} follows from 
%%ensures, due to the description of the pair $\Phi(\tau)=(\lambda,\mu)$ given in 
Theorem \ref{T2}.

\subsection{Proof of Theorem \ref{T5}\,{\rm (2)}}
We consider the map
\[\psi:\MatnC\longrightarrow \Mat_{n+s},\quad y\longmapsto \left(\begin{array}{cc}
0 & y \\ 0 & 0
\end{array}\right)\]
and the permutation
\[\widehat{\tau}:=\left(
\begin{array}{cccccccccc}
m_1 & \cdots & m_s & j_1 & \cdots & j_r & n+1 & \cdots & n+s \\
s & \cdots & 1 & i'_1 & \cdots & i'_r & \ell'_s & \cdots & \ell'_1
\end{array}
\right)\in\mathfrak{S}_{n+s}\]
where we set $i'_k:=i_k+s$ and $\ell'_k:=\ell_k+s$.

\begin{lemma}\label{L7}
Let $\lambda:=\mathrm{shape}(\RS_i(\widehat{\tau}))$ ($i\in\{1,2\}$) for $\widehat{\tau}$ as above and let $\nu$ be the one-column Young diagram with $s$ boxes. Then the skew tableau
\[ S := \tableaul{m_1 \\ { }^{\vdots} \\ m_{s}}*\RSr(\sigma)\bigtriangleup
\RSl(\sigma)*\tableaul{\ell_1 \\ { }^{\vdots} \\ \ell_{s}}\]
in Theorem \ref{T5}\,{\rm (2)} is of shape $\lambda\setminus\nu$.
\end{lemma}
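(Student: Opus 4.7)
The plan is to establish the stronger identity $\overline{T}_2 = \RSr(\widehat{\tau})$, from which the shape equality $\overline{\lambda}=\mathrm{shape}(\overline{T}_2) = \mathrm{shape}(\RSl(\widehat{\tau})) = \lambda$ follows at once, and hence $S$ is a skew tableau of shape $\lambda\setminus\nu$ as required. By the Robinson--Schensted symmetry recalled in Section~\ref{section-2.2.3}, we have $\RSr(\widehat{\tau}) = \RSl(\widehat{\tau}^{-1})$, so the lemma reduces to computing the insertion tableau of $\widehat{\tau}^{-1}$ and matching it with $\overline{T}_2$.

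First, I read off from the two-line form of $\widehat{\tau}$ the values of $\widehat{\tau}^{-1}$: for $v\in\{1,\ldots,s\}$ one has $\widehat{\tau}^{-1}(v)=m_{s-v+1}$; for $v=s+t$ with $t\in\{1,\ldots,n\}$ one has $\widehat{\tau}^{-1}(v)=j_b$ when $t=i_b$ and $\widehat{\tau}^{-1}(v)=n+s-q+1$ when $t=\ell_q$. Accordingly, I split the computation of $\RSl(\widehat{\tau}^{-1})$ into two phases. In Phase~I, I row-insert the strictly decreasing sequence $m_s>m_{s-1}>\cdots>m_1$; each insertion bumps the previous top entry down one row, so the outcome is the single column $C$ whose entries are $m_1<m_2<\cdots<m_s$ read from top to bottom. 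In Phase~II, I row-insert the sequence $\bigl(\widehat{\tau}^{-1}(s+t)\bigr)_{t=1}^{n}$ into $C$.

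The key combinatorial observation is that the Phase~II sequence is exactly the bottom row (in sorted top-row order) of the inverse biword $B_2^{-1}$, where $B_2$ is the biword of $\widehat{\tau}$ obtained by deleting the columns indexed by $m_1,\ldots,m_s$. Indeed, $B_2$ has top row $(j_1,\ldots,j_r,n+1,\ldots,n+s)$ and bottom row $(i_1+s,\ldots,i_r+s,\ell_s+s,\ldots,\ell_1+s)$, and the construction of $\widehat{T}_1$ and $\widehat{T}_2$ in Definition~\ref{D6.1} is precisely the RSK insertion of $B_2$ (modulo shifting the entries of the insertion tableau by $-s$); the placement of the labels $n+1,\ldots,n+s$ in a column strip read from top to bottom corresponds to Pieri's rule for inserting a decreasing sequence. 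Applying the RSK symmetry to $B_2$, we conclude that Phase~II applied to the \emph{empty} tableau produces exactly $\widehat{T}_2$. Moreover $C$ can equivalently be obtained as $m_s\to\cdots\to m_1\to\emptyset$, because column-inserting the strictly increasing values $m_1<m_2<\cdots<m_s$ into the empty tableau merely appends boxes at the bottom of the first column.

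Finally, I invoke the classical commutativity of row-insertion and column-insertion (see \cite{Fulton}): for any tableau $T$ and any pair of distinct numbers $a,b$ not appearing in $T$, one has $(a\to T)\leftarrow b = a\to(T\leftarrow b)$. Applying this iteratively, with $a$ ranging over $m_1,\ldots,m_s$ and $b$ over the Phase~II values, and using the two descriptions of $C$ above, I obtain
\[
\RSl(\widehat{\tau}^{-1}) \;=\; m_s\to\cdots\to m_1\to\widehat{T}_2 \;=\; \overline{T}_2,
\]
which is the desired identity. The step that I expect to require the most care is the RSK identification in Phase~II, since one must reconcile three shift/reindexing conventions simultaneously --- the $+s$ shift on the bottom row of $B_2$, the decreasing assignment $\widehat{\tau}(m_a)=s-a+1$ on the $m$-block, and the reversal $\ell_{s-c+1}$ attached to the columns indexed by $n+c$ --- and match the result against the sorted-top ordering of $B_2^{-1}$. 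Once this bookkeeping is pinned down, the commutation lemma delivers the tableau identity with no further combinatorics.
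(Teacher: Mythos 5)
Your proof is correct and follows essentially the same route as the paper's: the paper also identifies $(\widehat{T}_1,\widehat{T}_2)$ with the Robinson--Schensted pair $(\RSl(w),\RSr(w))$ of the bijection $w$ obtained by deleting the columns indexed by $m_1,\ldots,m_s$ from $\widehat{\tau}$ (your $B_2$, up to the $+s$ shift), and then reaches $\RSr(\widehat{\tau})=m_s\to\cdots\to m_1\to\widehat{T}_2=\overline{T}_2$ via the symmetry $\RSr(\widehat{\tau})=\RSl(\widehat{\tau}^{-1})$ together with the identity $\rinsert(a_1,\ldots,a_\ell)=\cinsert(a_\ell,\ldots,a_1)$. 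The pointwise commutation $(a\to T)\leftarrow b=a\to(T\leftarrow b)$ you invoke is exactly the combinatorial fact underlying that identity, so the two arguments are the same.
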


\begin{proof}
Let $T_i=\RS_i(\sigma)$ for $i\in\{1,2\}$.
Let $(\widehat{T}_1,\widehat{T}_2)$ be the pair of tableaux obtained after the first step of the definition of the skew tableau $S$.
By definition of the operation $\bigtriangleup$, the shape of $S$ is obtained as
\begin{equation}
\label{13}
\mathrm{shape}\big(
m_s\to\cdots\to m_1\to \widehat{T}_2
\big)\setminus\nu.\end{equation}
By definition of the algorithm, for $i\in\{1,2\}$ we have
\[\widehat{T}_i=\RS_i(w)\quad\mbox{where}\quad w:=\left(
\begin{array}{cccccccccc}
j_1 & \cdots & j_r & n+1 & \cdots & n+s \\
i_1 & \cdots & i_r & \ell_s & \cdots & \ell_1
\end{array}
\right).\]
Let $j'_1,\ldots,j'_n$ be the elements of the set $\{j_1,\ldots,j_r,n+1,\ldots,n+s\}$ ordered in such a way that $w(j'_1)<\cdots<w(j'_n)$; equivalently $\widehat{\tau}(j'_1)(=s+1)<\cdots<\widehat{\tau}(j'_n)$.
Then we have
\begin{eqnarray*}
\RSr(\widehat{\tau}) & = & \RSl(\widehat{\tau}^{-1}) = \rinsert(m_s,\ldots,m_1,j'_1,\ldots,j'_n) \\
 & = & (m_s\to\cdots\to m_1\to\rinsert(j'_1,\ldots,j'_n)) \\
 & = & (m_s\to\cdots\to m_1\to\widehat{T}_2).
\end{eqnarray*}
Comparing this equality with (\ref{13}) completes the proof of the lemma.
\end{proof}

\begin{lemma}
\label{L8}
\begin{thmenumeralph}
\item The map $\psi$ restricts to a bijection
\[\{y\in\MatnC:(\tau y,y\tau)\in\lie{n} \times \lie{n}\}
\xrightarrow{\;\;\sim\;\;}
\{z\in\Mat_{n+s}:(\widehat{\tau} z,z\widehat{\tau})\in\widehat{\mathfrak{n}}\times\widehat{\mathfrak{n}}\}\]
where $\widehat{\mathfrak{n}}\subset\Mat_{n+s}$ stands for the subspace of strictly upper triangular matrices.
\item For any $y\in\MatnC$ such that $(\tau y,y\tau)\in\lie{n} \times \lie{n} $, we have
\[\psi((y\tau)^{k}y)=(\psi(y)\widehat{\tau})^{k}\psi(y)\quad\mbox{for all $k\geq 0$}.\]
\end{thmenumeralph}
\end{lemma}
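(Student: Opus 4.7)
The plan is to reduce both parts to transparent block-matrix computations, exploiting the fact that $\widehat{\tau}$ is designed precisely so that $\psi$ intertwines the ``partial'' data for $\tau$ with the ``full permutation'' data for $\widehat{\tau}$.

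For part (a), I would first note that $\psi$ is manifestly injective, with image equal to the set of $z\in\Mat_{n+s}$ whose first $s$ columns and last $s$ rows vanish. Using the formulas $(\widehat{\tau}z)_{ij}=z_{\widehat{\tau}^{-1}(i),j}$ and $(z\widehat{\tau})_{ij}=z_{i,\widehat{\tau}(j)}$, the two conditions $\widehat{\tau}z,\,z\widehat{\tau}\in\widehat{\mathfrak{n}}$ translate, respectively, to
\begin{equation*}
z_{aj}=0 \ \text{ whenever } \ j\leq\widehat{\tau}(a),
\qquad
z_{ib}=0 \ \text{ whenever } \ i\geq\widehat{\tau}^{-1}(b).
\end{equation*}
Plugging in the explicit values $\widehat{\tau}(m_k)=s+1-k$, $\widehat{\tau}(j_k)=i_k+s$, $\widehat{\tau}(n+k)=\ell_{s+1-k}+s$, a short case analysis on the three index blocks $\{m_k\}$, $\{j_k\}$, $\{n+k\}$ shows that every entry $z_{ab}$ with $b\in[1,s]$ or $a\in[n+1,n+s]$ is forced to vanish. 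Hence $z=\psi(y)$ for a unique $y\in\Mat_n$, and the surviving constraints on $\psi(y)\widehat{\tau}$ (resp.\ $\widehat{\tau}\psi(y)$) collapse exactly to $y\tau\in\mathfrak{n}$ (resp.\ $\tau y\in\mathfrak{n}$), yielding the bijection.

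For part (b), the key step is to establish the multiplicative identity
\begin{equation*}
\psi(y)\,\widehat{\tau}\,\psi(y')=\psi(y\tau y') \qquad \text{for all } y,y'\in\Mat_n,
\end{equation*}
from which the lemma follows by induction on $k$. This identity is checked by direct computation: in $\widehat{\tau}\psi(y')$, only the rows indexed by $s+i_{k'}$ (for $k'\in[r]$) are nonzero in the ``right'' block of columns, and they reproduce the rows $j_{k'}$ of $y'$; pairing these with the columns $i_{k'}$ of $y$ coming from $\psi(y)$ yields precisely the sum $\sum_{k'} y_{\cdot,i_{k'}}\,y'_{j_{k'},\cdot}=y\tau y'$ in the top-right $n\times n$ block, with all other blocks zero. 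The inductive step is then immediate:
\begin{equation*}
(\psi(y)\widehat{\tau})^{k+1}\psi(y)=\psi(y)\,\widehat{\tau}\,\psi((y\tau)^ky)=\psi\bigl(y\tau(y\tau)^ky\bigr)=\psi\bigl((y\tau)^{k+1}y\bigr).
\end{equation*}

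The main obstacle, I expect, is the case analysis in part (a): to prove that both nilradical conditions \emph{jointly} annihilate the first $s$ columns and the last $s$ rows of $z$, one must, for each potentially nonzero position $(a,b)$ with $b\leq s$ or $a\geq n+1$, exhibit an index showing that either $b\leq\widehat{\tau}(a)$ or $a\geq\widehat{\tau}^{-1}(b)$, and each of the three index blocks $\{m_k\}$, $\{j_k\}$, $\{n+k\}$ contributes a separate subcase. Once this bookkeeping is carried out, the remainder of the argument reduces to the elementary block identity above.
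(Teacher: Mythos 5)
Your proposal is correct and follows essentially the same route as the paper's proof. For part (a), both translate the conditions $\widehat{\tau}z,z\widehat{\tau}\in\widehat{\mathfrak{n}}$ into index conditions and carry out the three-block case analysis to kill the first $s$ columns and last $s$ rows; the paper then closes the loop with the block decomposition $\widehat{\tau}=\left(\begin{smallmatrix}\alpha & 0\\ \tau & \beta\end{smallmatrix}\right)$ exactly as you indicate. For part (b), you extract the clean multiplicative identity $\psi(y)\,\widehat{\tau}\,\psi(y')=\psi(y\tau y')$ and then induct; the paper performs precisely the same block computation but directly inside the inductive step, using $\widehat{y}\widehat{\tau}=\left(\begin{smallmatrix} y\tau & \delta\\ 0 & 0\end{smallmatrix}\right)$, so the difference is merely one of packaging. (One cosmetic slip: in $\widehat{\tau}\psi(y')$ the first $s$ rows of the right column-block need not vanish — they contain $\alpha y'$ — but those rows are annihilated upon left-multiplication by $\psi(y)$, so your conclusion is unaffected.)
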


\begin{proof}
To show part {\rm (a)}, it suffices to prove the following claims:
\begin{eqnarray}
\label{Claim1}
& \{z\in\Mat_{n+s}:(\widehat{\tau} z,z\widehat{\tau})\in\widehat{\mathfrak{n}}\times\widehat{\mathfrak{n}}\}\subset\Im \psi; \\
\label{Claim2}
& \mbox{for $y\in\MatnC$ and $\widehat{y}:=\psi(y)$, \quad $(\tau y,y\tau)\in\lie{n} \times \lie{n}\iff(\widehat{\tau} \widehat{y},\widehat{y}\widehat{\tau})\in\widehat{\mathfrak{n}}\times\widehat{\mathfrak{n}}$.}
\end{eqnarray}

Let us show (\ref{Claim1}). Let $z\in\Mat_{n+s}$ such that $(\widehat{\tau} z,z\widehat{\tau})\in\widehat{\mathfrak{n}}\times\widehat{\mathfrak{n}}$.
For $i\in\{1,\ldots, s\}$, using that $\widehat{\tau} z\in\widehat{\mathfrak{n}}$, we have
\[z_{n+i,j}=(\widehat{\tau} z)_{\ell'_{s-i+1},j}=0\quad\mbox{whenever $1\leq j\leq \ell'_{s-i+1}$}.\]
Note that $\{\ell'_{s-i+1}+1,\ldots,n+s\}\subset\{\ell'_{s-k+1}:1\leq k<i\}\cup\{i'_1,\ldots,i'_r\}$. Since $z\widehat{\tau}\in\widehat{\mathfrak{n}}$, we also have 
\[z_{n+i,i'_k}=(z\widehat{\tau})_{n+i,j_k}=0\quad\mbox{for all $k\in\{1,\ldots,r\}$\ \ (since $j_k\leq n$)}\]
and
\[z_{n+i,\ell'_{s-k+1}}=(z\widehat{\tau})_{n+i,n+k}=0\quad\mbox{whenever $1\leq k<i$}.\]
Altogether this implies that
\begin{equation}
\label{14}
z_{n+i,j}=0\quad\mbox{for all $i\in\{1,\ldots,s\}$, all $j\in\{1,\ldots,n+s\}$}.
\end{equation}
Let $j\in\{1,\ldots,s\}$. Since $\widehat{\tau} z\in\widehat{\mathfrak{n}}$, we have
\[z_{j_k,j}=(\widehat{\tau} z)_{i'_k,j}=0\quad\mbox{for all $k\in\{1,\ldots,r\}$\ \ (since $i'_k\geq s+1$)}\]
and
\[z_{m_{s-k+1},j}=(\widehat{\tau} z)_{k,j}=0\quad\mbox{whenever $j\leq k\leq s$.}\]
For $k\in\{1,\ldots,j-1\}$, using that $z\widehat{\tau}\in\widehat{\mathfrak{n}}$, we get
\[z_{m_{s-k+1},j}=(z\widehat{\tau})_{m_{s-k+1},m_{s-j+1}}=0\quad\mbox{(since $m_{s-k+1}>m_{s-j+1}$)}.\]
Since $\{1,\ldots,n\}=\{j_1,\ldots,j_r\}\cup\{m_1,\ldots,m_s\}$, altogether we obtain
\begin{equation}
\label{15}
z_{i,j}=0\quad\mbox{for all $i\in\{1,\ldots,n\}$, all $j\in\{1,\ldots,s\}$.}
\end{equation}
From (\ref{14}) and (\ref{15}), we conclude that (\ref{Claim1}) holds true.

Next, let us show (\ref{Claim2}). Note that the matrix corresponding to $\widehat{\tau}$ is of the form
\[\widehat{\tau}=\left(\begin{array}{cc}
\alpha & 0 \\ \tau & \beta
\end{array}\right)\quad\mbox{for some matrices $\alpha\in\Mat_{s,n}$, $\beta\in\Mat_{n,s}$}.\]
This yields
\begin{equation}
\label{18}
\widehat{\tau}\widehat{y}=\left(\begin{array}{cc}
0 & \gamma \\ 0 & \tau y
\end{array}\right)\quad\mbox{and}\quad
\widehat{y}\widehat{\tau}=\left(\begin{array}{cc}
y\tau & \delta \\ 0 & 0
\end{array}\right)
\end{equation}
for some $\gamma\in\Mat_{s,n}$ and $\delta\in\Mat_{n,s}$. Whence the equivalence
\[(\widehat{\tau} \widehat{y},\widehat{y}\widehat{\tau})\in\widehat{\mathfrak{n}}\times\widehat{\mathfrak{n}}\iff (\tau y,y\tau)\in\mathfrak{n}\times\mathfrak{n}\]
which establishes (\ref{Claim2}).

Let us prove part {\rm (b)} by induction in $k\geq 0$.  
The case $k=0$ is trivial. So assume that
$\psi((y\tau)^{k}y)=(\psi(y)\widehat{\tau})^{k}\psi(y)$. Set $\widehat{y}=\psi(y)$.
Using the second equality in (\ref{18}), we get 
\[
(\widehat{y}\widehat{\tau})^{k+1}\widehat{y}=\left(\begin{array}{cc}
y\tau & \delta \\ 0 & 0
\end{array}\right)\left(\begin{array}{cc} 0 & (y\tau)^{k}y \\ 0 & 0 \end{array}\right)=
\left(\begin{array}{cc} 0 & (y\tau)^{k+1}y \\ 0 & 0 \end{array}\right)
\]
whence the equality $(\widehat{y}\widehat{\tau})^{k+1}\widehat{y}=\psi((y\tau)^{k+1}y)$.
\end{proof}

\begin{proof}[Proof of Theorem \ref{T5}\,{\rm (2)}]
We set $\widehat{\omega}:=\left(\begin{array}{c}\widehat{\tau} \\ 1_{n+s}\end{array}\right)$, where $\widehat{\tau}\in\mathfrak{S}_{n+s}$ is the permutation given above. 
By Lemmas \ref{L5} and \ref{L6}\,{\rm (b)} applied to $\widehat{\omega}$ and $\widehat{\mathfrak{n}}\times\widehat{\mathfrak{n}}$, 
for a generic $\widehat{y}$ in the space $\{\widehat{y}\in\Mat_{n+s}:(\widehat{\tau}\widehat{y},\widehat{y}\widehat{\tau})\in\widehat{\mathfrak{n}}\times\widehat{\mathfrak{n}}\}$, we have
\[
\dim \ker \bigl((\widehat{y}\widehat{\tau})^{2k}\widehat{y}\bigr)
=
\numberofboxes{\bigl(\shape{\RSl(\widehat{\tau})}\bigr)}{2k+1}
\quad\forall k\geq 0.
\]
By Lemma \ref{L8}\,{\rm (a)}, we have $\widehat{y}=\psi(y)$ for $y\in\MatnC$ generic in the space $\{y\in\MatnC:(\tau y,y\tau)\in\lie{n} \times \lie{n}\}$, and by Lemma \ref{L6}\,{\rm (b)} we may suppose that
\[\dim \ker \bigl( (y\tau)^{2k}y \bigr) 
=
\numberofsigns{\bigl(\Xis(\omega)\bigr)}{2k+1}{+}
\quad\forall k\geq 0.\]
Moreover in view of Lemma \ref{L8}\,{\rm (b)} we have 
$\dim\ker \bigl( (y\tau)^{2k}y \bigr) 
=\dim\ker \bigl( (\widehat{y}\widehat{\tau})^{2k}\widehat{y} \bigr) - s $ for all $k$.
By Lemma \ref{L7}, this yields the equality stated in Theorem \ref{T5}\,{\rm (2)}.
\end{proof}

\subsection{Proof of Theorem \ref{T5}\,{\rm (3)}}

We consider the sets $I:=\{i_1,\ldots,i_r\}$ and
$J:=\{j_1,\ldots,j_r\}$, and the increasing bijections
$w_I:I\to\{1,\ldots,r\}$ and $w_J:J\to\{1,\ldots,r\}$. The bijection
$\sigma:J\to I$ gives rise to a permutation
\[\tau':=w_I\sigma w_J^{-1}\in\mathfrak{S}_r.\]
Let us consider linear maps
\[\xi:\Mat_r \to \MatnC,\ z\mapsto \overline{z}\quad
\mbox{with $\overline{z}_{j,i}=\left\{
\begin{array}{ll}
z_{w_J(j),w_I(i)} & \mbox{if $(j,i)\in J\times I$,} \\
0 & \mbox{if $(j,i)\notin J\times I$}
\end{array}
\right.$}\] and
\[
\phi:\MatnC\to\Mat_r,\ y\mapsto
y'\quad \mbox{with $y'_{w_J(j),w_I(i)}=y_{j,i}$ for all $(j,i)\in
J\times I$.}
\]
Let $\mathfrak{n}'\subset\Mat_r$ denote the
subspace of strictly upper triangular matrices.

\begin{lemma}\label{L9}
\begin{thmenumeralph}
\item 
$\phi\circ\xi=\mathrm{id}_{\Mat_r}$;
\item
For $z\in\Mat_r$ and $\overline{z}=\xi(z)$, we have:
\[(\tau' z,z\tau')\in\mathfrak{n}'\times\mathfrak{n}'\implies
(\tau \overline{z},\overline{z}\tau)\in\mathfrak{n}\times\mathfrak{n};\]
\item
For $y\in\MatnC$ and $y'=\phi(y)$, we have:
\[(\tau y,y \tau)\in\mathfrak{n}\times\mathfrak{n}\implies
(\tau' y',y'\tau')\in\mathfrak{n}'\times\mathfrak{n}'\] and
\[
{}^t( \tau (y \tau )^ky \tau )=\xi(\,{}^t(\tau'(y'\tau')^ky'\tau')\,)\quad\mbox{for
all $k\geq 0$.}
\]
\end{thmenumeralph}
\end{lemma}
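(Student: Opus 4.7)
The plan is to verify each item by a direct index-chase, exploiting that the maps $\xi$ and $\phi$ are defined by transporting data between the index sets $\{1,\ldots,r\}$ and the subsets $I, J \subset \{1,\ldots,n\}$ via the \emph{order-preserving} bijections $w_I$ and $w_J$, and that these bijections conjugate $\sigma$ into $\tau'$, i.e., $\tau'(w_J(j)) = w_I(\sigma(j))$ for all $j \in J$.

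Part (a) is immediate: for $z \in \Mat_r$ and $(a,b) \in \{1,\ldots,r\}^2$, since $(w_J^{-1}(a), w_I^{-1}(b)) \in J \times I$, the definitions give
\[
\phi(\xi(z))_{a,b} = \xi(z)_{w_J^{-1}(a),\, w_I^{-1}(b)} = z_{w_J(w_J^{-1}(a)),\, w_I(w_I^{-1}(b))} = z_{a,b}.
\]
For part (b), the key observation is that $\tau_{i,j} = 1$ precisely when $j \in J$ and $i = \sigma(j)$, hence for any $M \in \Mat_n$, the entry $(\tau M)_{a,b}$ equals $M_{\sigma^{-1}(a),b}$ if $a \in I$ and vanishes otherwise, and similarly $(M\tau)_{a,b}$ equals $M_{a,\sigma(b)}$ if $b \in J$ and vanishes otherwise. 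Applying this to $M = \overline{z}$ and using that $\overline{z}$ is itself supported in rows $J$ and columns $I$, one sees $(\tau \overline{z})_{a,b}$ vanishes outside $I \times I$, and for $(a,b) \in I \times I$,
\[
(\tau \overline{z})_{a,b} = \overline{z}_{\sigma^{-1}(a),\, b} = z_{w_J(\sigma^{-1}(a)),\, w_I(b)} = z_{(\tau')^{-1}(w_I(a)),\, w_I(b)} = (\tau' z)_{w_I(a),\, w_I(b)}.
\]
Since $w_I$ is order-preserving, strict upper-triangularity of $\tau' z$ transfers to $\tau \overline{z}$; the argument for $\overline{z}\tau$ is identical using $w_J$.

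For part (c), the first implication is dual to (b): the identities $(\tau' y')_{a',b'} = (\tau y)_{w_I^{-1}(a'),\, w_I^{-1}(b')}$ and $(y'\tau')_{a',b'} = (y\tau)_{w_J^{-1}(a'),\, w_J^{-1}(b')}$ follow from the same calculation, and order-preservation of $w_I, w_J$ then gives the claim. For the transposed identity, write $A_k := \tau(y\tau)^k y\tau = (\tau y)^{k+1}\tau$; since every $\tau$-factor forces rows in $I$ and columns in $J$, $A_k$ is supported on $I \times J$, and therefore ${}^tA_k$ is supported on $J \times I$, which is exactly the support of matrices in the image of $\xi$. Expanding
\[
(A_k)_{i_0,\, j_{k+1}} = \sum_{j_1,\ldots,j_k \in J} y_{\sigma^{-1}(i_0),\, \sigma(j_1)}\, y_{j_1,\, \sigma(j_2)} \cdots y_{j_k,\, \sigma(j_{k+1})}
\]
for $(i_0, j_{k+1}) \in I \times J$, and substituting $j_\ell \mapsto w_J(j_\ell)$ together with $\sigma(j_\ell) \mapsto \tau'(w_J(j_\ell))$ and $\sigma^{-1}(i_0) \mapsto (\tau')^{-1}(w_I(i_0))$, each summand transforms bijectively into the corresponding summand of the expansion of $(\tau'(y'\tau')^k y'\tau')_{w_I(i_0),\, w_J(j_{k+1})}$. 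Transposing and invoking the definition of $\xi$ then yields the stated identity.

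The main obstacle is the combinatorial bookkeeping in the alternating-chain expansion of $A_k$: one must verify that the bijection between chains of intermediate indices in $J$ and chains in $\{1,\ldots,r\}$ (induced by $w_J$) simultaneously transports every factor of $\sigma$ into the corresponding factor of $\tau'$, which ultimately rests on the conjugation relation $\tau' = w_I \sigma w_J^{-1}$ and the order-preservation of $w_I, w_J$.
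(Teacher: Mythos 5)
Your proposal is correct and follows essentially the same route as the paper: parts (a), (b), and the first implication of (c) rest on the same index-transfer identities (the paper's equations (\ref{19})--(\ref{21})) combined with the conjugation relation $\tau' = w_I\sigma w_J^{-1}$ and the order-preservation of $w_I$, $w_J$. The only genuine variation is in the final identity of part (c), where the paper argues by induction on $k$ (via its support statement (\ref{22}) and the entry-level identity (\ref{23})), while you instead unfold $\tau(y\tau)^k y\tau$ into an explicit sum over chains of intermediate indices forced into $J$ by the $\tau$-factors and verify the bijective index-transfer term by term; this is a direct, non-inductive proof of the same fact, a bit heavier notationally but more transparent about the combinatorial bijection that the paper's induction leaves implicit.
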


\begin{proof}
First note that
\[(\phi\circ\xi(z))_{i,j}=(\xi(z))_{w_J^{-1}(i),w_I^{-1}(j)}=z_{w_J(w_J^{-1}(i)),w_I(w_I^{-1}(j))}=z_{i,j}\quad \mbox{for all $i,j\in\{1,\ldots,r\}$,}\]
whence {\rm (a)}. Before showing parts {\rm (b)} and {\rm (c)}, we note that
if $z=\phi(y)$ then
\begin{align}
\label{19}
& (\tau y)_{i_k,i_\ell}=y_{j_k,i_\ell}=z_{w_J(j_k),w_I(i_\ell)}=(\tau' z)_{w_I(i_k),w_I(i_\ell)}\,,\\
\label{20}
& (y \tau)_{j_k,j_\ell}=y_{j_k,i_\ell}=z_{w_J(j_k),w_I(i_\ell)}=(z\tau')_{w_J(j_k),w_J(j_\ell)}\,, \quad \mbox{ and }\\
\label{21}
& (\tau y \tau)_{i_k,j_\ell}=y_{j_k,i_\ell}=z_{w_J(j_k),w_I(i_\ell)}=(\tau'z\tau')_{w_I(i_k),w_J(j_\ell)}
\end{align}
for all $k,\ell\in\{1,\ldots,r\}$.

Let us show part {\rm (b)}. Assume that $(\tau'z,z\tau')\in\mathfrak{n}'\times\mathfrak{n}'$. Note that $(\tau\overline{z})_{i,j}=0$ if $i\notin I$ (due to the definition of $I$) or if $j\notin I$ (due to the definition of $\overline{z}$).
Similarly $(\overline{z}\tau)_{i,j}=0$ if $(i,j)\notin J\times J$.
By {\rm (a)} we have $z=\phi(\overline{z})$. By (\ref{19}),\ for $i,j\in I$ such that $i\geq j$, we get $(\tau\overline{z})_{i,j}=(\tau'z)_{w_I(i),w_I(j)}=0$ since $w_I(i)\geq w_I(j)$ (because $w_I$ is increasing) and $\tau'z\in\mathfrak{n}'$. By (\ref{20}) we get similarly $(\overline{z}\tau)_{i,j}=(z\tau')_{w_J(i),w_J(j)}=0$
whenever $i,j\in J$ satisfy $i\geq j$. Altogether we have shown that $(\tau\overline{z},\overline{z}\tau)\in\lie{n}\times\lie{n}$.

Let us show part {\rm (c)}. Assume that $(\tau y,y \tau)\in\lie{n}\times\lie{n}$. Then, (\ref{19}) and (\ref{20}) yield
\[
(\tau'y')_{i,j}=(\tau y)_{w_I^{-1}(i),w_I^{-1}(j)}=0
\quad\mbox{and}\quad
(y'\tau')_{i,j}=(y \tau)_{w_J^{-1}(i),w_J^{-1}(j)}=0
\]
whenever $i,j\in\{1,\ldots,r\}$ are such that $i\geq j$. Whence $(\tau'y',y'\tau')\in\lie{n}'\times\lie{n}'$. 
It remains to show the second assertion in part {\rm (c)}. 
By definition of $I,J$, we have
\begin{equation}
\label{22}
(\tau(y\tau)^ky\tau)_{i,j}=0\quad\mbox{for all $k\geq 0$},\quad\mbox{if}\quad(i,j)\notin I\times J.\end{equation}
Next fix $(i,j)\in I\times J$ and let us show the formula
\begin{equation}
\label{23}
(\tau(y\tau)^ky\tau)_{i,j}=(\tau'(y'\tau')^ky'\tau')_{w_I(i),w_J(j)}
\end{equation}
by induction on $k\geq 0$. The case $ k = 0 $ follows from (\ref{21}). Assuming that formula (\ref{23}) holds for $k$, by using (\ref{22}) and (\ref{19}), we see that
\begin{eqnarray*}
(\tau(y\tau)^{k+1}y\tau)_{i,j}=\sum_{\ell=1}^n(\tau y)_{i,\ell} (\tau(y\tau)^{k}y\tau)_{\ell,j}=
\sum_{\ell\in I}(\tau y)_{i,\ell} (\tau(y\tau)^{k}y\tau)_{\ell,j} \\
=\sum_{\ell=1}^r(\tau'y')_{w_I(i),\ell}(\tau'(y'\tau')^ky'\tau')_{\ell,w_J(j)}=(\tau'(y'\tau')^{k+1}y'\tau')_{w_I(i),w_J(j)}.
\end{eqnarray*}
This establishes (\ref{23}). Finally relations (\ref{22}) and (\ref{23}) yield the desired equality ${}^t(\tau(y\tau)^ky\tau)=\xi(\,{}^t(\tau'(y'\tau')^ky'\tau')\,)$
for all $k\geq 0$.
\end{proof}

\begin{proof}[Proof of Theorem \ref{T5}\,{\rm (3)}]
Let $y\in\MatnC$ be an element which is generic in the space $\{y\in\MatnC:(\tau y,y\tau)\in\lie{n} \times \lie{n} \}$, so that
\begin{equation}\label{24}
\numberofsigns{\bigl(\Xis(\omega)\bigr)}{2k+1}{-}
= \dim \ker \bigl( \tau(y\tau)^{2k}y\tau \bigr)
\end{equation}
(by Lemma \ref{L6}\,{\rm (b)}). 
By Lemma \ref{L9} we may assume that $y':=\phi(y)$ is generic in the space
$\{z\in\Mat_r: (\tau' z,z\tau')\in\lie{n}'\times\lie{n}'\}$ 
and, by Lemmas \ref{L5} and \ref{L6}\,{\rm (b)}, we may assume that
\begin{equation}\label{25}
\dim \ker \bigl( \tau'(y'\tau')^{2k}y'\tau' \bigr) 
= 
\numberofboxes{ \bigl( \shape{\RSl(\tau')} \bigr) }{2k+1}.
\end{equation}
In addition, by Lemma \ref{L9}\,{\rm (c)}, we have
\begin{equation}\label{26}
\dim\ker \bigl( \tau(y\tau)^{2k}y\tau \bigr) 
=\dim \ker \bigl( \tau'(y'\tau')^{2k}y'\tau' \bigr) + s .
\end{equation}
Finally note that the Young tableaux $\RSl(\tau')$ and $\RSl(\sigma)$ are of the same shape, because we have $\tau'=w_I\sigma w_J^{-1}$ where $w_I,w_J$ are increasing bijections. Then, part {\rm (3)} of Theorem \ref{T5} follows from (\ref{24}), (\ref{25}), and (\ref{26}).
\end{proof}

%%%%%%%%%%%%%%%%%%%%%%%%%%

\section{Proof of Theorem \ref{T1}}\label{section-11}%\label{section-7}

In this section we focus on the images of the conormal variety
$\mathcal{Y}$ by the maps
$\pi_\mathfrak{k}:\mathcal{Y}\to\mathcal{N}_\mathfrak{k}$ and
$\pi_\mathfrak{s}:\mathcal{Y}\to\mathcal{N}_\mathfrak{s}$. 
Our goal is to prove Theorem \ref{T1}, which describes the irreducible components of 
the nilpotent
varieties
$\mathcal{N}_{\mathfrak{X},\mathfrak{k}}:=\overline{\pi_\mathfrak{k}(\mathcal{Y})}$
and
$\mathcal{N}_{\mathfrak{X},\mathfrak{s}}:=\overline{\pi_\mathfrak{s}(\mathcal{Y})}$.

Recall that the conormal variety can be described as
\[\mathcal{Y}=\{(\mathfrak{b}'_K,\mathfrak{p}',x)\in K/B_K\times G/P_\mathrm{S}\times \lie{g}:x\in \mathfrak{nil}(\mathfrak{p}'),\ x^\theta\in \mathfrak{nil}(\mathfrak{b}'_K)\}.\]
Here $K/B_K$ (resp. $G/P_\mathrm{S}$) is identified with the set of
Borel subalgebras $\mathfrak{b}'_K\subset\mathfrak{k}$ (resp.
parabolic subalgebras $\mathfrak{p}'\subset\lie{g}$ conjugate
to $\mathfrak{p}_\mathrm{S}$). Note that
$\bigcup_{\mathfrak{b}'_K\in K/B_K}\mathfrak{nil}(\mathfrak{b}'_K)$
coincides with the nilpotent cone $ \nilpotentsof{\lie{k}} $ of $\mathfrak{k}$, 
while $\bigcup_{\mathfrak{p}'\in
G/P_\mathrm{S}}\mathfrak{nil}(\mathfrak{p}')=G\cdot\mathfrak{nil}(\mathfrak{p}_\mathrm{S})\subset\lie{g}=\Mat_{2n}$
is the subset of nilpotent matrices of square zero (the closure of the Richardson orbit corresponding to $ \lie{p}_{\mathrm{S}} $). 
This implies
that the images of $\mathcal{Y}$ by the maps
$\pi_\mathfrak{k}:(\mathfrak{b}'_K,\mathfrak{p}',x)\mapsto x^\theta$
and $\pi_\mathfrak{s}:(\mathfrak{b}'_K,\mathfrak{p}',x)\mapsto
x^{-\theta}$ can be described as
\begin{eqnarray}
\label{7.1a} & \displaystyle \pi_{\mathfrak{k}}(\mathcal{Y})=\left\{
\left(\begin{array}{cc} a & 0 \\ 0 & b
\end{array}\right)\in\mathcal{N}_\mathfrak{k}:\exists y,z\in\MatnC\ \mbox{such that }\left(\begin{array}{cc}
a & y \\ z & b
\end{array}\right)^2=0 \right\}; \\
\label{7.1b} \quad & \displaystyle
\ \ \pi_{\mathfrak{s}}(\mathcal{Y})=\left\{ \left(\begin{array}{cc} 0 &
y \\ z & 0
\end{array}\right)\in\mathfrak{s}:\exists a,b\in\MatnC\ \mbox{nilpotent,}\ \mbox{such that }\left(\begin{array}{cc}
a & y \\ z & b
\end{array}\right)^2=0 \right\}.
\end{eqnarray}
Given $a,b,y,z\in\MatnC$, note that the equality
$\left(\begin{array}{cc} a & y \\ z & b
\end{array}\right)^2=0$ is equivalent to the following condition:
\begin{equation}
\label{7.1} a^2+yz=b^2+zy=ay+yb=za+bz=0.
\end{equation}
Since $\pi_\mathfrak{k}(\mathcal{Y})$ (resp.
$\mathcal{N}_{\mathfrak{X},\mathfrak{k}}$) is a $K$-stable subset of
$\mathcal{N}_\mathfrak{k}$, it is a union of nilpotent $K$-orbits of
the form $\mathcal{O}_\lambda\times\mathcal{O}_\mu$ for pairs of
partitions $(\lambda,\mu)\in\partitionsof{n}{\times}\partitionsof{n}$.
Similarly, since $\pi_\mathfrak{s}(\mathcal{Y})$ (resp.
$\mathcal{N}_{\mathfrak{X},\mathfrak{s}}$) is a $K$-stable subset of
$\mathcal{N}_\mathfrak{s}$, it is a union of $K$-orbits of the form
$\mathfrak{O}_\Lambda$ corresponding to certain signed Young
diagrams $\Lambda\in\signpartitionsof{2n}$.

\begin{lemma}\label{L7.1}
\begin{thmenumeralph}
\item
$\mathcal{O}_\lambda\times\mathcal{O}_\mu\subset\pi_\mathfrak{k}(\mathcal{Y})$
$\implies$
$\mathcal{O}_\mu\times\mathcal{O}_\lambda\subset\pi_\mathfrak{k}(\mathcal{Y})$;
\item
$\mathfrak{O}_\Lambda\subset\pi_\mathfrak{s}(\mathcal{Y})$
$\implies$
$\mathfrak{O}_{\overline\Lambda}\subset\pi_\mathfrak{s}(\mathcal{Y})$,
where we denote by $\overline{\Lambda}$ the signed Young diagram
obtained from $\Lambda$ by switching the $+$'s and the $-$'s.
\end{thmenumeralph}
\end{lemma}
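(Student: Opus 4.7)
The strategy is to exploit conjugation by the block-swap matrix
\[
w = \begin{pmatrix} 0 & 1_n \\ 1_n & 0 \end{pmatrix} \in G,
\]
which lies in $G$ but not in $K$ and which interchanges $V^+$ with $V^-$. A direct matrix computation gives
\[
w \begin{pmatrix} a & y \\ z & b \end{pmatrix} w^{-1} = \begin{pmatrix} b & z \\ y & a \end{pmatrix},
\]
so conjugation by $w$ simultaneously swaps the two diagonal blocks and the two off-diagonal blocks, preserves nilpotency, and preserves the property of squaring to zero.

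For part {\rm (a)}, both $\pi_\mathfrak{k}(\mathcal{Y})$ and the orbit $\mathcal{O}_\lambda\times\mathcal{O}_\mu$ are $K$-stable, so the inclusion is equivalent to the existence of a single representative in $\pi_\mathfrak{k}(\mathcal{Y})$. By the characterization \eqref{7.1a}, the hypothesis provides $a\in\mathcal{O}_\lambda$, $b\in\mathcal{O}_\mu$ and $y,z$ with $\left(\begin{smallmatrix} a & y \\ z & b \end{smallmatrix}\right)^2 = 0$. Applying the displayed conjugation yields the analogous witnesses with $a$ and $b$ (and $y$ and $z$) exchanged, so \eqref{7.1a} gives $\mathcal{O}_\mu\times\mathcal{O}_\lambda\subset\pi_\mathfrak{k}(\mathcal{Y})$.

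For part {\rm (b)}, the same mechanism applied to \eqref{7.1b} produces, from any element $\left(\begin{smallmatrix} 0 & y \\ z & 0 \end{smallmatrix}\right)\in\mathfrak{O}_\Lambda\cap\pi_\mathfrak{s}(\mathcal{Y})$ (witnessed by nilpotent $a,b$), the new element $\left(\begin{smallmatrix} 0 & z \\ y & 0 \end{smallmatrix}\right)\in\pi_\mathfrak{s}(\mathcal{Y})$ (witnessed by nilpotent $b,a$). The only remaining point, which is the main obstacle, is to verify that this new element belongs precisely to $\mathfrak{O}_{\overline\Lambda}$ rather than to some other $K$-orbit with the same $G$-conjugacy class. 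For this I will invoke the numerical characterization of Proposition \ref{P2.2}\,{\rm (b)}. A routine block computation shows that even powers of $\left(\begin{smallmatrix} 0 & y \\ z & 0 \end{smallmatrix}\right)$ are block-diagonal with blocks $(yz)^k$ and $(zy)^k$, while odd powers are block-anti-diagonal with blocks $(zy)^kz$ and $(yz)^ky$, from which the values of $\dim(\ker x^k)\cap V^\pm$ are read off directly. Swapping $y\leftrightarrow z$ interchanges these formulas for $+$ and $-$ in every column of the signed Young diagram, producing $\overline\Lambda$, which completes the proof.
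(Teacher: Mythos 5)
Your proof is correct and uses essentially the same idea as the paper: the paper simply observes that condition \eqref{7.1} is invariant under the swap $(a,b,y,z)\mapsto(b,a,z,y)$, which is precisely the effect of your conjugation by the block-swap matrix $w$. You add a welcome explicit check (via Proposition \ref{P2.2}\,(b) and the block powers of $x$) that the swapped element lands in $\mathfrak{O}_{\overline\Lambda}$, a point the paper leaves implicit.
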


\begin{proof}
The property follows by observing that the quadruple $(a,b,y,z)$
satisfies (\ref{7.1}) if and only if $(b,a,z,y)$ satisfies
(\ref{7.1}), and then by invoking (\ref{7.1a})--(\ref{7.1b}).
\end{proof}

\begin{notation}
{\rm (a)} Let $\Lambda\in\signpartitionsof{2n}$ be a
signed Young diagrams with $k$ rows. Let $\Lambda_i[+)$ (resp.
$\Lambda_i[-)$) be the number of $+$'s (resp. $-$'s) contained in
the $i$-th row of $\Lambda$ but not in the rightmost box of the row.
Let $\Lambda[+)$ and $\Lambda[-)$ be the partitions corresponding to
the lists of numbers $(\Lambda_1[+),\ldots,\Lambda_k[+))$ and
$(\Lambda_1[-),\ldots,\Lambda_k[-))$ 
after rearranging the 
terms in nonincreasing order and erasing the terms equal to zero if necessary.

For instance,
\[\Lambda=\tableau{+-+-+-,+-+-+,+-+-,-+-+,-+,-}\quad\implies\quad\Lambda[+)=\diagram{3,2,2,1}\quad\mbox{and}\quad\Lambda[-)=\diagram{2,2,2,1,1}\,.\]

{\rm (b)} 
We consider partitions which satisfy the following
condition:
\begin{equation}
\label{7.4} \mbox{the partition
$\lambda=(\lambda_1,\ldots,\lambda_k)$ satisfies
$\left\{\begin{array}{ll} \lambda_{2i-1}-\lambda_{2i}\in\{0,1\}$
 $\forall i\in\{1,\ldots,\lfloor\frac{k}{2}\rfloor\}, \\[1mm]
\mbox{if $k$ is odd then $\lambda_k=1$.}
\end{array}\right.$}
\end{equation}
If $x$ is a nilpotent matrix whose Jordan normal form is encoded by
the partition $\mu=(\mu_1^{\alpha_1},\ldots,\mu_\ell^{\alpha_\ell})$
with numbers $\mu_1>\ldots>\mu_\ell$ and multiplicities
$\alpha_1,\ldots,\alpha_\ell\geq 1$, then the Jordan normal form of
$x^2$ is encoded by the partition
$(\lceil\frac{\mu_1}{2}\rceil^{\alpha_1},\lfloor\frac{\mu_1}{2}\rfloor^{\alpha_1},\ldots,
\lceil\frac{\mu_\ell}{2}\rceil^{\alpha_\ell},\lfloor\frac{\mu_\ell}{2}\rfloor^{\alpha_\ell})$.
This readily implies that
\begin{equation}\label{7.5} 
\text{
\begin{minipage}{.85\textwidth}
a partition $\lambda\in\partitionsof{n}$ encodes the Jordan normal form of a
nilpotent matrix of the form $x^2$ if and only if $\lambda$
satisfies (\ref{7.4}).
\end{minipage}
}
\end{equation}
\end{notation}

\begin{lemma}\label{L7.2} 
Let $\Lambda\in\signpartitionsof{2n}$ be a
signed Young diagram.
\begin{thmenumeralph}
\item 
If the $K$-orbit
$\mathfrak{O}_\Lambda$ is contained in
$\pi_\mathfrak{s}(\mathcal{Y})$, then the partitions $\Lambda[+)$
and $\Lambda[-)$ satisfy condition (\ref{7.4}).
\item
Any 
$x\in\pi_\mathfrak{s}(\mathcal{Y})$ satisfies $x^n=0$ if $n$ is
even and $x^{n+1}=0$ if $n$ is odd. Thus, if
$\mathfrak{O}_\Lambda$ is contained in
$\pi_\mathfrak{s}(\mathcal{Y})$, then $\Lambda$ has at most $n$
(resp. $n+1$) columns if $ n $ is even (resp. odd).  
\end{thmenumeralph}
\end{lemma}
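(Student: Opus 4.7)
The plan is to work with an element $x = \left(\begin{smallmatrix}0 & y \\ z & 0\end{smallmatrix}\right) \in \mathfrak{O}_\Lambda \subset \pi_\mathfrak{s}(\mathcal{Y})$ together with the matrix $X = \left(\begin{smallmatrix}a & y \\ z & b\end{smallmatrix}\right)$ supplied by~(\ref{7.1b}), where $a, b \in \MatnC$ are nilpotent and $X^2 = 0$ produces the four identities in~(\ref{7.1}). Under the natural identifications $y = x|_{V^-} : V^- \to V^+$ and $z = x|_{V^+} : V^+ \to V^-$, one has $\Im y = x(V^-) \subseteq V^+$ and $\Im z = x(V^+) \subseteq V^-$.

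For part~(a), the key observation is that the intertwining $ay + yb = 0$ forces $\Im y$ to be $a$-invariant: for $v \in V^-$, $a(yv) = -y(bv) \in \Im y$. Consequently $a|_{\Im y}$ is a nilpotent operator, so by~(\ref{7.5}) the Jordan form of its square $(a|_{\Im y})^2 = a^2|_{\Im y}$ automatically satisfies~(\ref{7.4}); it therefore suffices to identify this Jordan form with $\Lambda[+)$. Fix a Jordan basis $(\varepsilon_c)_{c \in \Lambda}$ of $x$ adapted to the signed Young diagram as in Definition~\ref{defOLambda}(c). Since $x$ shifts each row chain one box to the left and swaps $V^+$ with $V^-$, the subspace $\Im y = x(V^-)$ has as basis the vectors $\varepsilon_c$ indexed by the $+$-boxes $c$ whose right neighbour $c'$ still lies in $\Lambda$ (so that $\varepsilon_{c'} \in V^-$ and $x\varepsilon_{c'} = \varepsilon_c$). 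In each row this simply excludes the rightmost $+$-box when that rightmost box carries the sign $+$, which is exactly the definition of $\Lambda_i[+)$; hence $\dim(\Im y \cap \text{row}_i) = \Lambda_i[+)$. Because $a^2 = -x^2|_{V^+}$ acts on each row chain by a leftward shift of two boxes and $\Im y$ is stable under this shift, the restriction $a^2|_{\Im y}$ produces on row $i$ a single Jordan block of size $\Lambda_i[+)$, so its global Jordan form is $\Lambda[+)$. Hence $\Lambda[+)$ satisfies~(\ref{7.4}). The symmetric argument based on $za + bz = 0$, the subspace $\Im z$, and the operator $b^2|_{\Im z}$ yields the claim for $\Lambda[-)$.

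For part~(b), the relations~(\ref{7.1}) give $x^2 = \left(\begin{smallmatrix}-a^2 & 0 \\ 0 & -b^2\end{smallmatrix}\right)$ and therefore $x^{2k} = (-1)^k \left(\begin{smallmatrix}a^{2k} & 0 \\ 0 & b^{2k}\end{smallmatrix}\right)$ for every $k \geq 0$. Since $a$ and $b$ are nilpotent $n \times n$ matrices, $a^n = b^n = 0$. Thus $k = n/2$ gives $x^n = 0$ when $n$ is even, while $k = (n+1)/2$ together with $a^{n+1} = a \cdot a^n = 0$ gives $x^{n+1} = 0$ when $n$ is odd. The assertion about the number of columns of $\Lambda$ then follows because that number equals the largest row length of $\Lambda$, which in turn coincides with the index of nilpotency of $x$.

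The main technical point will be the combinatorial identification in part~(a) that $\Im y \cap \text{row}_i$ is spanned by the nonterminal $+$-boxes of row $i$ (so that its dimension is $\Lambda_i[+)$) and that $a^2|_{\Im y}$ acts on each such row as a single Jordan chain. Once this verification is granted, (\ref{7.5}) applied to the nilpotent operator $a|_{\Im y}$ closes the proof at once.
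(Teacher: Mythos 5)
Your argument is correct and is essentially the paper's, with the roles of $(a,\,y,\,\Im y,\,\Lambda[+))$ and $(b,\,z,\,\Im z,\,\Lambda[-))$ interchanged (the paper treats $\Lambda[-)$ directly via $b^2+zy=0$ on $\Im z$ and obtains $\Lambda[+)$ by the symmetry of Lemma~\ref{L7.1}). The one point you spell out more fully than the paper is the identification of the Jordan type of $yz|_{\Im y}$ with $\Lambda[+)$, which the paper simply asserts for $zy|_{\Im z}$ and $\Lambda[-)$; your verification via the Jordan basis of Definition~\ref{defOLambda} is sound.
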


\begin{proof} {\rm (a)} Assume that
$\mathfrak{O}_\Lambda\subset\pi_\mathfrak{s}(\mathcal{Y})$. Take
$\left(\begin{array}{cc} 0 & y \\ z & 0
\end{array}\right)\in\mathfrak{O}_\Lambda$. By (\ref{7.1b}), there exist nilpotent
matrices $a,b\in\MatnC$ such that the relations in
(\ref{7.1}) hold. The subspace $\Im z$ is stabilized by the
matrix $zy$. The last equality in (\ref{7.1}) implies that
$\Im z$ is also stabilized by $b$, and the equality
$b^2+zy=0$ yields $zy|_{\Im z}=-(b|_{\Im z})^2$.
Note that the Jordan normal form of the nilpotent endomorphism
$zy|_{\Im z}:\Im z\to\Im z$ corresponds
to the partition $\Lambda[-)$. It therefore follows from (\ref{7.5})
that the partition $\Lambda[-)$ satisfies (\ref{7.4}). A similar
argument (or Lemma \ref{L7.1}) implies that $\Lambda[+)$ also
satisfies (\ref{7.4}).

{\rm (b)} Any element $x\in\pi_\mathfrak{s}(\mathcal{Y})$ is of
the form $x=\left(\begin{array}{cc} 0 & y \\ z & 0
\end{array}\right)$ with $y,z\in\MatnC$
and such that there exist nilpotent matrices
$a,b\in\MatnC$ satisfying (\ref{7.1}). Let $m$ be
any even number such that $m\geq n$. Then
\[x^m=\left(\begin{array}{cc} 0 & y \\ z & 0
\end{array}\right)^m=\left(\begin{array}{cc} (yz)^{\frac{m}{2}} & 0 \\ 0 &
(zy)^{\frac{m}{2}}
\end{array}\right)=(-1)^{\frac{m}{2}}\left(\begin{array}{cc} a^m & 0 \\ 0 &
b^m
\end{array}\right)=0.\]
The proof is complete.
\end{proof}

\begin{proof}[Proof of Theorem \ref{T1}]
Recall that the $K$-orbits of $\mathfrak{X}$ are parametrized by the
elements $\omega\in(\mathfrak{T}_n^2)'$, and each orbit
$\orbitofX_\omega$ gives rise to a conormal bundle
$T^*_{\orbitofX_\omega}\mathfrak{X}\subset \mathcal{Y}$. Thus
\[\overline{\pi_\mathfrak{k}(T^*_{\orbitofX_\omega}\mathfrak{X})}\subset\mathcal{N}_{\mathfrak{X},\mathfrak{k}}\quad\mbox{and}\quad
\overline{\pi_\mathfrak{s}(T^*_{\orbitofX_\omega}\mathfrak{X})}\subset\mathcal{N}_{\mathfrak{X},\mathfrak{s}}.\]
For a matrix $\omega=\left(\begin{array}{c} \tau
\\ 1_n \end{array}\right)$ corresponding to a partial permutation
$\tau\in\mathfrak{T}_n$, Theorems \ref{T4} and \ref{T5} describe the
$K$-orbits which are dense in
$\overline{\pi_\mathfrak{k}(T^*_{\orbitofX_\omega}\mathfrak{X})}$
and
$\overline{\pi_\mathfrak{s}(T^*_{\orbitofX_\omega}\mathfrak{X})}$.
Choosing $\tau=1_n$, we get (by Theorem \ref{T4})
\[\Xik(\omega)=(\mathrm{shape}(\RSl(1_n)),\mathrm{shape}(\RSr(1_n)))=((n),(n)),\]
hence
\[\overline{\mathcal{O}_{(n)}\times\mathcal{O}_{(n)}}=\overline{\pi_{\mathfrak{k}}(T^*_{\orbitofX_\omega}\mathfrak{X})}\subset\mathcal{N}_{\mathfrak{X},\mathfrak{k}}.\]
Since the $K$-orbit $\mathcal{O}_{(n)}\times\mathcal{O}_{(n)}$ is
dense in $\mathcal{N}_{\mathfrak{k}}$, we already obtain
$\mathcal{N}_{\mathfrak{X},\mathfrak{k}}=\mathcal{N}_{\mathfrak{k}}$.

It remains to consider $\mathcal{N}_{\mathfrak{X},\mathfrak{s}}$.
For $n=1$, the equality $\mathcal{N}_{\mathfrak{X},\mathfrak{s}}=\mathcal{N}_\mathfrak{s}$ easily follows from (\ref{7.1b}). Hereafter we assume that $n\geq 2$.
Choosing $\tau=1_n$, we get (by Theorem \ref{T5}, and in view
of Example \ref{E6.1}\,{\rm (a)})
\[\Xis(\omega)=\tableau{+-+-\cdots,-+-+\cdots}=\Lambda_0\mbox{ (as in Theorem \ref{T1})}\quad\mbox{hence}\quad \overline{\mathfrak{O}_{\Lambda_0}}=\overline{\pi_\mathfrak{s}(T^*_{\orbitofX_\omega}\mathfrak{X})}\subset\mathcal{N}_{\mathfrak{X},\mathfrak{s}}.\]
Choosing $\tau$ as in Example \ref{E6.1}\,{\rm (b)}, we have
$\Xis(\omega)=\Lambda_-$ (the signed Young diagram of
Theorem \ref{T1}\,{\rm (a)}--{\rm (b)}) hence
$\overline{\mathfrak{O}_{\Lambda_-}}\subset\mathcal{N}_{\mathfrak{X},\mathfrak{s}}$.
From Lemma \ref{L7.1}\,{\rm (b)}, we deduce
$\overline{\mathfrak{O}_{\Lambda_+}}\subset\mathcal{N}_{\mathfrak{X},\mathfrak{s}}$
with $\Lambda_+$ as in Theorem \ref{T1}\,{\rm (a)}--{\rm (b)}.
Altogether this yields
\[\overline{\mathfrak{O}_{\Lambda_+}}\cup\overline{\mathfrak{O}_{\Lambda_0}}\cup\overline{\mathfrak{O}_{\Lambda_-}}\subset\mathcal{N}_{\mathfrak{X},\mathfrak{s}}.\]
It remains to show the reversed inclusion.

First assume that $n$ is even. In this case, the signed Young
diagrams $\Lambda_0$, $\Lambda_+$, and $\Lambda_-$ are described in
Theorem \ref{T1}\,{\rm (a)}. For any $K$-orbit
$\mathfrak{O}_\Lambda\subset\pi_\mathfrak{s}(\mathcal{Y})$, the
corresponding signed Young diagram $\Lambda$ has at most $n$ columns
by Lemma \ref{L7.2}\,{\rm (b)}, hence we have
$\Lambda\preceq\Lambda_0$, $\Lambda\preceq\Lambda_+$, or
$\Lambda\preceq\Lambda_-$ (see Remark \ref{R2.1}\,{\rm (b)}), and we get
$\mathfrak{O}_\Lambda\subset\overline{\mathfrak{O}_{\Lambda_+}}\cup\overline{\mathfrak{O}_{\Lambda_0}}\cup\overline{\mathfrak{O}_{\Lambda_-}}$.
We conclude that the inclusion
$\mathcal{N}_{\mathfrak{X},\mathfrak{s}}\subset\overline{\mathfrak{O}_{\Lambda_+}}\cup\overline{\mathfrak{O}_{\Lambda_0}}\cup\overline{\mathfrak{O}_{\Lambda_-}}$
holds in this case.

Finally assume that $n$ is odd. Then, the signed Young diagrams
$\Lambda_0$, $\Lambda_+$, and $\Lambda_-$ are described in Theorem
\ref{T1}\,{\rm (b)}. Let $\mathfrak{O}_\Lambda$ be a $K$-orbit
contained in $\pi_\mathfrak{s}(\mathcal{Y})$. By Lemma
\ref{L7.2}\,{\rm (b)}, the signed Young diagram $\Lambda$ has at
most $n+1$ columns. If $\Lambda$ has at most $n$ columns, then
$\mathfrak{O}_\Lambda\subset\overline{\mathfrak{O}_{\Lambda_+}}\cup\overline{\mathfrak{O}_{\Lambda_0}}\cup\overline{\mathfrak{O}_{\Lambda_-}}$ (see Remark \ref{R2.1}\,{\rm (b)}).
It remains to consider the case where $\Lambda$ has $n+1$ columns,
i.e., the first row of $\Lambda$ has length $n+1$. Say that the last
box of this row contains the symbol $+$ (the other case is similar),
thus $\Lambda_1[-)=\frac{n+1}{2}$. If $\Lambda$ is not $\Lambda_-$, then the second row of $\Lambda$ has
length $<n-1$ or ends with the symbol $-$; in both cases we get
$\Lambda_2[-)<\frac{n-1}{2}$, hence the signed
Young diagram $\Lambda$ does not satisfy (\ref{7.4}), so that Lemma
\ref{L7.2}\,{\rm (a)} yields a contradiction. Therefore $\Lambda_+$
and $\Lambda_-$ are the only signed Young diagrams with exactly
$n+1$ columns whose corresponding $K$-orbits are contained in
$\pi_\mathfrak{s}(\mathcal{Y})$. Altogether, we obtain the desired
inclusion
$\mathcal{N}_{\mathfrak{X},\mathfrak{s}}=\overline{\pi_\mathfrak{s}(\mathcal{Y})}\subset\overline{\mathfrak{O}_{\Lambda_+}}\cup\overline{\mathfrak{O}_{\Lambda_0}}\cup\overline{\mathfrak{O}_{\Lambda_-}}$.
The proof of the theorem is complete.
\end{proof}

\begin{remark}
\label{R7.1}
{\rm (a)} Theorem \ref{T1} shows that the set $\pi_\mathfrak{k}(\mathcal{Y})$ is dense in $\mathcal{N}_\mathfrak{k}$, however this set is not closed (thus the map $\pi_\mathfrak{k}:\mathcal{Y}\to\mathcal{N}_\mathfrak{k}$ is not surjective) unless $n\leq 3$.
(For $n\leq 3$, it is straightforward to see that $\pi_\mathfrak{k}(\mathcal{Y})=\mathcal{N}_\mathfrak{k}$.)

For $n\geq 4$, let us see that a $K$-orbit of the form
$\mathcal{O}_\lambda\times\mathcal{O}_{(1^n)}$ (for
$\lambda
%%=(\lambda_1,\ldots,\lambda_k)
\in\partitionsof{n}$) is not
contained in $\pi_\mathfrak{k}(\mathcal{Y})$  whenever $\lambda_1>
3$. Indeed, take $a\in\mathcal{O}_\lambda$ and
$b=0 \in\mathcal{O}_{(1^n)} $. Assume that
$\mathcal{O}_\lambda\times
\mathcal{O}_{(1^n)}\subset\pi_{\mathfrak{k}}(\mathcal{Y})$. Then, in
view of (\ref{7.1a}), there are matrices
$y,z\in\MatnC$ satisfying (\ref{7.1}).   
%%with respect
%%to the considered elements $a$ and $b$. 
Whence
\[a^3=-a(yz)=(yb)z=0,\]
so that $\lambda_1\leq 3$.

{\rm (b)} The image of the map $\pi_\mathfrak{s}:\mathcal{Y}\to
\mathfrak{s}$ is not closed, unless $n\leq 2$. Indeed, for
$n\geq 3$, Lemma \ref{L7.2} implies that the orbit
$\mathfrak{O}_\Lambda$ corresponding to the signed Young diagram
\[\Lambda=\tableaul{+ & - & + & - \\ + \\ { }^{\vdots} \\ - \\ { }^{\vdots}}\in\signpartitionsof{2n}\]
is not contained in $\pi_\mathfrak{s}(\mathcal{Y})$, whereas
Theorem \ref{T1} shows that
$\mathfrak{O}_\Lambda\subset\overline{\pi_\mathfrak{s}(\mathcal{Y})}$.
\end{remark}

\section*{Index of notation}

\begin{itemize}
\item[\ref{section-1.1}] $\mathcal{B}$,
$\mathcal{N}=\mathcal{N}_{\lie{g}}$, $\Stmap $
\item[\ref{section-1.2}] $K$, $\mathfrak{k}$, $\mathfrak{s}$, $\mathcal{N}_\mathfrak{k}$,
$\mathcal{N}_\mathfrak{s}$, $\mathfrak{X}$, $\mathcal{Y}$,
$\pi_\mathfrak{k}$, $\pi_\mathfrak{s}$
\item[\ref{section-2.2.1}] $\partitionsof{n}$, $\mathrm{shape}(T)$
\item[\ref{section-2.2.2}] $(T\leftarrow a)$, $(a\to T)$,
$\rinsert$, $\cinsert$
\item[\ref{section-2.2.3}] $\RSl(w)$, $\RSr(w)$
\item[\ref{section-2.2.4}] $T*S$
\item[\ref{section-2.1}] $\mathcal{Y}_X$, $\mu_X$
\item[\ref{section-2.1.1}] $\mathfrak{nil}$
\item[\S\ref{section-Steinberg}] $\Stmap $ (type A),
$\mathcal{O}_\lambda$
\item[\ref{section-2.4.1}] $G$, $K$, $V^+$, $V^-$, $\mathfrak{k}$, $\mathfrak{s}$, $x^\theta$, $x^{-\theta}$
\item[\ref{section-2.4.2}] $\mathcal{N}_\mathfrak{k}$, $\mathcal{N}_\mathfrak{s}$,
$\numberofboxes{\lambda}{k}$, $\preceq$, $\signpartitionsof{2n}$,
$\numberofsigns{\Lambda}{k}{+}$, $\numberofsigns{\Lambda}{k}{-}$, $\mathfrak{O}_\Lambda$, $B_K$, $P_\mathrm{S}$ (type AIII)
\item[\S\ref{section-orbits}] $\Mat_n$,
$\mathcal{Y}_{\Mat_n}$, $\mathfrak{T}_n$, $\mathbb{O}_\tau$,
$\mathcal{Y}_\tau$
\item[\S\ref{section-Phi}] $B$, $\lie{n}$, $\Phi=(\Phi_1,\Phi_2)$
\item[\S\ref{section-8}] $(\mathfrak{T}_n^2)'$
\item[\S\ref{section-9}] $\orbitofX_\omega$,
$\Xik$ 
\item[\S\ref{section-10}]
$\Xis$, $\bigtriangleup$
\item[\ref{section-6.2}] $\Lambda[2\lambda]$
\end{itemize}

%\bibliographystyle{amsalpha}
%\bibliography{bib_exotic_nullcone}

\begin{thebibliography}{gz}
\bibitem{Achar-Henderson}
P. N. Achar and A. Henderson,
\emph{Orbit closures in the enhanced nilpotent cone},
Adv. Math. {\bf 219} (2008), no. 1, 27--62. \MR{2435419}
\bibitem{Chriss-Ginzburg} N. Chriss and V. Ginzburg,
\emph{Representation theory and complex geometry},
Birkh\"auser Boston, Inc., Boston, MA, 1997.
\MR{1433132}
\bibitem{Collingwood.McGovern.1993}
D. H. Collingwood and W. M. McGovern, \emph{Nilpotent orbits in semisimple {L}ie algebras},
Van Nostrand Reinhold Co., New York, 1993.
\MR{1251060}
\bibitem{Finkelberg.Ginzburg.Travkin.2009}
M. Finkelberg, V. Ginzburg, and R. Travkin,
\emph{Mirabolic affine {G}rassmannian and character sheaves},
Selecta Math. (N.S.) {\bf 14} (2009), no. 3-4, 607--628. \MR{2511193}
\bibitem{Fresse-Nishiyama} L. Fresse and K. Nishiyama,
\emph{On the exotic Grassmannian and its nilpotent variety},
Represent. Theory {\bf 20} (2016), 451--481.
\MR{3576071}
\bibitem{Fulton} W. Fulton,
\emph{Young tableaux.
With applications to representation theory and geometry.} London Mathematical Society Student Texts, vol. 35, Cambridge University Press, Cambridge, 1997.
\MR{1464693}
\bibitem{HNOO}
X. He, K. Nishiyama, H. Ochiai, and Y. Oshima,
\emph{On orbits in double flag varieties for symmetric pairs},
Transform. Groups {\bf 18} (2013), no. 4, 1091--1136. \MR{3127988}
\bibitem{Helgason.DG.1978}
S. Helgason, \emph{Differential geometry, {L}ie groups, and symmetric spaces},
Pure and Applied Mathematics, vol. 80, Academic Press Inc., New York, 1978. \MR{514561}
\bibitem{Henderson-Trapa} A. Henderson and P. E. Trapa,
\emph{The exotic Robinson-Schensted correspondence}, J. Algebra {\bf
370} (2012), 32--45. \MR{2966826}
\bibitem{Hinich-Joseph} V. Hinich and A. Joseph, 
{\em Orbital variety closures and the convolution product in Borel--Moore homology},
Selecta Math. (N.S.) {\bf 11} (2005), no. 1, 9--36.
\MR{2179652}
\bibitem{van-Leeuwen} M. van Leeuwen,
\emph{The Robinson-Schensted and Sch\"utzenberger algorithms, an
elementary approach.} The Foata Festschrift. Electron. J. Combin.
{\bf 3} (1996), no. 2, Research Paper 15, approx. 32 pp.
\MR{1392500}
\bibitem{Joseph} A. Joseph, \emph{On the variety of a highest weight module}, J. Algebra {\bf 88} (1984), no. 1, 238--278.
\MR{0741942}
\bibitem{Magyar-Weyman-Zelevinsky} P. Magyar, J. Weyman, and A. Zelevinsky,
\emph{Multiple flag varieties of finite type},
Adv. Math. {\bf 141} (1999), no. 1, 97--118. \MR{1667147}
\bibitem{Nishiyama-Ochiai} K. Nishiyama and H. Ochiai,
\emph{Double flag varieties for a symmetric pair and finiteness of orbits},
J. Lie Theory {\bf 21} (2011), no. 1, 79--99. \MR{2797821}
\bibitem{Ohta} T. Ohta, \emph{The closures of nilpotent orbits in the classical symmetric pairs and their
singularities}, Tohoku Math. J. {\bf 43} (1991), 161--211. \MR{1104427}
\bibitem{Rosso.2012} D. Rosso, \emph{Classic and mirabolic Robinson-Schensted-Knuth correspondence for partial flags},
Canad. J. Math. {\bf 64} (2012), no. 5, 1090--1121. \MR{2979579}
\bibitem{Springer} T.A. Springer, \emph{Linear algebraic groups,} second edition,
Progress in Mathematics, vol. 9, Birkh\"auser Boston, Inc., Boston, MA, 1998.
\MR{1642713}
\bibitem{Steinberg-1968} R. Steinberg,
\emph{Endomorphisms of linear algebraic groups},
Memoirs of the American Mathematical Society, No. 80, American Mathematical Society, Providence, R.I. 1968.
\MR{0230728}
\bibitem{Steinberg-1976} R. Steinberg,
\emph{On the desingularization of the unipotent variety},
Invent. Math. {\bf 36} (1976), 209--224.
\MR{0430094}
\bibitem{Steinberg2} R. Steinberg, \emph{An occurrence of the Robinson-Schensted correspondence}, J. Algebra {\bf 113} (1988), no.~2, 523--528. \MR{0929778}
\bibitem{Travkin} R. Travkin,
\emph{Mirabolic Robinson-Schensted-Knuth correspondence}, Selecta
Math. (N.S.) {\bf 14} (2009), no. 3-4, 727--758. \MR{2511197}
\bibitem{Yamamoto1}
A. Yamamoto, \emph{Orbits in the flag variety and images of the
moment map for $U(p,q)$}, Proc. Japan Acad. Ser. A Math. Sci. {\bf
72} (1996), no. 6, 114--117. \MR{1404485}
\bibitem{Yamamoto2} A. Yamamoto, \emph{Orbits
in the flag variety and images of the moment map for classical
groups. I.}, Represent. Theory {\bf 1} (1997), 329--404.
\MR{1479152}
\end{thebibliography}

\end{document}